\DeclareMathOperator{\Coz}{Coz}
\DeclareMathOperator{\coz}{coz} 
\DeclareMathOperator{\ann}{Ann} 
\DeclareMathOperator{\ma}{Max}
\theoremstyle{plain}
\newtheorem{theorem}{Theorem}[section]
\newtheorem{prop}[theorem] {Proposition}
\newtheorem{lemma}{Lemma}[section]
\newtheorem {corol}{Corollary}[theorem]
\theoremstyle{definition}
\newtheorem{definition}{Definition}
\newtheorem{remark}{\textup{Remark}} 
\newtheorem{example}{\textit{Example}} 
\newtheorem*{acknowledgement}{\textup{Acknowledgement}}
\begin{document}
	
	\title[Maximal ideals in $\mathcal{R}_cL$]%
	{Maximal Ideals in Functions Rings with a Countable Pointfree Image}
	\author[M. Abedi]
	{Mostafa Abedi\\
		Esfarayen University of Technology, Esfarayen, North Khorasan, Iran\\
		Email Address: {ms$_-$abedi@yahoo.com, abedi@esfarayen.ac.ir}}

	

	
	
	
	\begin{abstract}
		Consider the subring $\mathcal{R}_cL$ of continuous real-valued functions defined on a frame $L$, comprising  functions with a countable pointfree image. We present some useful properties of $\mathcal{R}_cL$. We establish that both $\mathcal{R}_cL$ and its bounded part, $\mathcal{R}_c^*L$, are clean rings for any frame $L$.  We show that, for any completely regular frame $L$, the $z_c$-ideals of $\mathcal{R}_cL$ are contractions of the $z$-ideals of $\mathcal{R}L$. This leads to the conclusion  that maximal ideals (or prime $z_c$-ideals) of $\mathcal{R}_cL$   correspond precisely to the contractions of those of $\mathcal{R}L$. 
		We introduce the ${\bf O}_c$- and ${\bf M}_c$-ideals of $\mathcal{R}_cL$. 
		By using ${\bf M}_c$-ideals, we characterize the maximal ideals of $\mathcal{R}_cL$, drawing an analogy with the Gelfand-Kolmogoroff theorem for the maximal ideals of $C_c(X)$. 
		We demonstrate that  fixed maximal ideals of $\mathcal{R}_cL$ have a one-to-one correspondence with the points of $L$  in the case where $L$ is a zero-dimensional frame. We describe the maximal ideals of $\mathcal{R}_c^*L$, leading to  a one-to-one correspondence between these ideals and the points of $\beta L$, the Stone-\v{C}ech compactification of $L$, when $L$ is a strongly zero-dimensional frame. 
		Finally, we establish that $\beta_0L$, the Banaschewski compactification of a zero-dimensional $L$, is isomorphic to the frames of the structure spaces of $\mathcal{R}_cL$, $\mathcal{R}_c(\beta_0L)$, and $\mathcal{R}(\beta_0L)$.
		\\
		
		\textbf{MSC(2010):} {Primary 06D22, 54C30; Secondary 13A15,  54C40, 06B10 }\\ 
		
		{\bf Keywords:} Frame, Countable pointfree image, Maximal ideal, $z_c$-ideal, Zero-dimensional, Clean
	\end{abstract}
	
	\maketitle
	
\section{Introduction}\label{sec:intro}
Consider the ring $C(X)$, comprising  all continuous real-valued  functions on a topological space $X$ (refer to \cite{gl} for more details). The subring $C_c(X)$ of $C(X)$, comprised of functions with a countable image, is an $\mathbb{R}$-subalgebra of $C(X)$.
The ring $C_c(X)$ is introduced and studied in \cite{gkn, gkn1}. The maximal ideals of $C_c(X)$ and its bounded part $C_c^*(X)$ are examined in \cite{akko}. In that paper, the authors characterize the maximal ideals of $C_c(X)$ based on the Gelfand-Kolmogoroff theorem for the maximal ideals of $C(X)$ and provide two representations of the maximal ideals of $C_c^*(X)$. They prove that both $C_c(X)$ and $C_c^*(X)$ are always clean rings for any space $X$. 

Let ${\bf Top}$ designate the category of topological spaces with continuous functions, and let ${\bf Frm}$ designate the category of frames with frame maps. Denote by $\mathfrak{O}X$ the frame of open subsets of a topological space $X$.
Any continuous function $f: X\to Y$ between spaces gives rise to a frame map $\mathfrak{O}f : \mathfrak{O}Y\to\mathfrak{O}X$ with $\mathfrak{O}f(V)=f^{-1}(V)$ for  any open set $V$ of $Y$. 
 For any space $X$, there is a one-one onto map  
\[{\bf Frm}(\mathcal{L}(\mathbb{R}), \mathfrak{O}X))\to{\bf Top(X, \mathbb{R})}
\]
given by correspondence $\alpha\mapsto\tilde{\alpha}$ such that
\[
p<\tilde{\alpha}(x)<q \text{ iff } x\in\alpha(p, q)
\]
whenever $p<q$ in $\mathbb{Q}$ (refer to  \cite{bana} for more details). 
Suppose that $f\in C(X)$ and $r_0\in\mathbb{R}$. As usual, let $Z(f)=\{x\in X \mid f(x)=0\}=f^{-1}(0)$ and $\Coz(f)=X-Z(f)$. Now, we have
\[
r_0\in f[X] \Leftrightarrow Z(f-r_0)\ne\emptyset\Leftrightarrow\Coz(f-r_0)\ne X
\]
So, we can infer that
\[
f\in C_c(X)\Leftrightarrow \text{ the set } \{r\in\mathbb{R} \mid \Coz(f-r)\ne X\} \text{ is countable}.
\]

Consider the ring $\mathcal{R}L$, consisting of continuous real-valued functions defined on a frame $L$. Given any $\alpha\in\mathcal{R}L$, we define $R_{\alpha}=\{ r\in\mathbb{R} \mid  \coz(\alpha-{\bf r})\ne 1\}$, as specified in \cite{ese}. Obviously,  $R_{\alpha}$ is the extension to arbitrary $\mathcal{R}L$ of the familiar correspondence between functions on spaces and their images. Indeed, if the continuous map $g : X\to\mathbb{R}$ corresponds to $\alpha: \mathcal{L}(\mathbb{R}) \to \mathfrak{O}X$ such that $g=\tilde{\alpha}$ as in the previous paragraph, then, for any $r\in\mathbb{R}$, we have 
\[\begin{aligned}
	r \in g[X] &\Leftrightarrow\Coz(g-r)\ne X\\
	      &\Leftrightarrow\{x\in X \mid g(x)<r\}\cup \{x\in X \mid g(x)>r)\}\ne X\\
	      &\Leftrightarrow g^{-1}[(-,r)]\cup g^{-1}[(r,-)]\ne X\\
	      &\Leftrightarrow \alpha(-,r)\cup\alpha(r,-)\ne X \\
	      &\Leftrightarrow \coz(\alpha-{\bf r})\ne X \\
	      &\Leftrightarrow r\in R_{\alpha}.
\end{aligned}\]


We say that an element $\alpha$ of $\mathcal{R}L$ has a \emph{  countable pointfree  image } if $R_\alpha$ forms a countable set. The subring of  $\mathcal{R}L$ consisting of those functions with a  countable pointfree image, denoted by $\mathcal{R}_cL$, can be considered as a pointfree topology version of  $C_c(X)$.  
The ring $\mathcal{R}_cL$ is introduced and studied under a different symbol, $\mathcal{C}_c(L)$,  in \cite{tes}. In that paper, the  authors demonstrate that  $\mathcal{R}_c(\mathfrak{O}X)\cong C_c(X)$ for any topological space $X$.

It is important to note that in \cite{kes}, the authors present and analyze a subring of $\mathcal{R}L$ as  a pointfree topology version of  $C_c(X)$, referring to this ring as $\mathcal{R}_cL$. The version introduced in \cite {tes} is better suited for functions with countable ranges. In fact,  Proposition 3.3 in \cite{ese} shows that the  version introduced in \cite{kes} is a subring of the specified version in \cite{tes}. Throughout, we adopt the version introduced in \cite {tes} and denote it as $\mathcal{R}_cL$.

 Recall that a \emph{zero-dimensional} space is a space whose clopen subset form a base for its topology. In \cite[Theorem 4.6]{gkn}, it is shown that for any space $X$, there exists a zero-dimensional space $Y$
such that $C_c(X)\cong C_c(Y)$. The counterpart of this theorem
 can be found in \cite[Proposition 7.1]{tes}. This proposition states that for any frame $L$, there exists a zero-dimensional frame $M$ such that $\mathcal{R}_cL\cong \mathcal{R}_cM$. Hence, when studying the rings $\mathcal{R}_cL$, one might assume that all frames  are zero-dimensional. However, we choose not to make this assumption because  some results  should be  discussed without assuming zero-dimensionality.

Now, let us provide a summary of the article's content. Section \ref{2} contains definitions, results, and terminology  used throughout the article. 
Some ring-theoretic and frame-theoretic  properties of $\mathcal{R}_cL$, which were previously undescribed, are presented in Section \ref{3}. We observe that a completely regular frame $L$ is  strongly zero-dimensional  iff 
$ \Coz L=\Coz_cL$  iff 
$\beta L\cong \beta_0L$ (Theorem \ref{cs}). We show that both $\mathcal{R}_cL$ and  $\mathcal{R}_c^*L$ are always clean (Theorem \ref{ac}). We show that the  prime $z_c$-ideals and the maximal ideals of $\mathcal{R}_cL$ are the contractions of the corresponding ideals of $\mathcal{R}L$ (Corollaries \ref{pc} and \ref{mc}). Finally, we present some characterizations of $d$-ideals in terms of the cozero elements (Proposition \ref{d} and Theorem \ref{d1}).

In Section \ref{4}, inspired by \cite[Example 4.11]{du0} and the definition of ${\bf O}_c$- and ${\bf M}_c$-ideals of $C_c(X)$, we introduce the ${\bf O}_c$- and ${\bf M}_c$-ideals of $\mathcal{R}_cL$. Initially, we survey  some useful properties of the right adjoint of the join map $\beta_0L\to L$ (Lemma \ref{c1} and Proposition \ref{aa}). Let $I$ be an element in $\beta_0L$. 
It is observed that the ideal ${\bf M}^I_c$ is a $z_c$-ideal, and the ideal ${\bf O}^I_c$ is a $d$-ideal which is generated by a set of idempotents  in $\mathcal{R}_cL$ (Example \ref{mo} and Theorem \ref{ig}). It is shown that ${\bf M}^I_c$ ideals are distinct for distinct elements in $\beta_0L$ (Lemma \ref{i}).

In Section \ref{5}, we provide a comprehensive description of the maximal ideals of $\mathcal{R}_cL$ similar to the maximal ideals of  $C_c(X)$. Theorem \ref{m} presents the counterpart of the Gelfand-Kolmogoroff theorem of $C_c(X)$. Additionally, Proposition \ref{m1} shows the locations of all the ideals of $\mathcal{R}_cL$, which essentially is a $\mathcal{R}_cL$ version of  McKnight's Lemma. This lemma states where all the ideals of $C(X)$ lie (reported in \cite{koh} to have been shown by J. D. McKnight, Jr. in his 1953 unpublished purdue doctoral thesis).


Fixed maximal ideals of $\mathcal{R}_cL$ are examined in Section \ref{6}. It is demonstrated that these ideals have a one-to-one correspondence with prime elements of $L$ (Theorem \ref{ff}). Furthermore, we establish that a zero-dimensional frame $L$ is compact  iff every maximal ideal of $\mathcal{R}_cL$ is  fixed, and this condition is also equivalent to every maximal ideal of $\mathcal{R}_c^*L$ being  fixed.

Section \ref{7} is dedicated to maximal ideals in $\mathcal{R}_c^*L$.  We show that the rings $\mathcal{R}_c(\beta_0L) $ and $\mathcal{R}_c^*L$ are isomorphic when  the quotient map $\bigvee: \beta_0L \to L$ is a $C_c^*$-quotient (Lemma \ref{cs1}). Additionally, we observe that $L$ is not always  a $C_c^*$-quotient of $\beta_0L$ (Proposition \ref{qn}). 
In Proposition \ref{acc}, we establish that absolutely convex ideals of $\mathcal{R}_c^*L$ are  contractions of the corresponding ideals of $\mathcal{R}^*L$. Using this finding, we provide two representations of maximal ideals of $\mathcal{R}^*L$ (Theorem \ref{tw}). 

In the closing section, it is shown that there exists an isomorphism between $\beta_0L$ and the frames of the structure spaces of $\mathcal{R}_cL$, $\mathcal{R}_c(\beta_0L)$, and $\mathcal{R}(\beta_0L)$ (Theorem \ref{b} and Corollary \ref{bb}).
\section{Preliminaries}\label{2}
\subsection{Frames and their Homomorphisms}
For a general framework of frames, refer to \cite{j, pp2012}. Throughout, $L$ stands for a frame.
The  bottom and top elements of $L$ are denoted by $0_L$ and $1_L$ respectively. If $L$ is clear from the context, we drop the subscripts. 

Suppose $a$ and $b$ are elements of a frame $L$. The \emph{relative pseudocomplement} of $a$ with respect to $b$, denoted as $a\longrightarrow b$, is defined as
$a\longrightarrow b=\bigvee\big\{x\in L \mid a\wedge x\leq b\big\}$.  The \emph{  pseudocomplement} of $a$, denoted as $a^*$, is given by 
$a^*=a\longrightarrow0$.
If $a^*\vee b=1$, then $a$ is said to be \emph{  rather below} $b$, denoted as $a\prec b$. Moreover, if there exists a sequence $\{a_q \mid q\in Q\cap[0,1]\}$ such that $a=a_0$, $b=a_1$ and  $a_p\prec a_q$ whenever $p<q$, then $a$ is considered to be \emph{  completely below} $b$, denoted as $a\prec\!\!\prec b$.  
If $a\vee a^*=1$, then $a$ is called a \emph{ complemented} element in $L$. We denote the set of all complemented elements of $L$ by $BL=\{a\in L  \mid  a\vee a^*=1\}$. A frame $L$ is called \emph{regular}, \emph{completely regular}, and \emph{zero-dimensional} if fore every $a\in L$ we have $a=\bigvee\{x\in L : x\prec a\}$, $a=\bigvee\{x\in L : x\prec\!\!\prec a\}$, and  $a=\bigvee \{x\in BL : x\leq a\}$, respectively.  

A frame $L$ is called \emph{  compact} if, whenever
$1=\bigvee T$ for $T\subseteq L$, then $1=\bigvee S$ for some finite subset $S\subseteq T$. 
For each $a\in L$, the sets $\downarrow\!a=\{x\in L  \mid  x\leq a\}$ and $\uparrow\!a=\{x\in L  \mid  a\leq x\}$ form frames. 
An element $1\ne p$ of $L$ is referred to as \emph{point} (or \emph{prime}) if $a\wedge b\leq p$ implies $a\leq p$ or $b\leq p$. We use $Pt(L)$ to denote the  set of all prime elements of $L$. The primes of any regular frame are exactly its maximal elements.

A \emph{frame homomorphism}, also known as a \emph{frame map}, is a mapping between frames that preserves finite meets (including 1) and arbitrary joins (including 0). 
A frame homomorphism $f : L\to M$ is \emph{dense} when $f(x)=0_M$ implies $x=0_L$. The \emph{quotient} of a frame $L$ refer to an onto homomorphism $h  :  L\to M$, where $h$ is known as the  \emph{quotient map}.
A \emph{ compactification} of $L$ is a dense onto homomorphism $h:M\rightarrow L$ with a compact regular domain.

In any bounded distributive lattice $A$, an \emph{  ideal} is a subset $I\subseteq A$ satisfying two conditions: for any finite $J\subseteq I$, their join $\bigvee J $ is in $I$, and if $x\leq y$ with  $y\in I$, then $x$ is also in $I$.
The set $Id(A)$ of all ideals of $A$ forms a frame when it is ordered by inclusion. In this frame, the ideal generated by $\bigcup I_{\lambda}$ as $\bigvee I_{\lambda}$, and $\bigwedge I_{\lambda}=\bigcap I_{\lambda}$.


\subsection{The Stone-\v{C}ech compactification}
An ideal $I$ of a frame $L$ is said to be \emph{  completely regular} if, for any $a\in I$, there exists $b\in I$ such that $a\prec\!\!\prec b$. The frame of completely regular ideals of $L$ is regarded as the Stone-\v{C}ech compactification of $L$, denoted $\beta L$ (refer to \cite{ba}). We use $r$ to represent the right adjoint of the join map $j_L : \beta L \to L$. To recall, $r(a)=\{x\in L \mid x\prec\!\!\prec a\}$ for any $a\in L$.
 

\subsection{The Banaschewski compactification}
For any frame $L$, the frame of all ideals of $BL$ is denoted as $\beta_0L$. It is important to note that $\beta_0L$ is a compact zero-dimensional frame. Notably,  the join map $j_0 : \beta_0L\to L$ is indeed a dense frame homomorphism. Moreover, the join map $j_0:\beta_0L\rightarrow L$ is a
compactification for a frame $L$  iff $L$ is zero-dimensional (refer to \cite{bana111}). The right adjoint of the join map $j_0 : \beta_0L\to L$ is denoted by $r_0$. To recall, $r_{0}(a)=\{x\in BL \mid x\leq a\}=\downarrow\!a\cap BL$ for any $a\in L$.
\subsection{The ring $\mathcal{R}L$ and the cozero part of $L$}
Consider $\mathcal{L}(\mathbb{R})$, denoting the frame of reals, which is isomorphic to the frame $\mathfrak{O}\mathbb{R}$
(refer to \cite{bana}). The ring $\mathcal{R}L$ is comprised of frame homomorphisms $ \mathcal{L}(\mathbb{R})\to L$, and it is a reduced $f$-ring (as discussed \cite{ball2002, bana}).
We adopt the notation from \cite{bana}. We represent the bounded part of $\mathcal{R}L$ as  $\mathcal{R}^*L$, and use ${\bf0}$ and ${\bf1}$ to denote the zero element and the identity element of $\mathcal{R}L$, respectively. For detailed properties of the cozero map $\coz : \mathcal{R}L\to L$, we refer to \cite{ball2002} and \cite {bana}. To recall, for each $\varphi\in\mathcal{R}L$, 
$
\coz\varphi=\varphi(-,0)\vee\varphi(0,-),
$
where
\[
(-,0)=\bigvee\{(p, 0) \mid 0>p\in\mathbb{Q}\}\quad\mbox{ and }\quad(0,-)=\bigvee\{(0, q) \mid 0<q\in\mathbb{Q}\}.
\]
The set $\Coz L=\{\coz\varphi \mid \varphi\in\mathcal{R}L\}$, known as the \emph{  cozero part} of $L$, is a sublattice of $L$ for any frame $L$, and that $L$ is completely regular iff it is generated by $\Coz L$.
\subsection{The ring $\mathcal{R}_{c}L$}
The ring $\mathcal{R}_{c}L=\{\alpha\in\mathcal{R}L \mid R_{\alpha} \mbox{ is countable}  \}$ is a $f$-ring, and we denote its bounded part as $\mathcal{R}_{c}^*(L)$, defined as, $\mathcal{R}_{c}^*(L)=\mathcal{R}_{c}L\cap\mathcal{R}^*L$ (refer to  \cite{tes} for more information). The set
$\Coz _cL=\{\coz\varphi \mid \varphi\in\mathcal{R}_{c}L\}$ is a sublattice of $L$ containing $0$ and $1$, for any frame $L$, and that $L$ is a zero-dimensional frame iff it is generated by $\Coz_c L$ (see \cite{et}.
  It is shown that, in \cite{et}, every element of $\Coz _cL$ is a countable join of complemented elements of $L$, for any frame $L$, and that  $\Coz _cL$ is  closed under countable joints when $L$ is a completely regular frame.
We observe that $BL\subseteq \Coz _cL$ for any frame $L$.
In \cite{a} and  \cite{ae}, the following lemma is presented in various forms.
\begin{lemma}\label{cb}
Suppose $\coz(\varphi)\prec\!\!\prec\coz(\delta)$ for some $\varphi, \delta\in\mathcal{R}_c(L)$. Then there exists an element $\rho\in\mathcal{R}_c(L)$ such that $\varphi=\rho\delta$.	
\end{lemma}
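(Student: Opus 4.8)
The plan is to realise $\rho$ as a \textquotedblleft division of $\varphi$ by $\delta$ on $\coz\delta$, extended by $\mathbf 0$ off $\coz\varphi$\textquotedblright, carried out pointfreely by gluing over two open sublocales, and then to verify that the resulting function has a countable pointfree image. Write $u=\coz\varphi$ and $v=\coz\delta$. Since $u\prec\!\!\prec v$ gives in particular $u\prec v$, i.e. $u^{*}\vee v=1$, the set $\{\,\downarrow\!v,\ \downarrow\!u^{*}\,\}$ covers $L$, with overlap $\downarrow\!(v\wedge u^{*})$. For $\theta\in\mathcal{R}L$ write $\theta|_{v}$ and $\theta|_{u^{*}}$ for the images of $\theta$ under the open frame surjections $x\mapsto x\wedge v$ and $x\mapsto x\wedge u^{*}$; these restrictions are ring homomorphisms commuting with $\coz$. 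As $\coz(\delta|_{v})=v=1_{\downarrow v}$, the element $\delta|_{v}$ is a unit of $\mathcal{R}(\downarrow\!v)$, so I set $\sigma=\varphi|_{v}\cdot(\delta|_{v})^{-1}$; and since $\coz(\varphi|_{u^{*}})=u\wedge u^{*}=0$ we have $\varphi|_{u^{*}}=\mathbf 0$, so on the other piece I set $\sigma'=\mathbf 0$. A further restriction to $\downarrow\!(v\wedge u^{*})$ again annihilates $\varphi$, hence $\sigma$ and $\sigma'$ agree on the overlap.

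Next I would glue. For a cover $1=a\vee b$ of a frame one has the pullback $L\cong\ \downarrow\!a\times_{\downarrow(a\wedge b)}\ \downarrow\!b$, and hence $\mathcal{R}L\cong\mathcal{R}(\downarrow\!a)\times_{\mathcal{R}(\downarrow(a\wedge b))}\mathcal{R}(\downarrow\!b)$; applied to $1=v\vee u^{*}$, the compatible pair $(\sigma,\sigma')$ yields a unique $\rho\in\mathcal{R}L$ with $\rho|_{v}=\sigma$ and $\rho|_{u^{*}}=\mathbf 0$. Comparing on the cover, $(\rho\delta)|_{v}=\sigma\,\delta|_{v}=\varphi|_{v}$ and $(\rho\delta)|_{u^{*}}=\mathbf 0=\varphi|_{u^{*}}$, so $\rho\delta=\varphi$.

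It remains to check $\rho\in\mathcal{R}_{c}L$, which is where the countability of $R_{\varphi}$ and $R_{\delta}$ enters, via three facts. (i) For an open frame surjection $\nu\colon L\to M$ and $\psi\in\mathcal{R}L$, $R_{\psi|_{M}}\subseteq R_{\psi}$, since $\psi|_{M}-\mathbf r=(\psi-\mathbf r)|_{M}$ and $\coz((\psi-\mathbf r)|_{M})=\nu(\coz(\psi-\mathbf r))$; so restriction sends $\mathcal{R}_{c}L$ into $\mathcal{R}_{c}M$. (ii) If $\tau$ is a unit of $\mathcal{R}M$ then $R_{\tau^{-1}}=\{1/r:r\in R_{\tau}\}$, because $\coz\tau^{-1}=\coz\tau=1$ and $\tau^{-1}-\mathbf r=-\mathbf r\,\tau^{-1}(\tau-\mathbf r^{-1})$ for $r\neq0$; so $\mathcal{R}_{c}M$ is closed under inverting its units. (iii) For the cover $1=v\vee u^{*}$ and $\theta\in\mathcal{R}L$, $\coz(\theta-\mathbf r)=\coz((\theta-\mathbf r)|_{v})\vee\coz((\theta-\mathbf r)|_{u^{*}})$, so $r\notin R_{\theta|_{v}}\cup R_{\theta|_{u^{*}}}$ forces $r\notin R_{\theta}$. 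Combining (i) and (ii), $\varphi|_{v}\in\mathcal{R}_{c}(\downarrow\!v)$ and $\delta|_{v}$ is a unit of $\mathcal{R}_{c}(\downarrow\!v)$, so $\sigma\in\mathcal{R}_{c}(\downarrow\!v)$ has countable $R_{\sigma}$; and then (iii) with $\theta=\rho$ gives $R_{\rho}\subseteq R_{\sigma}\cup R_{\mathbf 0}\subseteq R_{\sigma}\cup\{0\}$, a countable set. Hence $\rho\in\mathcal{R}_{c}L$ with $\varphi=\rho\delta$.

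The only genuinely delicate ingredient is the \textquotedblleft sheaf\textquotedblright{} behaviour of $\mathcal{R}$, and then of $\mathcal{R}_{c}$, over a two-element cover --- i.e. the pullback $L\cong\ \downarrow\!a\times_{\downarrow(a\wedge b)}\ \downarrow\!b$ for $1=a\vee b$ --- together with the bookkeeping in (iii) that carries countability through the division and the gluing; the remaining steps are routine manipulations of $\coz$ and of restrictions to open sublocales. If one prefers to stay close to the literature cited just before the lemma, an equivalent route is to first invoke the $\mathcal{R}L$ analogue of the statement (some $\rho\in\mathcal{R}L$ with $\varphi=\rho\delta$, obtained by exactly this local-division argument) and then use (i)--(iii) to see that the witness it produces already lies in $\mathcal{R}_{c}L$.
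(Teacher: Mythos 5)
Your argument is correct, and it is genuinely self-contained, which is more than the paper offers: the paper states Lemma \ref{cb} without proof, merely citing \cite{a} and \cite{ae}, where the statement is obtained by working directly with a witness of the completely below relation (an element $\gamma\in\mathcal{R}_cL$ with $\coz\varphi\wedge\coz\gamma=0$ and $\coz(\gamma-{\bf1})\leq\coz\delta$). Your route is instead the pointfree transcription of the classical ``divide by $\delta$ on $\coz\delta$, extend by zero off $\overline{\coz\varphi}$'' argument: the binary cover $1=\coz\delta\vee(\coz\varphi)^*$, the pullback $L\cong\ \downarrow\!v\times_{\downarrow(v\wedge u^*)}\downarrow\!u^*$, and the fact that $\mathrm{Frm}(\mathcal{L}(\mathbb{R}),-)$ preserves this pullback, so compatible pairs glue uniquely and restrictions jointly detect equality. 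All the auxiliary facts you invoke are standard and available in the paper's references: $\alpha$ is a unit of $\mathcal{R}M$ iff $\coz\alpha=1_M$, $\alpha={\bf0}$ iff $\coz\alpha=0$, restriction along a frame map is a ring homomorphism commuting with $\coz$ and with real constants, and $\mathcal{R}_cM$ is closed under products (cited from \cite{tes}), which is what makes $\sigma=\varphi|_v(\delta|_v)^{-1}$ land in $\mathcal{R}_c(\downarrow\!v)$ once (i) and (ii) give countability of $R_{\varphi|_v}$ and $R_{(\delta|_v)^{-1}}$; note also that $0\notin R_\tau$ for a unit $\tau$, so your inversion formula in (ii) is unproblematic. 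Two features worth flagging: your proof only uses $\coz\varphi\prec\coz\delta$ (rather below), so it actually establishes a slightly stronger statement than the lemma, which is harmless; and the gluing step, which you correctly single out as the delicate point, does need the explicit verification that $z\mapsto(z\wedge v,\,z\wedge u^*)$ is an isomorphism onto the compatible pairs when $v\vee u^*=1$ --- a short computation, but it should be written out or referenced if this proof were to replace the citation. The approach in the cited sources buys economy (no sheaf-theoretic gluing, just a one-line formula built from the $\prec\!\!\prec$ witness), while yours buys transparency and independence from those references.
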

		\section{Some properties of $\mathcal{R}_cL$}\label{3}
		Recall that a frame is defined to be \emph{  strongly zero-dimensional} if its Stone-\v{C}ech compactification is zero-dimensional.
		In \cite{bb}, it is shown that a completely regular frame $L$ is  strongly zero-dimensional if, for any $a,b\in L$ with $a\prec\!\!\prec b$, there exits  $c\in BL$ such that $a\leq c\leq b$.
		
		\begin{theorem} \label{cs}
			The following statements are equivalent for each completely regular frame $L$.
			\begin{enumerate} 
				\item $L$ is a strongly zero-dimensional frame.
				\item $ \Coz L=\Coz_cL$.
				\item  $\beta L\cong \beta_0L$.
			\end{enumerate}
		\end{theorem}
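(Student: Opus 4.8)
The plan is to prove $(3)\Rightarrow(1)$, $(1)\Rightarrow(2)$, $(1)\Rightarrow(3)$ and $(2)\Rightarrow(1)$, which together give all three equivalences. The implication $(3)\Rightarrow(1)$ is immediate: a frame isomorphic to $\beta_0L$ is zero-dimensional, so $\beta L\cong\beta_0L$ forces $\beta L$ to be zero-dimensional, i.e. $L$ to be strongly zero-dimensional. For $(1)\Rightarrow(2)$, the inclusion $\Coz_cL\subseteq\Coz L$ is automatic; for the reverse take $c=\coz\varphi\in\Coz L$ and write $c=\bigvee_{n\ge1}c_n$ with $c_n=\coz\big((|\varphi|-{\bf 1}/n)\vee{\bf 0}\big)$, so that $c_n\prec\!\!\prec c$ for each $n$. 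Using the description of strong zero-dimensionality recalled from \cite{bb}, pick $d_n\in BL$ with $c_n\le d_n\le c$; then $c=\bigvee_n d_n$, and since $BL\subseteq\Coz_cL$ and $\Coz_cL$ is closed under countable joins (here $L$ is completely regular), we get $c\in\Coz_cL$, whence $\Coz L=\Coz_cL$.

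For $(1)\Rightarrow(3)$: if $L$ is strongly zero-dimensional then $\beta L$ is zero-dimensional. The join map $j_L\colon\beta L\to L$ is dense and onto and carries complemented elements to complemented elements (it preserves $\vee,\wedge,0,1$ and $x\vee x^{*}=1$, $x\wedge x^{*}=0$), so writing, for $a\in L$, $a=j_L(r(a))$ and decomposing $r(a)$ over the complemented elements of the zero-dimensional frame $\beta L$ shows $a=\bigvee\{b\in BL : b\le a\}$; thus $L$ is zero-dimensional and $\beta_0L$ is a compactification of $L$. Now $\beta L$ is a \emph{zero-dimensional} compactification of $L$, and $\beta_0L$ is the largest zero-dimensional compactification of a zero-dimensional frame (\cite{bana111}), so $\beta L\le\beta_0L$; on the other hand $\beta L$ is the largest compactification of $L$, so $\beta_0L\le\beta L$. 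Hence $\beta L\cong\beta_0L$ over $L$.

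The remaining, and principal, implication is $(2)\Rightarrow(1)$. Let $a\prec\!\!\prec b$ in $L$. Since $L$ is completely regular, cozero elements interpolate $\prec\!\!\prec$, so there are $c,d\in\Coz L$ with $a\prec\!\!\prec c\prec\!\!\prec d\prec\!\!\prec b$; by $(2)$ we may take $c=\coz\gamma$ and $d=\coz\delta$ with $\gamma,\delta\in\mathcal{R}_cL$. From $\coz\gamma\prec\!\!\prec\coz\delta$ one produces a function $h\in\mathcal{R}_cL$ with ${\bf 0}\le h\le{\bf 1}$, $\coz\gamma\le h(r,-)$ for every rational $r<1$, and $\coz h\le\coz\delta$; this $\mathcal{R}_cL$-valued Urysohn-type separation is of the same family as Lemma \ref{cb} and the cozero-separation lemmas of \cite{a},\cite{ae}. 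As $R_h$ is countable, choose $t\in(0,1)\setminus R_h$; then $h(t,-)$ is complemented, with complement $h(-,t)$, since $h(-,t)\vee h(t,-)=1$ and $h(-,t)\wedge h(t,-)=0$. Finally
\[
a\le c=\coz\gamma\le h(t,-)\le h(0,-)=\coz h\le\coz\delta=d\le b,
\]
so $h(t,-)\in BL$ lies between $a$ and $b$, and $L$ is strongly zero-dimensional by the criterion of \cite{bb}.

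The hard part is the construction of the function $h$ inside $\mathcal{R}_cL$: an arbitrary continuous real function witnessing $\coz\gamma\prec\!\!\prec\coz\delta$ need not have a countable pointfree image, and it is precisely the hypothesis $\Coz L=\Coz_cL$ — which both makes $L$ zero-dimensional and allows the interpolating cozero elements $c,d$ to be realized as cozeros of members of $\mathcal{R}_cL$ — together with the factorization content of Lemma \ref{cb} that keeps the argument inside $\mathcal{R}_cL$ and thereby yields a \emph{single} complemented element sandwiched between $c$ and $d$ rather than merely $c\prec\!\!\prec d$. Everything else (interpolation of $\prec\!\!\prec$ by cozero elements, the dictionary among $BL$, $\Coz_cL$ and $\Coz L$, and the comparison of compactifications) is routine.
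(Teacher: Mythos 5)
Your implications (3)$\Rightarrow$(1), (1)$\Rightarrow$(2) and (1)$\Rightarrow$(3) are fine; the last two even go beyond the paper, which obtains (1)$\Leftrightarrow$(2) only by citing \cite[Proposition 5.5]{du} and proves (1)$\Rightarrow$(3) by exhibiting the isomorphism $I\mapsto\,\downarrow\!I:\beta_0L\to\beta L$ directly, whereas you argue via the universal properties of $\beta L$ and $\beta_0L$. The genuine gap is in (2)$\Rightarrow$(1), precisely at the step you yourself flag as the hard part: the existence of $h\in\mathcal{R}_cL$ with $\mathbf{0}\le h\le\mathbf{1}$, $\coz\gamma\le h(r,-)$ for all rationals $r<1$ and $\coz h\le\coz\delta$ is asserted, not proved, and nothing you invoke delivers it. Lemma \ref{cb} is a division statement ($\gamma=\rho\delta$ for some $\rho\in\mathcal{R}_cL$): the factor $\rho$ is not bounded by $\mathbf 1$, is not ``equal to $1$'' on $\coz\gamma$, and no truncation of it obviously is. What you actually need is $\coz\gamma\prec\!\!\prec_c\coz\delta$ in the sense of \cite{ae}, and the coincidence of $\prec\!\!\prec$ with $\prec\!\!\prec_c$ is only available for compact zero-dimensional frames, as the paper itself recalls. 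Worse, under hypothesis (2) the existence of such an $h$ is equivalent (via your own $t\notin R_h$ trick in one direction, and $h=e_e$ in the other) to the existence of a complemented element between $\coz\gamma$ and $\coz\delta$, i.e.\ to the very separation property characterizing strong zero-dimensionality that you are trying to prove. So at the decisive point the argument assumes its conclusion, and since (2)$\Rightarrow$(1) is the only arrow leaving (2) in your scheme, the cycle of implications does not close.

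A correct way to finish: either do as the paper does and quote \cite[Proposition 5.5]{du} (strong zero-dimensionality is equivalent to every cozero element being a countable join of complemented elements, which together with the description of $\Coz_cL$ as exactly these countable joins gives (1)$\Leftrightarrow$(2)), or argue directly without any $\mathcal{R}_cL$-valued Urysohn function: given $a\prec\!\!\prec b$, take $\varphi\in\mathcal{R}L$ with $\mathbf0\le\varphi\le\mathbf1$, $a\wedge\coz\varphi=0$ and $\coz(\mathbf1-\varphi)\le b$, and consider the two cozero elements $u=\coz\big((\mathbf{1}/\mathbf{2}-\varphi)\vee\mathbf0\big)$ and $v=\coz\big((\varphi-\mathbf{1}/\mathbf{4})\vee\mathbf0\big)$, which satisfy $u\vee v=1$, $a\wedge v=0$ and $u\le b$. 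By (2) each of $u,v$ is a countable join of complemented elements, so $1$ is a countable join of complemented elements each below $u$ or below $v$; disjointifying this cover ($w_n=d_n\wedge d_1^*\wedge\cdots\wedge d_{n-1}^*$) and letting $e$ be the join of the pieces below $u$ gives a complemented $e$ with $a\le e\le b$. Your countable-range device (choosing $t\notin R_h$ so that $h(t,-)$ is complemented, exactly as in Lemma \ref{al}) is the right tool once one already has functions in $\mathcal{R}_cL$, and it works well in your (1)$\Rightarrow$(2); here it cannot start, because hypothesis (2) only puts cozero elements, not any witnessing function, inside the countable-image world.
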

		\begin{proof} 
			The equivalence of (1) and (2) can be inferred from the equivalence of (1) and (2) stated in \cite[Proposition 5.5]{du}.
			Clearly, (3) implies (1) since $\beta_0L$ is a zero-dimensional frame. To show that (1) implies (3), we 	
			define a map $f: \beta_0L \to \beta L$ by  $f(I)=\downarrow\!I=\{ x\in L \mid x\leq a \mbox{ for some } a\in I\}$.
			Clearly, $f(I)\in\beta L$ and $f$ preserves order. Also,  $f(0_{\beta_0 L})=\{0_L\}=0_{\beta L}$ and $f(1_{\beta_0 L})=L=1_{\beta L}$. 
			If  $I$ and $J$ belong to $\beta_0L$ and we take $x\in f(I)\wedge f(J)$, then there are elements $x_1$ in $f(I)$ and $x_2$ in $f(J)$ such that $x=x_1\wedge x_2$. This implies we can find $a\in I$ and $b\in J$ with $x\leq a\wedge b$. This shows that $x\in f(I\wedge J)$.  As a result, $f(I)\wedge f(J)\leq f(I\wedge J)$, and so we have equality. Consequently, $f$  preserves finite meets.
			
			Next, suppose we have a collection $\{I_\lambda\}_{\lambda\in\Lambda}$ consisting of elements of $\beta_0L$, and take $x\in f\big(\bigvee\limits_{\lambda\in\Lambda}I_\lambda\big)$. Then $x\leq a$ with $a\in\bigvee\limits_{\lambda\in\Lambda}I_\lambda$. Thus, there are elements $a_{\lambda_1} \in I_{\lambda_1}, \cdots, a_{\lambda_n} \in I_{\lambda_n}$ such that $x\leq a_{\lambda_1}\vee\cdots\vee a_{\lambda_n}$, 
			implying that $x\in\bigvee\limits_{\lambda\in\Lambda}f(I_\lambda)$. As a result,
			$f\big(\bigvee\limits_{\lambda\in\Lambda}I_\lambda\big)\leq \bigvee\limits_{\lambda\in\Lambda}f(I_\lambda)$, and so we have equality since $f$ preserves order. Therefore,  $f$ preserves joins. Consequently, $f$ is a frame homomorphism. 
			Additionally, it is clear that $f$ is dense, and being one-to-one follows from the fact that $\beta L$ and $\beta_0L$ are zero-dimensional. Finally, we need to show that $f$ is onto. For any $J\in\beta L$, let $I=J\cap BL$. It is evident that $I\in\beta_0L$ and $f(I)\subseteq J$. If $x\in J$, then  since $J$ is a completely regular ideal, there is an element $a\in J$ such that $x\prec\!\!\prec a$. By strong zero-dimensionality, we can find $c\in BL$ such that $x\leq c\leq a$, implying that $x\in\downarrow(J\cap BL)=f(I)$.  Consequently, $J\subseteq f(I)$, which completes the proof.
		\end{proof}
		
		Before presenting our next observation, let us recall that  in a commutative ring with identity, an element is considered \emph{  clean} if it can be expressed as the sum of an invertible element and an idempotent. A ring itself is referred to as clean if all its elements are clean. In view of \cite[Proposition 5.3]{du} and \cite[Proposition 4.6]{et1}, a frame $L$ is strongly zero-dimensional  iff $ \mathcal{R}L$ is clean iff $\mathcal{R}^*L$ is clean. By employing a proof  similar to  those of the previously mentioned results, we can establish that $\mathcal{R}_cL$ is always a clean ring. Unfortunately, this approach does  not apply to proving that $\mathcal{R}_c^*L$ is also a clean ring. Fortunately, the following lemma, which is counterpart of \cite[Proposition 3.2]{et1}, promptly shows that the rings $\mathcal{R}_cL$  and $\mathcal{R}_c^*L$ are always clean for any frame $L$.
		
		Let $a\in BL$. Then, for any $p, q\in\mathbb{Q}$, define
		\[e_a(p,q)=\left \{
		\begin{array}{lll}
			0& \hspace{3mm} \mbox{if $p<q\leq 0$ or $1\leq p<q$}\\[2mm]
			a^*&\hspace{3mm} \mbox{if $p<0<q\leq 1$}\\
			a&\hspace{3mm} \mbox{if $0\leq p< 1< q$}\\
			1&\hspace{3mm} \mbox{if $p<0< 1< q$}.
		\end{array}
		\right.
		\]
		By \cite[8.4]{ball2002}, we have $ e_a\in \mathcal{R}L$ such that $\coz( e_a)=a$, 
		$\coz( e_a-{\bf1})=a^*$, and  $e{_a}^2= e_a$. since $R_{ e_a}\subseteq\{0,1\}$, we conclude that $ e_a$   belongs to $\mathcal{R}_cL$. 
		
		Let us  remind the reader that, for every $r\in\mathbb{R}$,  
		we set
		$$(-, r):= \bigvee \limits_{\substack{q\in \mathbb Q\\q<r }} 
		(-,q) \qquad\text{ and }\qquad	(r,-):= \bigvee \limits_{\substack{p\in \mathbb Q\\r<p }} (p.-).$$
		So, for every $(r,\alpha)\in\mathbb{R}\times\mathcal{R}L$, we have 
		$$\alpha(-, r):= \bigvee \limits_{\substack{q\in \mathbb Q\\q<r }} 
		\alpha(-,q), \qquad\text{ and }\qquad
		\alpha(r,-):= \bigvee \limits_{\substack{p\in \mathbb Q\\r<p }} \alpha(p.-),$$
		which implies $ \alpha(-,r)\wedge\alpha(r,-)=0$.
		
Recall from the remark of \cite[Ch. XIV, 5.3.4]{pp2012} that if $\alpha, \beta\in\mathcal{R}L$, then, for every $p\in\mathbb{Q}$, we have
\[
(\alpha-\beta)(p, -)=\bigvee_{q\in\mathbb{Q}}\alpha(q,-)\wedge\beta(-,q-p) \quad\text{ and }\quad
(\alpha-\beta)(-, p)=\bigvee_{q\in\mathbb{Q}}\alpha(-,q)\wedge\beta(q-p,-).
\]
So, for every $(p, r,\alpha)\in\mathbb{Q}\times\mathbb{R}\times\mathcal{R}L$, we have
\[
(\alpha-{\bf r})(p, -)=\bigvee_{q\in\mathbb{Q}}\alpha(q,-)\wedge{\bf r}(-,q-p)=\bigvee_{\substack{q\in \mathbb Q\\p+r<q }}\alpha(q,-)=\alpha(p+r,-)
\]
and
\[
(\alpha-{\bf r})(-, p)=\bigvee_{q\in\mathbb{Q}}\alpha(-,q)\wedge{\bf r}(q-p,-)=\bigvee_{\substack{q\in \mathbb Q\\q<p+r }}\alpha(-,q)=\alpha(-,p+r).
\]
\begin{lemma}\label{al}
	Suppose 
	$(r,\alpha)\in\mathbb{R}\times\mathcal{R}L$ such that $\coz(\alpha-{\bf r})=1$. Then $ \alpha(-,r)$ is a complemented element in $L$ with $\big(\alpha(-,r)\big)^*=\alpha(r,-)$.
\end{lemma}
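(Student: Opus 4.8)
The plan is to compute $\coz(\alpha-{\bf r})$ explicitly and then run a short lattice argument. Setting $p=0$ in the two displayed identities immediately preceding the statement gives $(\alpha-{\bf r})(-,0)=\alpha(-,r)$ and $(\alpha-{\bf r})(0,-)=\alpha(r,-)$; combining this with the description of the cozero map yields
\[
\coz(\alpha-{\bf r})=(\alpha-{\bf r})(-,0)\vee(\alpha-{\bf r})(0,-)=\alpha(-,r)\vee\alpha(r,-).
\]
Thus the hypothesis $\coz(\alpha-{\bf r})=1$ is exactly the statement $\alpha(-,r)\vee\alpha(r,-)=1$. On the other hand, it has already been recorded that $\alpha(-,r)\wedge\alpha(r,-)=0$. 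So, abbreviating $u=\alpha(-,r)$ and $v=\alpha(r,-)$, the problem reduces to the following purely lattice-theoretic assertion: if $u,v\in L$ satisfy $u\vee v=1$ and $u\wedge v=0$, then $u^*=v$ and $u\vee u^*=1$.

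For that assertion I would argue as follows. Since $u\wedge v=0$, the definition of the pseudocomplement gives $v\leq u^*$. For the reverse inequality I use the frame distributive law together with $u\vee v=1$ and $u\wedge u^*=0$:
\[
u^*=u^*\wedge 1=u^*\wedge(u\vee v)=(u^*\wedge u)\vee(u^*\wedge v)=u^*\wedge v\leq v.
\]
Hence $u^*=v$, and therefore $u\vee u^*=u\vee v=1$, so $u=\alpha(-,r)$ is a complemented element of $L$ with $\big(\alpha(-,r)\big)^*=\alpha(r,-)$, as claimed.

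I do not expect a genuine obstacle here: the only step needing attention is the translation of the ring-theoretic hypothesis $\coz(\alpha-{\bf r})=1$ into the lattice equation $\alpha(-,r)\vee\alpha(r,-)=1$, which is handled by the formulas for $(\alpha-{\bf r})(p,-)$ and $(\alpha-{\bf r})(-,p)$ recalled just before the lemma. After that the conclusion is a two-line computation with pseudocomplements in the distributive lattice $L$, valid in every frame with no regularity or zero-dimensionality hypotheses.
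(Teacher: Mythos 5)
Your proposal is correct and follows essentially the same route as the paper: translating $\coz(\alpha-{\bf r})=1$ into $\alpha(-,r)\vee\alpha(r,-)=1$ via the displayed formulas, combining it with $\alpha(-,r)\wedge\alpha(r,-)=0$, and concluding complementedness. The only difference is that you spell out the short distributivity argument showing a complement is the pseudocomplement, which the paper takes as standard.
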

\begin{proof}
	Since $\coz(\alpha-{\bf r})=(\alpha-{\bf r})(-,0)\vee(\alpha-{\bf r})(0,-)=\alpha(-,r)\vee\alpha(r,-)$,  we have 	$\alpha(-,r)\vee\alpha(r,-)=1$ by hypothesis. On the other hand, $ \alpha(-,r)\wedge\alpha(r,-)=0$ is generally true. Hence, $\alpha(-,r)\vee\alpha(r,-)=1$ and $ \alpha(-,r)\wedge\alpha(r,-)=0,$
	meaning that $c= \alpha(-,r)$ is a complemented element in $L$ with $c^*=\alpha(r,-)$.  
\end{proof}

In the proof of the next lemma, we shall use the following facts:
		
		{\bf A:} \cite[Ch. III, 3.1.3]{pp2012} If $a$ and $b$ are two elements of a frame $L$, then $a\wedge (a\longrightarrow b)=a\wedge b$. 
		
		{\bf B:} \cite[ Lemma 3.1]{et1} If  $a\in BL$ and $\alpha\in\mathcal{R}L$, then the map $\varphi : \mathcal{L}(\mathbb{R})\to L$, given by 
		\[
			\varphi(p,q)=[a\vee\alpha(p,q)]\wedge[a\longrightarrow\alpha(p+1,q+1)]    
			\]
	for every $p,q\in\mathbb{Q}$, is a frame map.
	
	{\bf C:} \cite[Ch. XIV, 5.3.4]{pp2012} If $\alpha, \beta\in\mathcal{R}L$, then, for every $p\in\mathbb{Q}$, we have
	\[
			(\alpha+\beta)(p, -)=\bigvee_{q\in\mathbb{Q}}\alpha(q,-)\wedge\beta(p-q,-)
			\quad	\text{ and } \quad
			(\alpha+\beta)(-, p)=\bigvee_{q\in\mathbb{Q}}\alpha(-,q)\wedge\beta(-,p-q).
\]
	
			
		\begin{lemma}
			Suppose 
			$\alpha\in\mathcal{R}L$ such that $\coz(\alpha-{\bf r})=1$, where $0<r<\frac{1}{2}$. Then,  $\alpha$ is clean.
		\end{lemma}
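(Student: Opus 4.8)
The plan is to realize $\alpha$ as $\alpha=\varphi+e_c$, where $c$ is a suitable complemented element of $L$, $e_c$ is the idempotent of $\mathcal{R}_cL$ attached to $c$ as defined above, and $\varphi$ is a unit of $\mathcal{R}L$; the role of the hypothesis is that $\coz(\alpha-{\bf r})=1$ splits $L$ into a part on which $\alpha$ lies below $r$ and a part on which it lies above $r$, so that subtracting the ``characteristic function'' $e_c$ of the first part leaves a function bounded away from $0$ throughout, hence invertible. Concretely, set $c:=\alpha(-,r)$. By Lemma~\ref{al}, $c\in BL$ with $c^*=\alpha(r,-)$, so $e_c\in\mathcal{R}_cL$ is defined and satisfies $\coz e_c=c$, $\coz(e_c-{\bf1})=c^*$ and $e_c^2=e_c$. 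Applying Fact~B with $a=c$ yields a frame map $\varphi\in\mathcal{R}L$ with $\varphi(p,q)=[c\vee\alpha(p,q)]\wedge[c\longrightarrow\alpha(p+1,q+1)]$ for all $p,q\in\mathbb{Q}$. I claim $\varphi$ is a unit and $\varphi+e_c=\alpha$; this suffices, since, as is well known, $\psi\in\mathcal{R}L$ is a unit precisely when $\coz\psi=1$.

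To prove $\coz\varphi=\varphi(-,0)\vee\varphi(0,-)=1$, I would compute $\varphi(-,0)$ and $\varphi(0,-)$ after meeting with $c$ and with $c^*$ separately, which recovers them since $c\vee c^*=1$ and $c\wedge c^*=0$. Meeting $\varphi(p,0)$ with $c$ and invoking Fact~A collapses it to $c\wedge\alpha(p+1,1)$, and likewise $c\wedge\varphi(0,q)=c\wedge\alpha(1,q+1)$; passing to joins over $p<0$ and over $q>0$ and using the inclusions $c=\alpha(-,r)\le\alpha(-,1)$ and $\alpha(1,-)\le\alpha(r,-)=c^*$ (valid since $0<r<1$) gives $c\wedge\varphi(-,0)=c$ and $c\wedge\varphi(0,-)=0$. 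Meeting instead with $c^*$, and using that $c^*\le c\longrightarrow b$ for every $b$, the pertinent terms collapse to $c^*\wedge\alpha(p,0)$ and $c^*\wedge\alpha(0,q)$; passing to joins and using $\alpha(-,0)\le\alpha(-,r)=c$ and $c^*=\alpha(r,-)\le\alpha(0,-)$ gives $c^*\wedge\varphi(-,0)=0$ and $c^*\wedge\varphi(0,-)=c^*$. Hence $\varphi(-,0)=c$, $\varphi(0,-)=c^*$, and $\coz\varphi=c\vee c^*=1$, so $\varphi$ is a unit.

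To prove $\varphi+e_c=\alpha$, it is enough to check that the two sides agree on every generator $(p,-)$ and $(-,p)$, $p\in\mathbb{Q}$. From the piecewise definition of $e_c$ one reads off $e_c(s,-)=1,\,c,\,0$ according as $s<0$, $0\le s<1$, $s\ge 1$, and dually for $e_c(-,s)$; also the formula for $\varphi$ simplifies, using $c\in BL$ (so $c\longrightarrow b=c^*\vee b$), to $\varphi(p,-)=(c^*\wedge\alpha(p,-))\vee\alpha(p+1,-)$. Substituting into the addition formula of Fact~C gives $(\varphi+e_c)(p,-)=\varphi(p,-)\vee\bigl(c\wedge\varphi(p-1,-)\bigr)$, and a short manipulation using $c\wedge c^*=0$, $c\vee c^*=1$ and $\alpha(p+1,-)\le\alpha(p,-)$ reduces the right-hand side to $\alpha(p,-)$; the generators $(-,p)$ are handled symmetrically. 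Therefore $\alpha=\varphi+e_c$ is a sum of a unit and an idempotent, i.e.\ $\alpha$ is clean.

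The only genuinely delicate points are the two cozero computations: one must extract the values of $e_c$ on the generators correctly from its piecewise definition, must check that the joins over $p<0$ (resp.\ over $q>0$, and in the addition formula) commute with the meets that occur --- which holds because the families involved are monotone, hence directed --- and must keep track of which of the four inclusions above is used where; the rest is bookkeeping. I would also note that only $0<r<1$ is actually used for cleanness in $\mathcal{R}L$: the sharper assumption $0<r<\frac12$ is what is needed to handle the bounded ring $\mathcal{R}_c^*L$, since it guarantees $|\varphi|\ge r$ throughout, so that when $\alpha\in\mathcal{R}_c^*L$ the function $\varphi$ is a unit there as well, which is exactly what makes $\mathcal{R}_c^*L$ clean.
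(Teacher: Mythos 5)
Your proposal is correct and follows essentially the same route as the paper: the identical decomposition $\alpha=e_c+\varphi$ with $c=\alpha(-,r)$, the same auxiliary facts, and the same addition computation on the generators. The only (harmless) differences are that you certify $\varphi$ is invertible via $\coz\varphi=c\vee c^{*}=1$, whereas the paper proves the stronger fact $\varphi(-,-r)\vee\varphi(r,-)=1$ (which is what it later needs, and what your closing remark about $r<\tfrac12$ and $\mathcal{R}_c^{*}L$ correctly identifies), and that you get equality on the rays directly instead of the paper's inequality-plus-regularity argument.
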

		\begin{proof} By Lemma \ref{al},
		 $c= \alpha(-,r)$ is a complemented element in $L$ with $c^*=\alpha(r,-)$. Let us define 
		\[
			\varphi(p,q)=[c\vee\alpha(p,q)]\wedge[c\longrightarrow\alpha(p+1,q+1)]   
			\]
			for every $p,q\in\mathbb{Q}$.       Then, by {\bf B}, we have $\varphi\in \mathcal{R}L$. Now, since
			\[\begin{aligned}
				\varphi(r,-)&=[c\vee\alpha(r,-)]\wedge[c\longrightarrow\alpha(r+1,-)]\\&=(c\vee c^*)\wedge[c\longrightarrow\alpha(r+1,-)]\\&=1\wedge[c\longrightarrow\alpha(r+1,-)]\\
			&=c\longrightarrow\alpha(r+1,-)
			\end{aligned}\]
			and
			\[\begin{aligned}
				\varphi(-,-r)&=[c\vee\alpha(-,-r)]\wedge[c\longrightarrow\alpha(-,-r+1)]\\
				&=c\wedge[c\longrightarrow\alpha(-,-r+1)]\qquad {since} \qquad\alpha(-,-r) \leq c\\
				&=c\wedge \alpha(-,-r+1) \qquad\quad {by\quad{\bf A}} \\
				&=c,\qquad\quad {since} \qquad c\leq\alpha(-,-r+1)
			\end{aligned}	\]
			we have 
			\[\begin{aligned}
				\varphi(-,-r)\vee\varphi(r,-)&=c\vee [c\longrightarrow\alpha(r+1,-)]\\
				&\geq c\vee c^*\\&=1 \qquad {since} \qquad c^*\leq c\longrightarrow\alpha(r+1,-).
			\end{aligned}\]
			Therefore,  $\varphi$ is a unit element in $  \mathcal{R}L$. Now, by {\bf C}, for every $p\in\mathbb{Q}$, we have
			\[
			\begin{aligned}
				(e_c+\varphi)(p, -)&=\bigvee_{q\in\mathbb{Q}}e_c(q,-)\wedge\varphi(p-q,-)\\
				&=\bigvee_{q<0}1\wedge\varphi(p-q,-)\vee\bigvee_{0\leq q<1}c\wedge\varphi(p-q,-)\\
				&\vee\bigvee_{1\leq q\in\mathbb{Q}}0\wedge\varphi(p-q,-)\\
				&=\varphi(p,-)\vee\big(c\wedge\varphi(p-1,-)\big)\\
				&=\Bigg(\big(c\vee\alpha(p,-)\big)\wedge\big(c\longrightarrow\alpha(p+1, -)\big)\Bigg)\vee\\
				&\quad \Bigg(c\wedge(c\vee\alpha(p-1,-))\wedge(c\longrightarrow\alpha(p,-))\Bigg)\\
				&=\Bigg(\bigg(c\wedge\alpha(p+1, -)\bigg)\vee\bigg(\alpha(p,-)\wedge\big(c\longrightarrow\alpha(p+1, -)\big)\bigg)\Bigg)\vee\Bigg(c\wedge\alpha(p,-)\Bigg)\\
				& \leq \alpha(p+1,-)\vee \alpha(p,-)
			=\alpha(p,-)
			\end{aligned}
			\]
			and
			\[
			\begin{aligned}
				(e_c+\varphi)(-, p)&=\bigvee_{q\in\mathbb{Q}}e_c(-,q)\wedge\varphi(-,p-q)\\
				&=\bigvee_{q>1}1\wedge\varphi(-,p-q)\vee\bigvee_{0<q\leq1}c^*\wedge\varphi(-,p-q)\vee\bigvee_{0\geq q \in\mathbb{Q}}0\wedge\varphi(-,p-q)\\
				&=\varphi(-,p-1)\vee\big(c^*\wedge\varphi(-,p)\big)\\
				&=\Bigg(\big(c\vee\alpha(-,p-1)\big)\wedge\big(c\longrightarrow\alpha(-, p)\big)\Bigg)\vee\\
				&\quad \Bigg(c^*\wedge\bigg(\big(c\vee\alpha(-,p)\big)\wedge\big(c\longrightarrow\alpha(-,p+1)\bigg)\Bigg)\\
				&=\Bigg(\bigg(c\wedge\alpha(-, p)\bigg)\vee\bigg(\alpha(-,p-1)\wedge\big(c\longrightarrow\alpha(-, p)\big)\bigg)\Bigg)\vee\\
				&\qquad\Bigg((c^*\wedge c)\vee \big(c^*\wedge\alpha(-,p)\big)\wedge\big(c\longrightarrow\alpha(-,p+1)\big)\Bigg)\\
				& \leq \alpha(-,p)\vee\alpha(-,p-1)\vee \alpha(-,p)
				=\alpha(-,p).
			\end{aligned}
			\]
			Thus,  we can conclude that  $(e_c+\varphi)(p, q)\leq\alpha(p,q)$ for every $p,q\in\mathbb{Q}$. This implies that $\alpha=e_c+\varphi$ since $\mathcal{L}(\mathbb{R})$ is regular. Consequently, $\alpha$ is clean.
		\end{proof}
		
		Recall from \cite[Lemma 3.2]{g} that 
		 $\delta\in\mathcal{R}_c^*L$ is a unit of $\mathcal{R}_c^*L$  iff there exists $0<p\in\mathbb{Q}$ such that $\delta(-.-p)\vee\delta(p,-)=1$. In the proof of the previous lemma, it is evident that if $\alpha\in\mathcal{R}_cL$ ($\alpha\in\mathcal{R}_c^*L$), then $\varphi$ is a unit in  $\mathcal{R}_cL$ ($\mathcal{R}_c^*L$). Using this observation, the next theorem becomes self-evident. 
		\begin{theorem}\label{ac}
			The rings $\mathcal{R}_cL$  and $\mathcal{R}_c^*L$ are always clean for any frame $L$. 
		\end{theorem}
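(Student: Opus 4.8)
The plan is to combine the preceding lemma with a simple counting argument. That lemma shows that any $\alpha\in\mathcal{R}L$ with $\coz(\alpha-{\bf r})=1$ for some real $r$ with $0<r<\frac{1}{2}$ is clean, via the explicit decomposition $\alpha=e_c+\varphi$ in which $c=\alpha(-,r)$ is complemented (Lemma~\ref{al}), $e_c$ is idempotent, and $\varphi$ is a unit of $\mathcal{R}L$. The obstruction to deducing cleanness of all of $\mathcal{R}L$ from this is precisely the need for such an $r$; for a member of $\mathcal{R}_cL$ this requirement is met for free, because $R_\alpha$ is countable while the interval $(0,\frac{1}{2})$ is not.

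Concretely, given an arbitrary $\alpha\in\mathcal{R}_cL$, the set $(0,\frac{1}{2})\setminus R_\alpha$ is nonempty (indeed uncountable), so I would fix any $r$ in it. By the equivalence $r\in R_\alpha\Leftrightarrow\coz(\alpha-{\bf r})\ne1$ recorded in the Introduction, we get $\coz(\alpha-{\bf r})=1$, and the preceding lemma yields $\alpha=e_c+\varphi$ as above. Since $c\in BL$ we have $R_{e_c}\subseteq\{0,1\}$, whence $e_c\in\mathcal{R}_cL$ and therefore $\varphi=\alpha-e_c\in\mathcal{R}_cL$ as well; moreover $\varphi$ is a unit of $\mathcal{R}_cL$ by the remark immediately preceding the statement, in which the formula defining $\varphi$ is inspected. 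Thus $\alpha$ is a sum of an idempotent and a unit of $\mathcal{R}_cL$, i.e.\ $\alpha$ is clean there.

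For $\mathcal{R}_c^*L$ I would run the identical argument on an arbitrary $\alpha\in\mathcal{R}_c^*L\subseteq\mathcal{R}_cL$, again producing $\alpha=e_c+\varphi$. Here ${\bf 0}\leq e_c\leq{\bf 1}$, so $e_c\in\mathcal{R}_c^*L$ and hence $\varphi=\alpha-e_c\in\mathcal{R}_c^*L$; and $\varphi$ is a unit of $\mathcal{R}_c^*L$, either by the same remark or directly via \cite[Lemma~3.2]{g}, since the computation in the proof of the preceding lemma gives $\varphi(-,-p)\vee\varphi(p,-)=1$ for every rational $p$ with $0<p<r$. The only step needing any care here — and the closest thing to an obstacle — is confirming that the unit $\varphi$ furnished by the lemma's formula actually lies in $\mathcal{R}_cL$ (respectively $\mathcal{R}_c^*L$), which is exactly the content of the remark preceding the theorem; everything else is a direct appeal to the lemma together with the cardinality observation.
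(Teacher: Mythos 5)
Your proposal is correct and follows essentially the same route as the paper: the paper likewise obtains cleanness by picking $r\in(0,\tfrac12)\setminus R_\alpha$ (possible by countability of $R_\alpha$, so $\coz(\alpha-{\bf r})=1$), invoking the preceding lemma's decomposition $\alpha=e_c+\varphi$, and noting via the remark before the theorem (and \cite[Lemma 3.2]{g}) that $\varphi$ is a unit of $\mathcal{R}_cL$, respectively $\mathcal{R}_c^*L$. You merely make explicit the cardinality step and the membership of $e_c$ and $\varphi$ in the subrings, which the paper treats as self-evident.
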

		
		An ideal $A$ of a commutative ring $R$ with $1$ is called a \emph{  $z$-ideal }if whenever two elements of $R$ are in the same set of maximal ideals and $A$ contains one of the elements, then it also contains the other (refer to \cite{mason}).
		Recall that an ideal $Q$ of $\mathcal{R}_cL$ is termed a \emph{  $z_c$-ideal} if, for any $\alpha\in\mathcal{R}_cL$ and $\beta\in Q$, the condition $\coz\alpha=\coz\beta$ implies $\alpha\in Q$ (see \cite{tes}).  
		It should be noted that an ideal $Q$ in $\mathcal{R}_cL$ is a $z_c$-ideal  iff it is a $z$-ideal (see \cite[Proposition 4.1]{a}). For any completely regular frame $L$, an ideal $H$ of $\mathcal{R}L$ is a $z$-ideal if, for any $\alpha\in\mathcal{R}L$ and $\beta\in H$, the condition $\coz\alpha=\coz\beta$ implies $\alpha\in H$ (see \cite{du}).
			Recall that if $S$ is a subring of a ring $R$ and $A$ is  an ideal of $R$, then $A\cap S$ forms an ideal of $S$ known as the \emph{  contraction} of $A$, denoted by $A^c$. 
		\begin {prop}
			For any completely regular frame $L$, an ideal $Q$ in $\mathcal{R}_cL$ is a $z_c$-ideal  iff it is a contraction of a $z$-ideal of $\mathcal{R}L$.
		\end {prop}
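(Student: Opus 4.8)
The plan is to prove the two implications separately. The backward implication is a routine contraction argument; the forward one rests on an explicit construction of a $z$-ideal of $\mathcal{R}L$ together with the defining property of $z_c$-ideals.

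\emph{Easy direction.} Suppose $Q=H\cap\mathcal{R}_cL$ for some $z$-ideal $H$ of $\mathcal{R}L$. Then $Q$ is an ideal of $\mathcal{R}_cL$, being a contraction. To see that it is a $z_c$-ideal, take $\alpha\in\mathcal{R}_cL$ and $\beta\in Q$ with $\coz\alpha=\coz\beta$. Since $\beta\in H$ and $H$ is a $z$-ideal of $\mathcal{R}L$, we get $\alpha\in H$; and as $\alpha\in\mathcal{R}_cL$, this gives $\alpha\in H\cap\mathcal{R}_cL=Q$.

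\emph{Main direction.} Given a $z_c$-ideal $Q$ of $\mathcal{R}_cL$, I would put
\[
H=\bigl\{\alpha\in\mathcal{R}L \mid \coz\alpha\leq\coz\beta \text{ for some }\beta\in Q\bigr\}.
\]
First I would verify that $H$ is an ideal of $\mathcal{R}L$: if $\gamma\in\mathcal{R}L$ and $\alpha\in H$ (say $\coz\alpha\le\coz\beta$ with $\beta\in Q$), then $\coz(\gamma\alpha)=\coz\gamma\wedge\coz\alpha\leq\coz\beta$, so $\gamma\alpha\in H$; and if $\alpha_1,\alpha_2\in H$ with $\coz\alpha_i\leq\coz\beta_i$ ($\beta_i\in Q$), then, using the standard $f$-ring identity $\coz\beta_1\vee\coz\beta_2=\coz(\beta_1^2+\beta_2^2)$ together with $\beta_1^2+\beta_2^2\in Q$, we get $\coz(\alpha_1+\alpha_2)\leq\coz\alpha_1\vee\coz\alpha_2\leq\coz(\beta_1^2+\beta_2^2)$, so $\alpha_1+\alpha_2\in H$. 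That $H$ is a $z$-ideal of $\mathcal{R}L$ is immediate from its definition: if $\alpha'\in\mathcal{R}L$, $\alpha\in H$ and $\coz\alpha'=\coz\alpha$, then $\coz\alpha'\le\coz\beta$ for the same $\beta$, so $\alpha'\in H$. It remains to identify $Q$ with $H\cap\mathcal{R}_cL$. The inclusion $Q\subseteq H\cap\mathcal{R}_cL$ is clear (take $\beta=\alpha$ in the definition of $H$). For the converse, let $\alpha\in\mathcal{R}_cL$ with $\coz\alpha\leq\coz\beta$ for some $\beta\in Q$; then $\alpha\beta\in Q$ because $Q$ is an ideal of $\mathcal{R}_cL$, and $\coz(\alpha\beta)=\coz\alpha\wedge\coz\beta=\coz\alpha$. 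Since $Q$ is a $z_c$-ideal, applying its defining property to $\alpha\in\mathcal{R}_cL$ and $\alpha\beta\in Q$ with $\coz\alpha=\coz(\alpha\beta)$ yields $\alpha\in Q$, as desired.

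I expect no serious obstacle: the only tools are that $\coz$ is monotone and sends products to meets, plus the identity $\coz\varphi\vee\coz\psi=\coz(\varphi^2+\psi^2)$, all available from the Preliminaries; complete regularity of $L$ enters only through the notion of $z$-ideal of $\mathcal{R}L$ being set up there. One could add the remark that the constructed $H$ is in fact the smallest $z$-ideal of $\mathcal{R}L$ containing $Q$: if $H'\supseteq Q$ is a $z$-ideal of $\mathcal{R}L$ and $\alpha\in H$ with $\coz\alpha\leq\coz\beta$, $\beta\in Q\subseteq H'$, then $\alpha\beta\in H'$ and $\coz(\alpha\beta)=\coz\alpha$, whence $\alpha\in H'$.
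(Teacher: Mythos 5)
Your proof is correct and follows essentially the same route as the paper: the same auxiliary $z$-ideal $H=\{\alpha\in\mathcal{R}L \mid \coz\alpha\leq\coz\beta \text{ for some }\beta\in Q\}$, and the same key step $\coz(\alpha\beta)=\coz\alpha$ with the $z_c$-property of $Q$ to get $H\cap\mathcal{R}_cL\subseteq Q$. The extra verifications (that $H$ is an ideal, and the minimality remark) only flesh out what the paper leaves as "easy to verify."
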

		\begin{proof}
			If $Q=H^c$, where $H$ is a $z$-ideal in $\mathcal{R}L$, then $Q$ is obviously a $z_c$-ideal in $\mathcal{R}_cL$. Conversely, let $Q$ be a $z_c$-ideal of $\mathcal{R}_cL$. Take
			$H=\{\delta\in\mathcal{R}L \mid \coz\delta\leq\coz\alpha \mbox{ for some } \alpha\in Q\}$.
			It is easy to verify that $H$ is a $z$-ideal in $\mathcal{R}L$, and $Q\subseteq H^c$. On the other hand, if $\delta\in H^c$, then there exists an $\alpha\in Q$ such that $\coz\delta\leq\coz\alpha$. 
			Since $Q$ is a $z_c$-ideal, $\alpha\delta\in Q$, and $\coz\alpha\delta=\coz\delta$, we can conclude that $\delta\in Q$, and thus the proof is complete.
		\end{proof}
		\begin {corol}\label{pc}
			For any completely regular frame $L$, an ideal in $\mathcal{R}_cL$ is a prime $z_c$-ideal  iff it is a contraction of a prime $z$-ideal of $\mathcal{R}L$.
		\end {corol}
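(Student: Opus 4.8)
The plan is to read the corollary off the preceding Proposition. One implication is immediate: if $H$ is a prime $z$-ideal of $\mathcal{R}L$, then $H^c$ is a $z_c$-ideal of $\mathcal{R}_cL$ by the trivial half of that Proposition, it is proper since ${\bf1}\notin H$, and it is prime because the contraction of a prime ideal along the inclusion $\mathcal{R}_cL\hookrightarrow\mathcal{R}L$ is again prime (from $\alpha\beta\in H^c$ with $\alpha,\beta\in\mathcal{R}_cL$ one gets $\alpha\in H$ or $\beta\in H$, hence $\alpha\in H^c$ or $\beta\in H^c$). So all the work is in the converse: given a prime $z_c$-ideal $Q$ of $\mathcal{R}_cL$, I must produce a \emph{prime} $z$-ideal $H$ of $\mathcal{R}L$ with $H^c=Q$. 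The $z$-ideal $H_0=\{\delta\in\mathcal{R}L\mid \coz\delta\le\coz\alpha \mbox{ for some } \alpha\in Q\}$ built in the proof of the Proposition already satisfies $Q\subseteq H_0$ and $H_0^c=Q$, but there is no reason for it to be prime, so the first step is to enlarge it.

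The enlargement is the classical one. Put $T=\mathcal{R}_cL\setminus Q$; since $Q$ is prime, $T$ is multiplicatively closed in $\mathcal{R}L$ and contains ${\bf1}$, and the equality $H_0^c=Q$ says precisely that $H_0\cap T=\emptyset$. A union of a chain of $z$-ideals of $\mathcal{R}L$ is again a $z$-ideal, and disjointness from $T$ passes to such unions, so Zorn's Lemma provides a $z$-ideal $H$ of $\mathcal{R}L$ with $H_0\subseteq H$ that is maximal with respect to $H\cap T=\emptyset$. Then $Q\subseteq H_0\subseteq H$ gives $Q\subseteq H^c$, $H\cap T=\emptyset$ gives $H^c\subseteq Q$, so $H^c=Q$, and ${\bf1}\in T$ makes $H$ proper. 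What remains is to verify that $H$ is prime, and this is where I expect the real work to be.

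To prove primeness I would first record the description of the smallest $z$-ideal $\langle P,a\rangle$ of $\mathcal{R}L$ containing a $z$-ideal $P$ together with an element $a$, namely $\langle P,a\rangle=\{\delta\in\mathcal{R}L\mid\coz\delta\le\coz p\vee\coz a \mbox{ for some } p\in P\}$; this is a routine check using $\coz(\varphi\psi)=\coz\varphi\wedge\coz\psi$, $\coz(\varphi+\psi)\le\coz\varphi\vee\coz\psi$, and $\coz(\varphi^2+\psi^2)=\coz\varphi\vee\coz\psi$. Now suppose $ab\in H$ with $a\notin H$ and $b\notin H$. By maximality of $H$, both $\langle H,a\rangle$ and $\langle H,b\rangle$ meet $T$, so there are $s_1,s_2\in T$ and $p_1,p_2\in H$ with $\coz s_1\le\coz p_1\vee\coz a$ and $\coz s_2\le\coz p_2\vee\coz b$. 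The distributive law in the frame $L$ then yields
\[
\coz(s_1s_2)=\coz s_1\wedge\coz s_2\le(\coz p_1\vee\coz a)\wedge(\coz p_2\vee\coz b)\le\coz p_1\vee\coz p_2\vee\coz(ab)=\coz q,
\]
where $q=p_1^2+p_2^2+(ab)^2\in H$. Since $H$ is a $z$-ideal and $\coz(qs_1s_2)=\coz(s_1s_2)$, this forces $s_1s_2\in H$, contradicting $H\cap T=\emptyset$; hence $H$ is prime. The main obstacle is exactly this last argument, that a $z$-ideal maximal among those missing $T$ is prime, since it uses the explicit form of $\langle P,a\rangle$ and the cozero distributive computation rather than merely the ring-theoretic prime-avoidance trick; everything else is bookkeeping carried over from the Proposition, and complete regularity of $L$ enters only through it. As an alternative route that sidesteps Zorn, one can observe that $H\mapsto\{\coz\delta\mid\delta\in H\}$ is a bijection between prime $z$-ideals of $\mathcal{R}L$ and prime ideals of the distributive lattice $\Coz L$, and likewise for $\mathcal{R}_cL$ and $\Coz_cL$, compatibly with the sublattice inclusion $\Coz_cL\subseteq\Coz L$, so the statement reduces to the standard fact that every prime ideal of a sublattice of a distributive lattice is the restriction of a prime ideal of the ambient lattice.
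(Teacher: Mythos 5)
Your argument is correct, and it shares the paper's skeleton (use the preceding Proposition to get a $z$-ideal $H_0$ with $H_0^c=Q$, view the complement of $Q$ as a multiplicatively closed set in $\mathcal{R}L$, then saturate by a Zorn-type argument), but it diverges at the decisive step. The paper simply takes a prime ideal of $\mathcal{R}L$ minimal over $H_0$ and disjoint from the multiplicative set, and then invokes Mason's Theorem 1.1 to conclude that a prime minimal over a $z$-ideal is itself a $z$-ideal; primeness is automatic and the $z$-property is outsourced to the citation. You instead run Zorn inside the class of $z$-ideals containing $H_0$ and missing $T$, so the $z$-property is automatic, and you prove primeness by hand via the explicit description $\langle H,a\rangle=\{\delta\mid\coz\delta\le\coz p\vee\coz a,\ p\in H\}$ of the generated $z$-ideal together with the cozero identities $\coz(\varphi\psi)=\coz\varphi\wedge\coz\psi$ and $\coz(\varphi^2+\psi^2)=\coz\varphi\vee\coz\psi$ and frame distributivity; this computation is valid (your element $q=p_1^2+p_2^2+(ab)^2$ does dominate $\coz(s_1s_2)$, and $s_1s_2\in T$ because $Q$ is prime in $\mathcal{R}_cL$). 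What each buys: the paper's route is shorter and leans on a known result about minimal primes over $z$-ideals, while yours is self-contained, in effect reproving the relevant special case of that result, and the explicit form of the generated $z$-ideal is a reusable byproduct; your closing lattice-theoretic alternative via prime ideals of $\Coz_cL\subseteq\Coz L$ is plausible but only sketched, so the Zorn argument should be regarded as your actual proof.
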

		\begin{proof}
			To prove the nontrivial part of the corollary, let us assume that $P$  is a prime $z_c$-ideal of $\mathcal{R}_cL$. We can consider $S=\mathcal{R}_cL\setminus P$ as a multiplicatively closed set in 
			$\mathcal{R}L$. According to the previous proposition, we can choose a $z$-ideal $H$ in $\mathcal{R}L$ such that $P=H^c$. It is clear that
			$S\cap H=\emptyset$, which implies there exists a prime ideal $Q$ in $\mathcal{R}L$ minimal over $H$ 
			with $S\cap Q=\emptyset$. According to \cite[Theorem 1.1]{mason}, $Q$ is a $z$-ideal. 
			Subsequently, we can easily  show that  $P=H^c\subseteq Q^c\subseteq P$, which implies that $P=Q^c$, and thus we have completed the proof. 
		\end{proof}
		\begin {corol}\label{mc}
			For any completely regular frame $L$, every maximal ideal $Q$ of  $\mathcal{R}_cL$ is a contraction of a maximal ideal in $\mathcal{R}L$. Moreover, if $Q=M^c$, where $M$ is maximal ideal  in $\mathcal{R}L$, then $Q$ is real whenever $M$ is real.
		\end {corol}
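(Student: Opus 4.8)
The plan is to derive the first assertion from Corollary~\ref{pc} and then settle the realness clause by a short field-theoretic observation. First I would note that a maximal ideal $Q$ of $\mathcal{R}_cL$ is automatically a $z$-ideal in the ring-theoretic sense recalled above: if $\alpha\in Q$ and $\beta\in\mathcal{R}_cL$ lies in exactly the maximal ideals containing $\alpha$, then $\beta\in Q$ because $Q$ is one of those maximal ideals. Hence, by \cite[Proposition~4.1]{a}, $Q$ is a $z_c$-ideal, and, being maximal (hence prime), it is a prime $z_c$-ideal. Corollary~\ref{pc} then supplies a prime $z$-ideal $P$ of $\mathcal{R}L$ with $Q=P^c$. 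Choosing any maximal ideal $M$ of $\mathcal{R}L$ with $P\subseteq M$, we get $Q=P^c\subseteq M^c$; since $1\notin M$, the contraction $M^c=M\cap\mathcal{R}_cL$ is a proper ideal of $\mathcal{R}_cL$ containing the maximal ideal $Q$, so $Q=M^c$. This is the first statement.

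For the second part, assume $Q=M^c$ with $M$ real. The natural map $\mathcal{R}_cL\to\mathcal{R}L/M$ (the inclusion $\mathcal{R}_cL\hookrightarrow\mathcal{R}L$ followed by the quotient map) has kernel $M\cap\mathcal{R}_cL=Q$, hence factors through an injective ring homomorphism $\mathcal{R}_cL/Q\hookrightarrow\mathcal{R}L/M$. As $M$ is real, every element of $\mathcal{R}L$ is congruent modulo $M$ to a constant $\mathbf r$, and every such constant already belongs to $\mathcal{R}_cL$; therefore the image of $\mathcal{R}_cL/Q$ in $\mathcal{R}L/M$ is all of $\mathcal{R}L/M\cong\mathbb{R}$, so the embedding is onto, $\mathcal{R}_cL/Q\cong\mathbb{R}$, and $Q$ is real.

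The only delicate point — the one I would flag as the main obstacle — is ensuring that $Q$ actually meets the hypothesis of Corollary~\ref{pc}, i.e.\ that it is not merely a prime ideal but a prime $z_c$-ideal; this is where the ring-theoretic triviality above, together with \cite[Proposition~4.1]{a}, is essential, and it is also the step through which the complete regularity of $L$ enters (via Corollary~\ref{pc}). Everything else — extending $P$ to a maximal ideal of $\mathcal{R}L$, the properness of $M^c$, and identifying the image of $\mathcal{R}_cL/Q$ inside $\mathcal{R}L/M$ — is routine.
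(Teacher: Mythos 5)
Your proposal is correct and follows essentially the same route as the paper: it deduces that the maximal ideal $Q$ is a prime $z_c$-ideal, writes it as a contraction (the paper uses the preceding proposition directly with a plain $z$-ideal $H$, while you route through Corollary~\ref{pc}; either works), extends to a maximal ideal $M$ of $\mathcal{R}L$, and concludes $Q=M^c$ by maximality, with realness obtained from the monomorphism $\mathcal{R}_cL/Q\to\mathcal{R}L/M$. Your added observation that the constants make this embedding surjective only makes the final step more explicit than the paper's.
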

		\begin{proof}	
			Assume $Q$ is a maximal ideal in $\mathcal{R}_cL$. As $Q$ is a $z_c$-ideal, by the previous proposition, we can select a $z$-ideal $H$ in $\mathcal{R}L$ with $Q=H^c$. So, there exists a maximal ideal $M$ in $\mathcal{R}L$ with $H\subseteq M$. Thus, $ Q=H^c\subseteq M^c$ implies that $Q=M^c$ since $Q$ is maximal. For the final part, if $M$ is real, then the monomorphism
		$			\Psi : \frac{\mathcal{R}_cL}{Q} \to \frac{\mathcal{R}L}{M}$,
			defined by $\Psi(\alpha+Q)=\alpha+M$ completes the proof.
		\end{proof}
		
		The initial part of the subsequent  lemma was presented in \cite[Proposition 10.2]{tes} but  without its accompanying proof. Here,  we show that the converse of this statement holds  for any zero-dimensional frame.
		\begin{lemma}\label{dc}
			Suppose $\alpha, \beta\in\mathcal{R}_cL$. Then $(\coz\alpha)^*\leq(\coz\beta)^*$ implies that $\ann(\alpha)\subseteq\ann(\beta)$. The converse holds if $L$ is zero-dimensional.
		\end{lemma}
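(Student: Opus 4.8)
The plan is to first translate membership in an annihilator ideal into a condition on cozero elements, and then to exploit zero-dimensionality through the idempotents $e_c$ introduced above.

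I would begin by recording the basic dictionary: for $\alpha,\gamma\in\mathcal{R}_cL$ one has $\alpha\gamma={\bf0}$ if and only if $\coz\alpha\wedge\coz\gamma=0$, since $\coz(\alpha\gamma)=\coz\alpha\wedge\coz\gamma$ and $\mathcal{R}_cL$ is reduced (so $\coz\varphi=0$ exactly when $\varphi={\bf0}$). Equivalently, $\gamma\in\ann(\alpha)$ precisely when $\coz\gamma\le(\coz\alpha)^*$. With this in hand the forward implication is immediate: if $(\coz\alpha)^*\le(\coz\beta)^*$, then any $\gamma\in\ann(\alpha)$ satisfies $\coz\gamma\le(\coz\alpha)^*\le(\coz\beta)^*$, hence $\gamma\in\ann(\beta)$, giving $\ann(\alpha)\subseteq\ann(\beta)$.

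For the converse, I would assume $L$ is zero-dimensional and $\ann(\alpha)\subseteq\ann(\beta)$. Because $L$ is zero-dimensional, $(\coz\alpha)^*=\bigvee\{c\in BL\mid c\le(\coz\alpha)^*\}$, so it suffices to prove $c\le(\coz\beta)^*$ for every complemented $c\le(\coz\alpha)^*$. Fixing such a $c$, the element $e_c\in\mathcal{R}_cL$ recalled in Section~\ref{3} is idempotent with $\coz(e_c)=c$; since $\coz(e_c)\wedge\coz\alpha=c\wedge\coz\alpha=0$, the dictionary gives $e_c\in\ann(\alpha)\subseteq\ann(\beta)$, whence $c\wedge\coz\beta=\coz(e_c)\wedge\coz\beta=0$, i.e.\ $c\le(\coz\beta)^*$. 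Taking the join over all such $c$ yields $(\coz\alpha)^*\le(\coz\beta)^*$.

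The only genuinely nontrivial point — and the step I expect to be the crux — is the use of zero-dimensionality to decompose $(\coz\alpha)^*$ as a join of complemented elements and then to realize each such complemented element as the cozero of an actual (indeed idempotent) member of $\mathcal{R}_cL$, so that it can be tested against the annihilator containment; this is precisely why the hypothesis on $L$ is needed and why $\Coz_cL$ (via $BL\subseteq\Coz_cL$) enters. Everything else reduces to routine manipulation of the identity $\coz(\alpha\gamma)=\coz\alpha\wedge\coz\gamma$.
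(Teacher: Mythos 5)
Your proof is correct and follows essentially the same route as the paper: the converse is handled exactly as there, by testing each complemented $c\le(\coz\alpha)^*$ against the idempotent $e_c$ (so that $e_c\in\ann(\alpha)\subseteq\ann(\beta)$ forces $c\wedge\coz\beta=0$) and then using zero-dimensionality to assemble $(\coz\alpha)^*$ from such $c$. The only cosmetic difference is that you spell out the forward implication directly from $\coz(\alpha\gamma)=\coz\alpha\wedge\coz\gamma$, where the paper simply invokes the $\mathcal{R}_cL$ version of a lemma from the literature.
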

		\begin{proof}
			By applying a $\mathcal{R}_cL$ version of the proof  of Lemma 4.1 in \cite {du0}, we can  show that if $(\coz\alpha)^*\leq(\coz\beta)^*$, then it follows that $\ann(\alpha)\subseteq\ann(\beta)$.
			To establish the next part, we aim to show that $(\coz\alpha)^*\wedge\coz\beta=0$, which is equivalent to showing that $(\coz\alpha)^*\leq(\coz\beta)^*$. We can start by selecting $a\in BL$ such that $a\leq(\coz\alpha)^*$. 	Then, it follows that $0=a\wedge(\coz\alpha)^{**}=\coz e_a\wedge(\coz\alpha)^{**}$, which  implies that $\coz e_a\alpha=\coz e_a\wedge\coz\alpha\leq\coz e_a\wedge(\coz\alpha)^{**}=0$.  As a result, we have $  e_a\alpha={\bf0}$, leading to $e_a\in\ann(\alpha)$, and thus $e_a\beta={\bf0}$, based on the current hypothesis. Thus, $a\wedge\coz\beta=\coz e_a\wedge\coz\beta=0$. So, the property of zero-dimensionality results in $(\coz\alpha)^*\wedge\coz\beta=0$.
		\end{proof}
		
		\begin {corol}
			Suppose  $L$ is a zero-dimensional frame. Then, any $\alpha\in\mathcal{R}_cL$ is a zero-divisor  iff $\coz\alpha$ is not a dense element of $L$.
		\end {corol}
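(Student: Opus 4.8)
The plan is to prove the two directions separately, relying on two standard properties of the cozero map on $\mathcal{R}L$: multiplicativity, $\coz(\alpha\beta)=\coz\alpha\wedge\coz\beta$, and faithfulness, $\coz\gamma=0$ iff $\gamma={\bf0}$; recall also that $\coz\alpha$ is dense precisely when $(\coz\alpha)^*=0$.

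For the ``only if'' direction I would take a witness $\beta\in\mathcal{R}_cL\setminus\{{\bf0}\}$ with $\alpha\beta={\bf0}$. Then $\coz\beta\ne0$, and $\coz\alpha\wedge\coz\beta=\coz(\alpha\beta)=0$ gives $0\ne\coz\beta\leq(\coz\alpha)^*$, so $(\coz\alpha)^*\ne0$ and $\coz\alpha$ is not dense. This half uses neither zero-dimensionality nor the countability hypothesis.

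For the ``if'' direction I would start from $(\coz\alpha)^*\ne0$ and invoke zero-dimensionality to write $(\coz\alpha)^*=\bigvee\{x\in BL\mid x\leq(\coz\alpha)^*\}$, picking $a\in BL$ with $0\ne a\leq(\coz\alpha)^*$. The idempotent $e_a$ lies in $\mathcal{R}_cL$ (since $R_{e_a}\subseteq\{0,1\}$) and has $\coz e_a=a$, so $\coz(e_a\alpha)=a\wedge\coz\alpha\leq(\coz\alpha)^*\wedge\coz\alpha=0$, forcing $e_a\alpha={\bf0}$ while $e_a\ne{\bf0}$. Hence $\alpha$ is a zero-divisor. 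One could instead deduce this from the converse half of Lemma \ref{dc} by comparing annihilators, but the construction above is more direct and essentially reuses the one appearing in that lemma.

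I expect the only genuine obstacle to be the ``if'' direction: $(\coz\alpha)^*$ need not itself be a cozero element of $\mathcal{R}_cL$, so one cannot simply produce a $\beta$ with $\coz\beta=(\coz\alpha)^*$. Zero-dimensionality is exactly what lets one replace it by a nonzero \emph{complemented} element below it, and the $e_a$'s then furnish a zero-divisor partner that actually lives in $\mathcal{R}_cL$. The degenerate frame $L=\{0\}=\{1\}$ is vacuous, and for the equivalence to read correctly one adopts the usual convention counting ${\bf0}$ as a zero-divisor.
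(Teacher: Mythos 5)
Your proof is correct, and it is essentially the paper's route: the corollary is stated as an immediate consequence of Lemma \ref{dc} (specialize to $\beta={\bf 1}$, noting $\ann({\bf 1})=\{{\bf 0}\}$ and $(\coz{\bf 1})^*=0$), and your direct ``if'' direction simply re-runs the same $e_a$ construction with $a\in BL$, $0\ne a\leq(\coz\alpha)^*$ that the paper uses inside the proof of that lemma. The easy ``only if'' direction via $\coz(\alpha\beta)=\coz\alpha\wedge\coz\beta$ and faithfulness of $\coz$ matches the standard argument as well.
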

		
		\begin{definition}
			An ideal $Q$ of $\mathcal{R}_cL$ is termed a $d_c$-ideal if, for every $\alpha\in\mathcal{R}_cL$ and $\beta\in Q$, the condition $(\coz\alpha)^*=(\coz\beta)^*$ implies that $\alpha\in Q$.
		\end{definition}
		The following are characterizations of $d_c$-ideals in terms of the cozero elements.
		\begin {prop}\label{d}
			The following are equivalent for every ideal  $Q$ in the ring $\mathcal{R}_cL$.
			\begin{enumerate} 
				\item $Q$ is a $d_c$-ideal.
				\item For any $\alpha, \beta\in\mathcal{R}L$, if $\alpha\in Q$ and $(\coz\alpha)^* \leq (\coz \beta)^*$, then $\beta\in Q$.
				\item For any $\alpha, \beta\in\mathcal{R}L$, if $\alpha\in Q$ and $\coz \beta \leq (\coz\alpha)^{**}$, then $\beta\in Q$.
			\end{enumerate}
		\end {prop}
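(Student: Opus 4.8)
The plan is to establish the cycle $(1)\Rightarrow(2)\Rightarrow(1)$ together with the equivalence $(2)\Leftrightarrow(3)$, the latter being purely formal. First I would record the elementary facts that get used repeatedly: in any frame $(a\vee b)^*=a^*\wedge b^*$ and $a^{***}=a^*$, while for the cozero map on $\mathcal{R}_cL$ one has $\coz(\varphi^2)=\coz\varphi$ and $\coz(\varphi^2+\psi^2)=\coz\varphi\vee\coz\psi$. Since $(-)^*$ is order-reversing and $a^{***}=a^*$, the two conditions ``$(\coz\alpha)^*\le(\coz\beta)^*$'' and ``$\coz\beta\le(\coz\alpha)^{**}$'' are equivalent; hence the hypotheses of (2) and of (3) coincide and $(2)\Leftrightarrow(3)$ is immediate.

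For $(2)\Rightarrow(1)$, assume (2) and take $\alpha\in\mathcal{R}_cL$, $\beta\in Q$ with $(\coz\alpha)^*=(\coz\beta)^*$. Then $(\coz\beta)^*\le(\coz\alpha)^*$, and applying (2) to the member $\beta\in Q$ and the element $\alpha$ gives $\alpha\in Q$; so $Q$ is a $d_c$-ideal.

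The main step is $(1)\Rightarrow(2)$. Assume $Q$ is a $d_c$-ideal, let $\alpha\in Q$ and $\beta\in\mathcal{R}_cL$ satisfy $(\coz\alpha)^*\le(\coz\beta)^*$, and set $\gamma:=\alpha^2+\beta^2\in\mathcal{R}_cL$. Then $\coz\gamma=\coz\alpha\vee\coz\beta$, so $(\coz\gamma)^*=(\coz\alpha)^*\wedge(\coz\beta)^*=(\coz\alpha)^*$ by hypothesis. Because $\alpha\in Q$ and $Q$ is a $d_c$-ideal, this forces $\gamma\in Q$, and therefore $\beta^2=\gamma-\alpha^2\in Q$ since $\alpha^2\in Q$. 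Finally $(\coz\beta^2)^*=(\coz\beta)^*$, so applying the $d_c$-ideal property once more to $\beta^2\in Q$ and $\beta$ yields $\beta\in Q$, which is (2). (One gets $(3)\Rightarrow(1)$ in exactly the same way as $(2)\Rightarrow(1)$ after rewriting the hypothesis of (3) with the equivalence noted above, so the three statements are fully linked.)

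I do not expect a genuine obstacle. The only thing that must be noticed is that the auxiliary function should be the sum of squares $\alpha^2+\beta^2$ rather than $\alpha+\beta$, so that its cozero is \emph{exactly} $\coz\alpha\vee\coz\beta$ and the defining property of a $d_c$-ideal can be invoked; with a plain sum one would obtain only $\coz(\alpha+\beta)\le\coz\alpha\vee\coz\beta$, which would not suffice. Everything else is routine bookkeeping with the identities $(a\vee b)^*=a^*\wedge b^*$, $a^{***}=a^*$ and $\coz(\varphi^2)=\coz\varphi$.
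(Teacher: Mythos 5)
Your proof is correct, but it takes a different route from the paper's. The paper's (very terse) argument for $(1)\Rightarrow(2)$ hinges on the product: it cites the lattice fact that $(a\wedge b)^*=b^*$ whenever $a^*\leq b^*$, so with $\alpha\in Q$ and $(\coz\alpha)^*\leq(\coz\beta)^*$ one takes $\alpha\beta\in Q$, notes $\coz(\alpha\beta)=\coz\alpha\wedge\coz\beta$ and hence $(\coz(\alpha\beta))^*=(\coz\beta)^*$, and invokes the $d_c$-property once; it then closes the cycle $(2)\Rightarrow(3)\Rightarrow(1)$. You instead use the sum of squares $\gamma=\alpha^2+\beta^2$, the dual identity $(a\vee b)^*=a^*\wedge b^*$, and two applications of the $d_c$-property (first to get $\gamma\in Q$, then, after subtracting $\alpha^2$, to pass from $\beta^2$ to $\beta$), and you organize the logic as $(1)\Leftrightarrow(2)$ together with $(2)\Leftrightarrow(3)$ via the observation that the hypotheses of (2) and (3) are literally equivalent. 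Both arguments are sound: the paper's product trick is more economical (one auxiliary element, one application of the definition), while yours avoids the slightly less obvious identity $(a\wedge b)^*=b^*$ under $a^*\leq b^*$ and relies only on standard facts about pseudocomplements and $\coz(\varphi^2+\psi^2)=\coz\varphi\vee\coz\psi$; your remark that a plain sum $\alpha+\beta$ would not suffice is also well taken. (Like the paper, you implicitly read the ``$\mathcal{R}L$'' in items (2) and (3) as $\mathcal{R}_cL$, which is clearly the intended meaning.)
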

		\begin{proof}
			In view of the fact that, $(a\wedge b)^*= b^*$ whenever $a^*\leq b^*$ for any $a, b\in L$, it is easy to show that (1) implies (2). It is clear that (2)$\Rightarrow $(3) $\Rightarrow$ (1).
		\end{proof}
		
		For any $\alpha\in\mathcal{R}_cL$, let $P_\alpha$ denote the intersection of all minimal prime ideals of $\mathcal{R}_cL$ that contain $\alpha$. A proper ideal $Q$ of $\mathcal{R}_cL$
		is called a \emph{ $d$-ideal} if $P_\alpha\subseteq Q$ for every $\alpha\in Q$. Since the ring $\mathcal{R}_cL$ is reduced, according to \cite[Proposition 1.4]{a2000}, a proper ideal $Q$ of $\mathcal{R}_cL$ is a $d$-ideal  iff $\ann(\alpha)=\ann(\beta)$ and $\alpha$ belonging to $Q$ implies that $\beta$ also belongs to $Q$.
		The proof of the next theorem follows from Lemma \ref{dc}, Proposition \ref{d}, and \cite[Proposition 1.4]{a2000}.
		\begin{theorem}\label{d1}
			If $L$ is zero-dimensional, then an ideal  $Q$ of $\mathcal{R}_cL$ is a $d$-ideal  iff it is a $d_c$-ideal.
		\end{theorem}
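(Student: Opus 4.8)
The plan is to read the equivalence off from the two halves of Lemma \ref{dc}, the annihilator characterisation of $d$-ideals in reduced rings supplied by \cite[Proposition 1.4]{a2000}, and (optionally, to streamline one inclusion) the reformulations of $d_c$-ideals in Proposition \ref{d}. The only place the standing hypothesis of zero-dimensionality will actually be used is the converse half of Lemma \ref{dc}, so one of the two implications will in fact hold for every frame $L$.

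First I would prove that a $d$-ideal $Q$ of $\mathcal{R}_cL$ is always a $d_c$-ideal, with no restriction on $L$. Take $\beta\in Q$ and $\alpha\in\mathcal{R}_cL$ with $(\coz\alpha)^*=(\coz\beta)^*$. Applying the (unconditional) first assertion of Lemma \ref{dc} to the inclusions $(\coz\alpha)^*\leq(\coz\beta)^*$ and $(\coz\beta)^*\leq(\coz\alpha)^*$ gives $\ann(\alpha)\subseteq\ann(\beta)$ and $\ann(\beta)\subseteq\ann(\alpha)$, hence $\ann(\alpha)=\ann(\beta)$. Since $\mathcal{R}_cL$ is reduced and $Q$ is a $d$-ideal, \cite[Proposition 1.4]{a2000} yields $\alpha\in Q$; thus $Q$ satisfies the defining condition of a $d_c$-ideal.

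For the converse, assume $L$ is zero-dimensional and $Q$ is a (proper) $d_c$-ideal, and verify the criterion of \cite[Proposition 1.4]{a2000}. Let $\alpha\in Q$ and $\beta\in\mathcal{R}_cL$ with $\ann(\alpha)=\ann(\beta)$. Here the zero-dimensionality of $L$ enters: by the converse part of Lemma \ref{dc}, the inclusion $\ann(\alpha)\subseteq\ann(\beta)$ forces $(\coz\alpha)^*\leq(\coz\beta)^*$ and symmetrically $(\coz\beta)^*\leq(\coz\alpha)^*$, so $(\coz\alpha)^*=(\coz\beta)^*$. (Equivalently, one may invoke condition (2) of Proposition \ref{d} at this point, using only $(\coz\alpha)^*\leq(\coz\beta)^*$.) Since $\alpha\in Q$, the defining property of a $d_c$-ideal gives $\beta\in Q$. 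Hence, again by \cite[Proposition 1.4]{a2000}, $Q$ is a $d$-ideal.

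I do not anticipate a genuine technical obstacle: once Lemma \ref{dc} is in hand the argument is pure bookkeeping. The two points deserving care are (i) invoking the correct half of Lemma \ref{dc} in each direction — the hypothesis-free half drives $d$-ideal $\Rightarrow$ $d_c$-ideal, the zero-dimensional half drives $d_c$-ideal $\Rightarrow$ $d$-ideal — and (ii) the propriety convention: since $d$-ideals are by definition proper while the definition of a $d_c$-ideal as stated is not, the equivalence is to be understood for proper ideals (otherwise $\mathcal{R}_cL$ itself is a $d_c$-ideal that is not a $d$-ideal).
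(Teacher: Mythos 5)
Your argument is correct and is exactly the route the paper intends: it cites only Lemma \ref{dc}, Proposition \ref{d}, and \cite[Proposition 1.4]{a2000}, and your write-up simply fills in the bookkeeping, including the accurate observation that zero-dimensionality is needed only for the implication from $d_c$-ideal to $d$-ideal. Your caveat about properness (since $d$-ideals are proper by definition while the stated definition of $d_c$-ideal is not) is a reasonable clarification of the convention, not a gap.
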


\section{The ${\bf O}_c$-and ${\bf M}_c$-ideals of $\mathcal{R}_cL$} \label{4}

In this section, our goal is to provide examples of  specific types of $z_c$-ideals and $d_c$-ideals. We start with the following lemma.

\begin{lemma}\label{c1}
	The following statements are true for any frame $L$.
	\begin{enumerate} 
		\item Given $S\subseteq BL$ such that $a=\bigvee\limits_{s\in S} s$, then  $\bigvee r_0(a)=a$. In particular, for any $a\in L$, we have $\bigvee r_0(a)=a$ when $L$ is a zero-dimensional frame.
		\item  For any $s\in\Coz_cL$, $\bigvee r_0(s)=s$.
		\item For any $a\in L$, if $\bigvee r_0(a)=a$, then  $r_0(a^*)=\big(r_0(a)\big)^*$.
		\item For any zero-dimensional frame $L$, the map $r_0$ commutes with pseudocomplements.
		\item For any $a,b\in BL$, we have $r_0(a\vee b)=r_0(a)\vee r_0(b)$.
		\item  If $I\prec\!\!\prec J$ in $\beta_0L$, then there exists $b\in J$ such that $\bigvee I\prec\!\!\prec b$ in $L$.
		\item Let $s\in \Coz_cL$ and $J\in \beta_0L$. Then $r_0(s)\prec\!\!\prec J$ in $\beta_0L$  iff there is $b\in J$ with $s\prec\!\!\prec b$ in $L$.
		\item Suppose $a\in BL$ and $J\in \beta_0L$. Then $r_0(a)\prec\!\!\prec J$ in $\beta_0L$  iff  $a\in J$.
		\item Suppose $L$ is a zero-dimensional frame, $a\in L$, and $J\in \beta_0L$. Then $r_0(a)\prec\!\!\prec J$ in $\beta_0L$  iff there exists $b\in J$ such that $a\prec\!\!\prec b$ in $L$.
	\end{enumerate}
\end{lemma}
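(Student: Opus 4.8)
The statement I want to prove is Lemma \ref{c1}(9): for a zero-dimensional frame $L$, an element $a\in L$, and $J\in\beta_0L$, one has $r_0(a)\prec\!\!\prec J$ in $\beta_0L$ if and only if there exists $b\in J$ with $a\prec\!\!\prec b$ in $L$. My plan is to prove this by reducing to the parts of the lemma already at hand: part (8) handles the complemented case $a\in BL$, parts (1)--(4) give $\bigvee r_0(a)=a$ and the compatibility of $r_0$ with pseudocomplements in a zero-dimensional frame, and part (6) gives the converse direction almost for free. The two implications are asymmetric, so I would treat them separately.

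\smallskip

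\noindent\textbf{The easy direction ($\Leftarrow$).} Suppose $b\in J$ with $a\prec\!\!\prec b$ in $L$. Interpolating with the complete-below relation, pick a scale $\{a_q\}$ between $a$ and $b$; in a zero-dimensional frame every $a_q$ can be squeezed (using regularity/complete regularity inherited from zero-dimensionality and Theorem \ref{cs}-type reasoning, or more simply directly from part (6)) so that there is $c\in BL$ with $a\prec\!\!\prec c\prec\!\!\prec b$, hence $c\in\,\downarrow\! b\cap BL=r_0(b)\subseteq J$ because $r_0(b)$ is an ideal of $BL$ contained in $J$ once $b\in J$ --- wait, I should be careful: $J$ is an ideal of $BL$, and $b\in J$ is a complemented element, so indeed $\downarrow\! b\cap BL\subseteq J$. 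Now $a\prec\!\!\prec c$ in $L$ with $c\in BL$, so by part (8), $r_0(c)\prec\!\!\prec J'$ whenever $c\in J'$; applying this with a suitable intermediate ideal, and using that $a\prec\!\!\prec c$ forces $r_0(a)\prec\!\!\prec r_0(c)$ (the map $r_0$ is a frame map into $\beta_0L$ by standard adjunction facts, and $r_0(c)=\,\downarrow\! c\cap BL$ is a complemented element of $\beta_0L$ with $\bigvee r_0(c)=c$), we chain $r_0(a)\prec\!\!\prec r_0(c)\subseteq J$, giving $r_0(a)\prec\!\!\prec J$. Alternatively this direction is literally contained in the proof pattern of part (7) with $s=a\in\Coz_cL$ (recall $BL\subseteq\Coz_cL$ and more generally $a\in L$ zero-dimensional need not lie in $\Coz_cL$, so I will not use (7) directly but mimic it).

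\smallskip

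\noindent\textbf{The hard direction ($\Rightarrow$).} Assume $r_0(a)\prec\!\!\prec J$ in $\beta_0L$. Since $\beta_0L$ is compact regular (indeed zero-dimensional), $\prec\!\!\prec$ and $\prec$ agree there, so there is $K\in\beta_0L$ with $r_0(a)\prec K$ and $K\prec\!\!\prec J$; equivalently $\big(r_0(a)\big)^*\vee K=1_{\beta_0L}$. Now invoke part (4): in a zero-dimensional frame $r_0$ commutes with pseudocomplements, so $\big(r_0(a)\big)^*=r_0(a^*)$. Applying the join map $j_0:\beta_0L\to L$ (which preserves $\bigvee$ and $1$) to $r_0(a^*)\vee K=1$ yields $a^*\vee\bigvee K=1$ in $L$ (using $\bigvee r_0(a^*)=a^*$ from part (1), valid since $L$ is zero-dimensional), i.e. $a\prec\bigvee K$ in $L$. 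To upgrade $\prec$ to $\prec\!\!\prec$ and to land inside $J$ rather than $\bigvee K$, I use $K\prec\!\!\prec J$ together with part (6): there is $b\in J$ with $\bigvee K\prec\!\!\prec b$ in $L$. Combining $a\prec\bigvee K\prec\!\!\prec b$ and interpolating (in a completely regular frame, $a\prec x\prec\!\!\prec y$ implies $a\prec\!\!\prec y$; and a zero-dimensional frame appearing here is completely regular in the relevant sense, or one repeats the scale argument) gives $a\prec\!\!\prec b$ with $b\in J$, as required.

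\smallskip

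\noindent\textbf{Main obstacle.} The only genuinely delicate point is the very last upgrade $a\prec\bigvee K\prec\!\!\prec b\ \Rightarrow\ a\prec\!\!\prec b$: one must build the dyadic interpolating scale for $\prec\!\!\prec$ starting from a single $\prec$-step followed by an existing $\prec\!\!\prec$-scale, which is routine but needs the (completely regular) interpolation property of $\prec$ in $L$ --- here I would note that a zero-dimensional frame, as used throughout the paper in this context, is completely regular (its cozero part $\Coz_cL$ generates it and consists of countable joins of complemented elements), so $\prec$ interpolates and the scale can be inserted. Everything else is bookkeeping with the adjunction $j_0\dashv r_0$, the identities $\bigvee r_0(a)=a$ and $r_0(a^*)=(r_0(a))^*$ from parts (1) and (4), and the translation of $\prec\!\!\prec$ in $\beta_0L$ via part (6).
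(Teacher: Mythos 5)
Your proposal addresses only part (9) of the nine-part lemma, taking parts (1)--(8) as given. That is where the gap lies: the paper's proof spends essentially all of its effort on (1)--(7) (and treats (8) and (9) as immediate consequences), and your argument for (9) leans on those earlier parts at every step --- (1), (4), (6), (8) --- so as a proof of the stated lemma it is incomplete. Moreover, granted what you assume, part (9) is a two-line matter rather than the asymmetric argument you build: for the forward direction apply (6) directly to $I=r_0(a)\prec\!\!\prec J$ and use $\bigvee r_0(a)=a$ from (1) (zero-dimensionality), which yields $b\in J$ with $a\prec\!\!\prec b$ at once, with no need for interpolation in $\beta_0L$, part (4), or the upgrade $a\prec\bigvee K\prec\!\!\prec b\Rightarrow a\prec\!\!\prec b$ (that upgrade is correct, but superfluous); for the backward direction, $b\in J\subseteq BL$ gives $r_0(a)\subseteq r_0(b)$ with $r_0(b)$ complemented in $\beta_0L$ and $r_0(b)\subseteq J$, hence $r_0(a)\prec\!\!\prec J$ --- exactly the pattern the paper uses to prove (7).

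Two local inaccuracies inside your argument for (9) should also be repaired. First, the claim that in a zero-dimensional frame one can insert $c\in BL$ with $a\prec\!\!\prec c\prec\!\!\prec b$ is the defining property of \emph{strongly} zero-dimensional frames (the ``Theorem \ref{cs}-type reasoning'' you invoke), not of zero-dimensional ones; it is harmless here only because $b\in J$ is itself complemented, so one may simply take $c=b$. Second, $r_0$ is not a frame map: as a right adjoint it preserves meets but in general not joins --- indeed the whole point of Lemma \ref{c1}(5) and Proposition \ref{aa} is that join-preservation only holds in special cases. What your step actually needs is just monotonicity of $r_0$ together with the fact that $r_0(c)=\;\downarrow\!c\cap BL$ is complemented in $\beta_0L$ for $c\in BL$, which you do state; the appeal to ``$r_0$ is a frame map'' should be deleted.
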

\begin{proof}
	(1) Since $s\in r_0(a)$ for any $s\in S$, we  have  $ a=\bigvee\limits_{s\in S}s\leq\bigvee r_0(a).$
	Thus, we obtain $\bigvee r_0(a)=a$ since it is evident that $\bigvee r_0(a)\leq a$.
	
	(2) Given that $s$ belongs to the set $\Coz_cL$, there exists  a collection $\{s_n\}_{n\in \mathbb{N}}\subseteq  BL$ such that $s=\bigvee\limits_{n\in \mathbb{N}}s_n$. Consequently, by (1), it follows that $\bigvee r_0(s)=s$.
	
	(3) Because $r_0$ preserves zero and arbitrary meets, we have 
	$r_0(a)\wedge r_0(a^*)=r_0(a\wedge a^*)=r_0(0_L)=\{0_L\}=0_{\beta_0L}$, 
	showing that $r_0(a^*)\leq\big(r_0(a)\big)^*$. This establishes the inclusion $\subseteq$. To prove the other inclusion, we first show that $\bigvee\big(r_0(a)\big)^*\leq a^*$. If $x\in \big(r_0(a)\big)^*$, then the fact
	\[
	(r_0(a)\big)^*=\big\{c\in BL \mid c\wedge y=0_L, \hspace{3mm} \mbox{ for all } y\in r_0(a)\big\}
	\] 
	implies $x\wedge a=x\wedge\bigvee r_0(a)=\bigvee\big\{ x\wedge y \mid y\in r_0(a) \big\}=\bigvee\{0_L\}=0_L$.
	Thus $a\wedge \bigvee\big(r_0(a)\big)^*=\bigvee\big\{ a\wedge x \mid x\in \big(r_0(a)\big)^*\big\}=0_L$,
	implying $\bigvee \big(r_{0}(a)\big)^*\leq a^*$. Consequently, since $r_0$ is a right adjoint to $\bigvee$, we can conclude that $\big(r_0(a)\big)^*\leq r_0(a^*)$, proving the other inclusion.
	
	(4) It follows from (3) because  $a=\bigvee r_{0}(a)$ for any zero-dimensional frame.
	
	(5) Suppose $x\in r_0(a\vee b)$. Then $x=(x\wedge a)\vee(x\wedge b)$. This implies that $x\in r_0(a)\vee r_0(b)$, resulting in $r_0(a\vee b)\subseteq r_0(a)\vee r_0(b)$. The reverse inclusion is immediate,  so $$r_0(a\vee b)= r_0(a)\vee r_0(b).$$
	
	(6) Suppose $H\in \beta_0L$ such that $I\wedge H= 0_{\beta_0L}$ and $H\vee J=1_{\beta_0L}$. Then take $a \in H$ and $b\in J$ such that $a\vee b=1_L$. Hence,
	\begin{gather*}
		\begin{split}
			\bigvee I&=1_L\wedge\bigvee I
		=(a\vee b)\wedge\bigvee I
			 = (a\wedge\bigvee I)\vee (b\wedge\bigvee I)\\
			 &=\bigvee\{a\wedge x : x\in I\}\vee (b\wedge\bigvee I)
			 = 0_L\vee (b\wedge\bigvee I)
			 = b\wedge\bigvee I,
		\end{split}
	\end{gather*}
	which implies $\bigvee I\leq b$.  This shows that $\bigvee I\prec\!\!\prec b$ since $b\in BL$.
	
	(7) Suppose $b\in J$ such that $s\prec\!\!\prec b$. Since $b\in BL$, by (5), we  have $r_0(b)\prec\!\!\prec r_0(b)$ in $\beta_0L$. It follows that  $r_0(s)\prec\!\!\prec r_0(b)$ in $\beta_0L$ since $r_0(s)\subseteq r_0(b)$. Therefore, $r_0(b)\subseteq J$ implies $r_0(s)\prec\!\!\prec J$ in $\beta_0L$. The converse is evident from (2) and (6).
	
	(8) and (9) Obvious.
\end{proof}
The right adjoint $r: L\to\beta L$ preserves binary joints of the cozero elements, meaning that if $a, b\in\Coz L$, then $r(a\vee b)=r(a)\vee r(b)$. 
We want to show that if  $a, b\in\Coz_c L$, then $r_0(a\vee b)=r_0(a)\vee r_0(b)$.
We shall need the following notions introduced in \cite{ae}.
Suppose $a$ and $b$ are two elements of a frame $L$. Then  
$a$ is considered to be \emph{  $c$-completely below} $b$, denoted as $a\prec\!\!\prec_c\! b$ if there exists $\alpha\in\mathcal{R}_cL$ such that $a\wedge\coz\alpha=0$ and $\coz(\alpha-{\bf1})\leq b$.
Notably, the relations of $\prec$, $\prec\!\!\prec$, and $\prec\!\!\prec_c$ are equivalent for any compact zero-dimensional frame.
An ideal $I$ of $L$ is said to be \emph{  $c$-completely regular} if, for any $a\in I$, there exists $b\in I$ such that $a\prec\!\!\prec_cb$. The frame of $c$-completely regular ideals of a zero-dimensional $L$ is viewed as the $c$-Stone-\v{C}ech compactification of $L$, denoted by $\beta_c L$. We use $r_c$ to represent the right adjoint of the join map $j_c : \beta_c L \to L$.  Recall from \cite{ae} that $r_c(a)=\{x\in L \mid x\prec\!\!\prec_c a\}$  for any $a\in L$.
Define the map $g: \beta_0L\to\beta_c L$  by 
\[
g(I)=\{ x\in L \mid x\leq\bigvee J \mbox{ for some } J\in\beta_0L \mbox{ such that } J\prec I \mbox{ in } \beta_0L\}.
\]
That $g$ is an isomorphism follows from \cite[Page 110]{ba}. Moreover, if $J\in\beta_c L$ and we take
$
I=\bigvee\big\{H\in\beta_0L \mid \bigvee H\in J\big\},
$
then, we can deduce from \cite[Page 110]{ba} that $g(I)=J$. Now, it is easy to see that $r_c(a)=g(r_0(a))$ for any $a\in L$.
\begin {prop}\label{aa}
	Let $L$ be a zero-dimensional frame. Then $r_0(a\vee b)=r_0(a)\vee r_0(b)$ if $a,b\in\Coz_c L$.
\end {prop}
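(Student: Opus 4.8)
The plan is to exploit the isomorphism $g:\beta_0L\to\beta_cL$ together with the identity $r_c(a)=g(r_0(a))$ established just before the statement, reducing the claim for $r_0$ to the analogous claim for $r_c$. Since $g$ is a frame isomorphism, it preserves finite joins, so $r_0(a\vee b)=r_0(a)\vee r_0(b)$ holds if and only if $g(r_0(a\vee b))=g(r_0(a))\vee g(r_0(b))$, i.e. if and only if $r_c(a\vee b)=r_c(a)\vee r_c(b)$. Thus it suffices to prove that $r_c$ preserves binary joins of elements of $\Coz_cL$.

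To prove $r_c(a\vee b)=r_c(a)\vee r_c(b)$ for $a,b\in\Coz_cL$, the inclusion $\supseteq$ is automatic since $r_c$ is order-preserving (being a right adjoint) and $a,b\le a\vee b$. For the reverse inclusion, I would take $x\in r_c(a\vee b)$, meaning $x\prec\!\!\prec_c a\vee b$: there is $\alpha\in\mathcal{R}_cL$ with $x\wedge\coz\alpha=0$ and $\coz(\alpha-{\bf1})\le a\vee b$. Here I expect to use the standard interpolation-style argument for the $c$-completely-below relation, analogous to the classical proof that $r:L\to\beta L$ preserves cozero joins: one produces, from the witness $\alpha$ and from cozero presentations $a=\coz\mu$, $b=\coz\nu$ with $\mu,\nu\in\mathcal{R}_cL$, two elements $y\prec\!\!\prec_c a$ and $z\prec\!\!\prec_c b$ with $x\le y\vee z$. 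Concretely, one can split $\alpha$ using functions of the form $\alpha\vee(\text{something built from }\mu)$ and $\alpha\vee(\text{something built from }\nu)$, staying inside $\mathcal{R}_cL$ because $\Coz_cL$ is a sublattice and $\mathcal{R}_cL$ is closed under the relevant lattice and ring operations; the key point is that $\coz(\alpha-{\bf1})\le \coz\mu\vee\coz\nu$ lets one distribute the "part near $1$" of $\alpha$ over $\coz\mu$ and $\coz\nu$. This would give $x\in r_c(a)\vee r_c(b)$, completing the inclusion $\subseteq$.

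The main obstacle I anticipate is carrying out this splitting entirely within $\mathcal{R}_cL$ rather than within $\mathcal{R}L$: the classical decomposition lemma for $\prec\!\!\prec$ in $\beta L$ uses arbitrary continuous functions, but here I must verify that every auxiliary function constructed has countable pointfree image. This should follow from the facts recorded in the preliminaries — that $\Coz_cL$ is a sublattice of $L$ containing $0,1$ and closed under countable joins when $L$ is completely regular (though here $L$ is only assumed zero-dimensional, so I would instead lean on the structural results from \cite{tes} and \cite{ae} about $\mathcal{R}_cL$ and the relation $\prec\!\!\prec_c$), and that $\mathcal{R}_cL$ is a subring closed under the ring and lattice operations. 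An alternative, possibly cleaner route avoiding any function gymnastics: use Lemma \ref{c1}(7), which characterizes $r_0(s)\prec\!\!\prec J$ for $s\in\Coz_cL$, to show directly that $r_0(a)\vee r_0(b)$ has the same completely-regular-ideal "closure" as $r_0(a\vee b)$; since both are elements of the zero-dimensional compact frame $\beta_0L$ where $\prec$, $\prec\!\!\prec$ and $\prec\!\!\prec_c$ coincide, comparing which ideals lie rather-below each side via Lemma \ref{c1}(6)–(7) should force equality. I would attempt the $g$-transported argument first and fall back on this second approach if the in-$\mathcal{R}_cL$ bookkeeping becomes unwieldy.
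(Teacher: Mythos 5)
Your reduction step is exactly the paper's: pass through the isomorphism $g:\beta_0L\to\beta_cL$ and the identity $r_c(a)=g(r_0(a))$, so that the claim becomes $r_c(a\vee b)=r_c(a)\vee r_c(b)$ for $a,b\in\Coz_cL$. The difference is in how that remaining statement is discharged. The paper simply cites Lemma 5.5 of \cite{ae}, which asserts precisely this join-preservation property of $r_c$ on $\Coz_cL$; your proposal instead tries to prove it from scratch, and that part is only a sketch. The ``splitting'' of the witness $\alpha$ (with $x\wedge\coz\alpha=0$ and $\coz(\alpha-{\bf1})\le\coz\mu\vee\coz\nu$) into two functions witnessing $y\prec\!\!\prec_c a$ and $z\prec\!\!\prec_c b$ with $x\le y\vee z$ is exactly the nontrivial content of that lemma, and you do not actually construct the auxiliary functions or verify that they have countable pointfree image --- you explicitly defer this, and it is the whole difficulty. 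So as written the proposal has a genuine gap: the key fact it reduces to is neither proved nor cited, even though it is true and available in the literature.

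Your fallback route does not close the gap either. Lemma \ref{c1}(7) characterizes when $r_0(s)\prec\!\!\prec J$ for $s\in\Coz_cL$, i.e.\ when $r_0(s)$ sits completely below a given ideal; to compare $r_0(a\vee b)$ with $r_0(a)\vee r_0(b)$ in the compact regular frame $\beta_0L$ you would instead need to control the ideals lying completely below $r_0(a\vee b)$. Via Lemma \ref{c1}(6) such an ideal $H$ yields a complemented $b'\le a\vee b$ with $\bigvee H\prec\!\!\prec b'$, and one is then forced to decompose $b'$ as a join of complemented elements below $a$ and below $b$ --- which is not automatic and is again essentially the content of Lemma 5.5 of \cite{ae}. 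In short: your strategy is the right one and matches the paper's, but the proof is complete only once you either cite that lemma or carry out the interpolation argument inside $\mathcal{R}_cL$ in full.
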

\begin{proof}
	By Lemma 5.5 in \cite{ae}, we have $r_c(a\vee b)=r_c(a)\vee r_c(b)$ for any $a,b\in\Coz_c L$. It follows that
	\[
			r_0(a\vee b)=g^{-1}(r_c(a\vee b))=g^{-1}\big(r_c(a)\vee r_c(b)\big)=g^{-1}\big(r_c(a)\big)\vee g^{-1}\big(r_c(b)\big)=r_0(a)\vee r_0(b).
	\]
\end{proof}

Let us recall how the ${\bf O}$- and ${\bf M}$-ideals are defined as explained in \cite{du0}. Consider a frame map $h : \beta L \to M$. We can define  the ideals ${\bf M}^h$ and ${\bf O}^h$ of $\mathcal{R}L$ as follows:
$${\bf M}^h=\{\alpha\in\mathcal{R}L  \mid h(r(\coz\alpha))=0_M\} 	\quad	\text{ and } \quad {\bf O}^h= \{\alpha\in\mathcal{R}L  \mid h(r((\coz\alpha)^*))=1_M\}.$$
In particular, if we have $I\in\beta L$ and $h: \beta L \to \uparrow I$ is the homomorphism defined by the correspondence $J \mapsto J\vee I$, then ${\bf M}^h$ and ${\bf O}^h$ are denoted by ${\bf M}^I$ and ${\bf O}^I$, respectively. Consequently, based on the properties of $r$, we obtain
${\bf M}^I=\{\alpha\in\mathcal{R}_cL  \mid r(\coz\alpha)\subseteq I\}$ and 
$$ {\bf O}^I= \{\alpha\in\mathcal{R}L  \mid r(\coz\alpha)\prec\!\!\prec I\}=\{\alpha\in\mathcal{R}L  \mid \coz\alpha\in I\}.$$

Motivated by the  above discussion and the definition of ${\bf O}_c$- and ${\bf M}_c$-ideals of $C_c(X)$ given in \cite{akko}, we introduce the ${\bf O}_c$- and ${\bf M}_c$-ideals of $\mathcal{R}_cL$ as follows.

{\rm\begin{example}\label{mo}
		Let $h : \beta_0L \to M$ be a frame map. We define   ${\bf M}^h_c$ and ${\bf O}^h_c$ as follows:
		$${\bf M}^h_c=\{\alpha\in\mathcal{R}_cL  \mid h(r_0(\coz\alpha))=0_M\} 	\quad	\text{ and } \quad {\bf O}^h_c= \{\alpha\in\mathcal{R}_cL  \mid h(r_0((\coz\alpha)^*))=1_M\}.$$
		In particular, if we have $I\in\beta_0L$ and $h: \beta_0L \to \uparrow\!I$ is the homomorphism defined by the correspondence $J \mapsto J\vee I$, then we denote 
		${\bf M}^h_c$ and ${\bf O}^h_c$ by ${\bf M}^I_c$ and ${\bf O}^I_c$, respectively, so that 
		$${\bf M}^I_c=\{\alpha\in\mathcal{R}_cL  \mid r_0(\coz\alpha)\subseteq I\} 	\quad	\text{ and } \quad {\bf O}^I_c= \{\alpha\in\mathcal{R}_cL  \mid I\vee r_0((\coz\alpha)^*)=1_{\beta_0L}\}.$$
		By (2) and (3) of Lemma \ref{c1}  and the fact that the rather  below relation coincides with the completely below relation in $\beta_0L$, we would have
		${\bf O}^I_c= \{\alpha\in\mathcal{R}_cL  \mid  r_0(\coz\alpha)\prec\!\!\prec I\}$,
		and consequently, by Lemma \ref{c1} (7),
		${\bf O}^I_c= \{\alpha\in\mathcal{R}_cL  \mid \coz\alpha\prec\!\!\prec\coz\delta \in I \mbox{ for some } \delta\in\mathcal{R}_cL\}$.
		It is straightforward to show ${\bf M}^I_c$ is a $z_c$-ideal. 
		Using a proof analogous to Example 4.11 in \cite{du0}, we can establish that ${\bf O}^I_c$ is an ideal.
		Now if $\alpha\in {\bf O}^I_c$ and $\delta\in\mathcal{R}_cL$ such that $(\coz\alpha)^* = (\coz \beta)^*$, then $1_{M}=h(r_0((\coz\alpha)^*)) = h(r_0((\coz\beta)^*))$, showing that $\beta\in {\bf O}^I_c$. So, ${\bf O}^I_c$ is a $d_c$-ideal. It is evident that ${\bf O}^I_c\subseteq {\bf M}^I_c$ for any $I\in\beta_0L$. Clearly, the other inclusion is true whenever $I\in B(\beta_0L)$.
\end{example}} 

\begin{remark}		
	It is straightforward to observe that 	${\bf M}^I_c={\bf O}^I_c=\{\bf0\}$ whenever $h : \beta_0L\to M$ is a dense homomorphism.
	Conversely, if ${\bf O}^I_c$ is the zero ideal, then $h$ is dense. Let $I\in\beta_0L$ such that $h(I)=0_M$. Suppose that $\alpha\in\mathcal{R}_cL$ for which $\coz\alpha\in I$. Then, by Lemma \ref{c1} (7), $r_0(\coz\alpha)\prec\!\!\prec I$, 
	and so $h(r_0((\coz\alpha)^*))=1_M$. This means that $\alpha\in{\bf O}^I_c$. Therefore, the only cozero element of $\Coz_cL$ contained in $I$ is $0_L$. Consequently, $I=\{0_L\}=0_{\beta_0L}$.
\end{remark}
By using Lemma \ref{cb}, we obtain the following result, which helps  us to gain a clearer understanding of the elements in ${\bf O}^I_c$-ideals.

\begin{theorem}\label{ig}
	If $I\in\beta_0L$, then  the ideal ${\bf O}^I_c$ is generated by a set of idempotents  in $\mathcal{R}_cL$.
\end{theorem}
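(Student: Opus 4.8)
The plan is to show that every $\alpha\in{\bf O}^I_c$ lies in the ideal generated by the idempotents $e_a$ with $a\in BL$ and $a\in I$, and conversely that each such $e_a$ belongs to ${\bf O}^I_c$. For the easy inclusion: if $a\in BL\cap I$, then $(\coz e_a)^*=a^*$, and since $a\vee a^*=1$ we get $I\vee r_0((\coz e_a)^*)=I\vee r_0(a^*)\supseteq r_0(a)\vee r_0(a^*)=r_0(a\vee a^*)=r_0(1_L)=1_{\beta_0L}$ by Lemma \ref{c1}(5); hence $e_a\in{\bf O}^I_c$. Thus the ideal generated by $\{e_a\mid a\in BL\cap I\}$ is contained in ${\bf O}^I_c$.

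For the reverse inclusion, let $\alpha\in{\bf O}^I_c$. By the reformulation noted in Example \ref{mo}, there is $\delta\in\mathcal{R}_cL$ with $\coz\alpha\prec\!\!\prec\coz\delta$ and $\coz\delta\in I$. First I would apply Lemma \ref{cb} to the relation $\coz\alpha\prec\!\!\prec\coz\delta$ to obtain $\rho\in\mathcal{R}_cL$ with $\alpha=\rho\delta$; so it suffices to exhibit $\delta$ itself as a finite $\mathcal{R}_cL$-combination of idempotents $e_a$ with $a\in BL\cap I$. Since $\coz\delta\in\Coz_cL$, it is a countable join $\coz\delta=\bigvee_{n}a_n$ of complemented elements $a_n\in BL$. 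Here is where I must use the hypothesis $\coz\delta\in I$ more carefully: $I$ is an ideal of $BL$, so I would like to conclude each $a_n\in I$. Since $a_n\leq\coz\delta$ and $\coz\delta\in I$ — wait, $\coz\delta$ need not itself be complemented, so "$\coz\delta\in I$" is shorthand for $\coz\delta\in I$ as an element of $\Coz_cL\cap$(the downward closure); more precisely $\coz\alpha\prec\!\!\prec\coz\delta\in I$ should be read via Lemma \ref{c1}(7) as $r_0(\coz\alpha)\prec\!\!\prec I$, which by Lemma \ref{c1}(6) (applied with $I'=r_0(\coz\delta)$, or directly) gives a complemented $b\in I$ with $\coz\alpha\leq b$. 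So actually I would bypass $\delta$: from $r_0(\coz\alpha)\prec\!\!\prec I$ in the compact zero-dimensional frame $\beta_0L$, pick $b\in BL$ with $r_0(\coz\alpha)\leq r_0(b)$ and $b\in I$ (Lemma \ref{c1}(6),(8)). Then $\coz\alpha\leq b$, so $\coz e_b\wedge\coz\alpha=\coz\alpha$, hence $\coz(e_b\alpha)=\coz\alpha$... no, I need $e_b\alpha=\alpha$, i.e. $\coz\alpha\prec\!\!\prec\coz e_b=b$ so that Lemma \ref{cb} gives $\alpha=\rho e_b$ for some $\rho\in\mathcal{R}_cL$. Since $\coz\alpha\leq b$ and $b\in BL$, indeed $\coz\alpha\prec\!\!\prec b$ (a complemented element is completely above everything below it). Therefore $\alpha=\rho e_b$ with $e_b$ idempotent and $b\in BL\cap I$, so $\alpha$ lies in the ideal generated by these idempotents.

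Combining the two inclusions, ${\bf O}^I_c$ equals the ideal generated by $\{e_a\mid a\in BL,\ a\in I\}$, a set of idempotents in $\mathcal{R}_cL$. The main obstacle is the bookkeeping connecting the three equivalent descriptions of ${\bf O}^I_c$ — in particular extracting a single complemented $b\in I$ above $\coz\alpha$ from the relation $r_0(\coz\alpha)\prec\!\!\prec I$ — which is exactly what Lemma \ref{c1}(6)–(8) is designed to supply; once $b$ is in hand, Lemma \ref{cb} does the rest mechanically.
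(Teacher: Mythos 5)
Your proposal is correct and takes essentially the same route as the paper: both identify the generating set $\{e_a \mid a\in I\}$ and use Lemma \ref{cb} to factor any $\alpha\in{\bf O}^I_c$ as $\rho e_b$ for some complemented $b\in I$ with $\coz\alpha\prec\!\!\prec b$. Your detour through $r_0$ and Lemma \ref{c1}(6) is harmless but unnecessary, since in the reformulation $\coz\alpha\prec\!\!\prec\coz\delta\in I$ the element $\coz\delta$ is automatically complemented because $I$ is an ideal of $BL$.
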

\begin{proof}
	We claim that ${\bf O}^I_c$ is generated by the $E=\{e_a \mid a\in I\}$. 
	If $a\in I$, then $\coz e_a\prec\!\!\prec\coz e_a=a\in I$ implies $e_a$ belongs to ${\bf O}^I_c$. So we have $E\subseteq {\bf O}^I_c$. Now, if $\alpha\in {\bf O}^I_c$, then there is an element $\delta\in\mathcal{R}_cL$  
	such that $\coz\delta\in I$ and $\coz\alpha\prec\!\!\prec\coz\delta$. Since $\coz\delta\in BL$, we have $\coz\delta=\coz e_{\coz\delta}$, and so, by Lemma \ref{cb}  there exists $\beta\in\mathcal{R}_cL$ such that $\alpha=\beta e_{\coz\delta}$, and we are done.
\end{proof}
We aim to emphasize that the preceding proof shows the next corollary, going beyond the mere assertion that ${\bf O}^I_c$ is generated only by idempotents. This result reveals a more substantial property.

\begin  {corol}\label{o}
	If $I\in Pt(\beta_0L) $ and $\alpha\in{\bf O}^I_c$, then there exists $\delta\in\mathcal{R}_cL$ such that $\alpha=e_{\coz\alpha}\delta$.
\end  {corol}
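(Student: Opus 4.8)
The plan is to read the sharpened statement off the proof of Theorem~\ref{ig}. Applied to $\alpha\in{\bf O}^I_c$, that proof produces $\delta_0\in\mathcal{R}_cL$ with $\coz\delta_0\in I$ and $\coz\alpha\prec\!\!\prec\coz\delta_0$, and then, since $\coz\delta_0\in BL$ with $\coz\delta_0=\coz e_{\coz\delta_0}$, Lemma~\ref{cb} yields $\alpha=\beta e_{\coz\delta_0}$ for some $\beta\in\mathcal{R}_cL$. What the hypothesis $I\in Pt(\beta_0L)$ should buy us is the right to use $e_{\coz\alpha}$ in place of $e_{\coz\delta_0}$, and the key reduction is that \emph{it suffices to prove $\coz\alpha\in BL$}: once $\coz\alpha$ is complemented, $e_{\coz\alpha}$ is defined with $\coz e_{\coz\alpha}=\coz\alpha$, a complemented element is completely below itself so $\coz\alpha\prec\!\!\prec\coz e_{\coz\alpha}$, and Lemma~\ref{cb} applied to the pair $\alpha, e_{\coz\alpha}$ furnishes $\delta\in\mathcal{R}_cL$ with $\alpha=e_{\coz\alpha}\delta$.

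To prove $\coz\alpha\in BL$ I would use that $\beta_0L$ is compact and zero-dimensional, hence regular, so the prime element $I$ is a maximal element of $\beta_0L$, equivalently a prime (maximal) ideal of the Boolean algebra $BL$, and $\uparrow\!I\cong\{0,1\}$. Under this identification the homomorphism $h\colon\beta_0L\to\,\uparrow\!I$, $J\mapsto J\vee I$, sends $J$ to $0$ exactly when $J\subseteq I$. Combining this with the description of ${\bf O}^I_c$ in Example~\ref{mo} and parts~(2) and~(3) of Lemma~\ref{c1}, membership $\alpha\in{\bf O}^I_c$ amounts to $r_0\big((\coz\alpha)^*\big)\not\subseteq I$ (and then, as ${\bf O}^I_c\subseteq{\bf M}^I_c$, also $r_0(\coz\alpha)\subseteq I$). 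So I can pick $d\in BL$ with $d\le(\coz\alpha)^*$ and $d\notin I$; from $d\wedge d^*=0\in I$ and primeness of $I$ it follows that $d^*\in I$. Since the witness $\coz\delta_0$ from the proof of Theorem~\ref{ig} also lies in $I$, the element $c:=\coz\delta_0\vee d^*$ lies in $I\cap BL$, and then $\coz\alpha\le\coz\delta_0\le c$ gives $\coz\alpha\prec\!\!\prec c$, while $d\le(\coz\alpha)^*$ gives $\coz\alpha\wedge d=0$ and $c^*\le d$.

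The remaining step is to conclude, from $\coz\alpha\wedge d=0$, $\coz\alpha\prec\!\!\prec c\in I$, and $c^*\le d$, that $\coz\alpha\vee d=1$; then $d$ is a complement of $\coz\alpha$, so $\coz\alpha\in BL$ with $(\coz\alpha)^*=d$, and the result follows by the reduction in the first paragraph. I expect precisely this last implication — turning the one-sided approximations that primeness of $I$ makes available into genuine complementedness of $\coz\alpha$ — to be the real content and the main obstacle of the proof, everything before it being a routine rereading of the proof of Theorem~\ref{ig}.
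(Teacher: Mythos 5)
Your proposal breaks exactly at the step you flagged, and it is not merely a technical obstacle: the reduction target is false, since for a prime $I$ membership in ${\bf O}^I_c$ does not force $\coz\alpha$ to be complemented. Concretely, take $X=\{0\}\cup\{1/n \mid n\in\mathbb{N}\}\cup\{2\}\subseteq\mathbb{R}$ (compact, zero-dimensional), $L=\mathfrak{O}X$, and let $\alpha\in\mathcal{R}_cL$ correspond to $f\in C_c(X)$ with $f(x)=x$ for $x\neq 2$ and $f(2)=0$. Let $I\in Pt(\beta_0L)$ be the maximal (prime) ideal of $BL$ consisting of the clopen sets not containing the isolated point $2$. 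The clopen set $b=\{0\}\cup\{1/n \mid n\in\mathbb{N}\}$ lies in $I$, and $\coz\alpha=\{1/n \mid n\in\mathbb{N}\}\leq b$ with $b$ complemented, so $\coz\alpha\prec\!\!\prec b\in \Coz_cL\cap I$ and hence $\alpha\in{\bf O}^I_c$; but $\coz\alpha$ is open and not closed, so $\coz\alpha\notin BL$ and $e_{\coz\alpha}$ is not even defined (and no idempotent can have cozero $\coz\alpha$, since idempotents have complemented cozero). In your notation one is forced to take $d=\{2\}=(\coz\alpha)^*$ and $c=b$; all your intermediate facts hold ($\coz\alpha\wedge d=0$, $\coz\alpha\prec\!\!\prec c\in I$, $c^*\leq d$), yet $\coz\alpha\vee d=\{1/n\mid n\in\mathbb{N}\}\cup\{2\}\neq 1_L$. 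Primeness of $I$ gives one-sided information ``at $I$'' but says nothing about the frontier of $\coz\alpha$ inside the witness $c$, which is exactly where complementedness fails.

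Accordingly, you should not try to upgrade the idempotent to $e_{\coz\alpha}$. The paper's own justification of this corollary is nothing more than a re-reading of the proof of Theorem \ref{ig}: for $\alpha\in{\bf O}^I_c$ one gets a witness $\delta$ with $\coz\delta\in I$ and $\coz\alpha\prec\!\!\prec\coz\delta=\coz e_{\coz\delta}$, and Lemma \ref{cb} gives $\alpha=\beta e_{\coz\delta}$; the idempotent produced is $e_{\coz\delta}$, an idempotent belonging to ${\bf O}^I_c$, not $e_{\coz\alpha}$. Read literally, the printed statement presupposes $\coz\alpha\in BL$ and the example above shows that reading is false; what is actually used later (in the proof of Lemma \ref{om}(2)) is precisely the weaker form ``there exist an idempotent $e\in{\bf O}^I_c$ and $\delta\in\mathcal{R}_cL$ with $\alpha=e\delta$'', which the first paragraph of your proposal already establishes, and for which the hypothesis $I\in Pt(\beta_0L)$ is not needed. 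So the correct resolution is to state the factorization with $e_{\coz\delta}$ (for a witness $\delta$ with $\coz\delta\in I$) in place of $e_{\coz\alpha}$, not to attempt a proof that $\coz\alpha\in BL$.
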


We conclude this section with the following lemma. To remind the reader, the next notation is used. 
If $Q$ is an ideal of $\mathcal{R}_cL$, then $\Coz[Q]$ represents the ideal of $\Coz_cL$ defined as $$\Coz[Q]=\{\coz\alpha \mid \alpha\in Q\}.$$
Conversely, if $J$ is an ideal of $\Coz_cL$, then $\Coz^\leftarrow[J] $ is the ideal of $\mathcal{R}_cL$ given by
$$\Coz^\leftarrow[J]=\{\alpha\in\mathcal{R}_cL \mid \coz\alpha\in J\}.$$
\begin{lemma}\label{i} The following statements hold for any frame $L$.
	\begin{enumerate} 
		\item If $I\in\beta_0L$, then $\bigvee\Coz[{\bf O}^I_c]=\bigvee\Coz[{\bf M}^I_c]=\bigvee I$.
		\item If $I, J\in\beta_0L$, then ${\bf M}^I_c= {\bf M}^J_c$  iff $I=J$.
	\end{enumerate}
\end{lemma}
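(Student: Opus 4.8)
The plan is to treat the two assertions separately, leaning on the inclusion ${\bf O}^I_c\subseteq {\bf M}^I_c$ recorded in Example~\ref{mo}, on the behaviour of the right adjoint $r_0$ collected in Lemma~\ref{c1}, and on the description of ${\bf O}^I_c$ furnished by Theorem~\ref{ig}.

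For part~(1), I would first note that ${\bf O}^I_c\subseteq {\bf M}^I_c$ gives $\Coz[{\bf O}^I_c]\subseteq\Coz[{\bf M}^I_c]$, hence $\bigvee\Coz[{\bf O}^I_c]\le\bigvee\Coz[{\bf M}^I_c]$. Next, for the upper bound $\bigvee\Coz[{\bf M}^I_c]\le\bigvee I$: if $\alpha\in {\bf M}^I_c$ then by definition $r_0(\coz\alpha)\subseteq I$, and since $\coz\alpha\in\Coz_cL$, Lemma~\ref{c1}(2) yields $\coz\alpha=\bigvee r_0(\coz\alpha)\le\bigvee I$; joining over all $\alpha\in {\bf M}^I_c$ gives the claim. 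Finally, for $\bigvee I\le\bigvee\Coz[{\bf O}^I_c]$: by Theorem~\ref{ig} each $e_a$ with $a\in I$ lies in ${\bf O}^I_c$, and $\coz e_a=a$, so $a\in\Coz[{\bf O}^I_c]$ for every $a\in I$, whence $\bigvee I\le\bigvee\Coz[{\bf O}^I_c]$. Chaining the three inequalities closes the cycle and produces all the stated equalities.

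For part~(2), the forward implication is immediate. For the converse, assume ${\bf M}^I_c={\bf M}^J_c$; I would prove $I\subseteq J$, the reverse inclusion being symmetric. The key elementary remark is that for $a\in BL$ one has $a\in r_0(a)=\mathord{\downarrow}\!a\cap BL$, so $r_0(a)\subseteq K$ forces $a\in K$ for any ideal $K$ of $BL$, while conversely $a\in K$ forces $r_0(a)=\mathord{\downarrow}\!a\cap BL\subseteq K$ since $K$ is a down-set. Combining this with $\coz e_a=a$ and the definition of ${\bf M}^I_c$, we get, for every $a\in BL$, $a\in I \Leftrightarrow r_0(\coz e_a)\subseteq I \Leftrightarrow e_a\in {\bf M}^I_c$, and likewise with $J$ in place of $I$. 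Since ${\bf M}^I_c={\bf M}^J_c$, it follows that $a\in I\Leftrightarrow a\in J$ for all $a\in BL$, and as $I,J\subseteq BL$ we conclude $I=J$.

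The only point that needs care is that part~(2) cannot be obtained merely by quoting part~(1): equality of joins $\bigvee I=\bigvee J$ does not force $I=J$. The substitute is the idempotent argument above, which recovers the membership relation $a\in I$ from the ring-theoretic datum $e_a\in {\bf M}^I_c$; everything else is routine bookkeeping with Lemma~\ref{c1} and Theorem~\ref{ig}.
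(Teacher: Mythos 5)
Your proposal is correct and follows essentially the same route as the paper: part (1) via the chain $\bigvee\Coz[{\bf O}^I_c]\le\bigvee\Coz[{\bf M}^I_c]\le\bigvee I$ (using Lemma~\ref{c1}(2) and the membership $e_a\in{\bf O}^I_c$ for $a\in I$), and part (2) by using the idempotents $e_a$ to recover membership of $a$ in $I$ from ${\bf M}^I_c$. The only cosmetic difference is that in part (2) you conclude $a\in J$ directly from $a\in r_0(a)=\mathord{\downarrow}a\cap BL\subseteq J$, whereas the paper routes the same observation through Lemma~\ref{c1}(8).
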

\begin{proof}
	(1) If $\alpha\in{\bf M}^I_c$, then $r_0(\coz\alpha)\subseteq I$. So, according to Lemma \ref{c1}(3), 
	we  have $\coz\alpha=\bigvee r_0(\coz\alpha)\leq \bigvee I$. Consequently, $\bigvee\Coz[{\bf O}^I_c]\leq\bigvee\Coz[{\bf M}^I_c]\leq\bigvee I$. Now suppose $a\in I$. Then $e_a\in {\bf O}^I_c$, 
	and hence $a=\coz e_a\leq \bigvee\Coz[{\bf O}^I_c]$. It follows that $\bigvee I\leq\bigvee\Coz[{\bf O}^I_c]$, 
	and we are done.
	
	(2) To prove the nontrivial part of the lemma, assume ${\bf M}^I_c= {\bf M}^J_c$. Now, let $a\in I$. Then $r_0(\coz  e_a)\subseteq I$, showing that $  e_a\in{\bf M}^I_c= {\bf M}^J_c$, and so $r_0(\coz  e_a)\subseteq J$. This implies that   $r_0(a)\prec\!\!\prec r_0(\coz e_a)\subseteq J$, and so  $a\in J$ by lemma \ref{c1} (8). Consequently, $I\subseteq J$. Similarly, $J\subseteq I$.
\end{proof}

\section{ Describing maximal ideals in $\mathcal{R}_cL$}\label{5}

In this section, we use  previously obtained results to derive $\mathcal{R}_cL$ versions of well-known theorems concerning specific ideals in the ring $C_c(X)$.  One such theorem is the $\mathcal{R}_cL$ analogue of  the Gelfand and Kolmogoroff theorem, which concerns maximal ideals of $C_c(X)$ (refer to \cite[Theorems 4.2 and 4.8]{akko}. We start with the following lemma. By
using  a proof similar to \cite[Proposition 3.12]{eks}, we can show that the following statements hold:
\begin{enumerate} 
	\item If $J$ is a maximal ideal of $\Coz_c[L]$, then $\Coz^\leftarrow[J] $ is a maximal ideal of $\mathcal{R}_cL$.
	\item If $Q$ is a maximal ideal of $\mathcal{R}_cL$, then $\Coz[Q]$ is a maximal ideal of $\Coz_cL$.
	\item An ideal $Q$ is a $z_c$-ideal of $\mathcal{R}_cL$  iff $Q=\Coz^\leftarrow[\Coz[Q]]$.
\end{enumerate}

\begin{lemma}
	For any $I\in\beta_0L$, the ideal ${\bf M}^I_c$ is maximal  iff $I$ is a prime element of $\beta_0L$.
\end{lemma}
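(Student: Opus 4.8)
The plan is to reduce the maximality of ${\bf M}^I_c$ to a question about the bounded distributive lattice $\Coz_cL$, exploiting the three facts recorded immediately before the lemma. Since ${\bf M}^I_c$ is a $z_c$-ideal (Example \ref{mo}), fact (3) gives ${\bf M}^I_c=\Coz^\leftarrow[\Coz[{\bf M}^I_c]]$, and combining this with facts (1) and (2) shows that ${\bf M}^I_c$ is maximal in $\mathcal{R}_cL$ if and only if $J:=\Coz[{\bf M}^I_c]$ is a maximal ideal of $\Coz_cL$. Unwinding the definition of ${\bf M}^I_c$ and using that every element of $\Coz_cL$ is $\coz\varphi$ for some $\varphi\in\mathcal{R}_cL$, one identifies $J=\{s\in\Coz_cL \mid r_0(s)\subseteq I\}$; and since $r_0(a)=\downarrow\! a\cap BL$ and $I$ is down-closed, $J\cap BL=I$. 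Thus the lemma reduces to the claim: $J$ is maximal in $\Coz_cL$ if and only if $I$ is a prime element of $\beta_0L$.

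Since $\beta_0L$ is the frame of ideals of the Boolean algebra $BL$, primeness of $I$ unfolds (via $\downarrow\! a\wedge\downarrow\! a^*=\{0_L\}\subseteq I$) to the conditions ``$I\neq 1_{\beta_0L}$, and $a\in I$ or $a^*\in I$ for every $a\in BL$''. I would then prove the two directions as follows. If $J$ is maximal then it is proper, so $1_L\notin J$, i.e. $I\neq 1_{\beta_0L}$; and given $a\in BL$ with $a\notin I$, we also have $a\notin J$ (because $J\cap BL=I$), so maximality of $J$ yields $1_L\leq j\vee a$ for some $j\in J$, hence $a^*\leq j$ and $a^*\in J\cap BL=I$. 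Then $I$ is prime: if $H\wedge K\subseteq I$ and $H\not\subseteq I$, pick $a\in H\setminus I$, so $a^*\in I$, and for each $b\in K$ both $b\wedge a\in H\cap K\subseteq I$ and $b\wedge a^*\leq a^*$ lie in $I$, so $b=(b\wedge a)\vee(b\wedge a^*)\in I$, giving $K\subseteq I$. Conversely, if $I$ is prime then $J$ is proper (as $I\neq 1_{\beta_0L}$), and for any $s\in\Coz_cL\setminus J$ there is $c\in BL$ with $c\leq s$ and $c\notin I$; then $c^*\in I\subseteq J$ and $s\vee c^*\geq c\vee c^*=1_L$, so the ideal of $\Coz_cL$ generated by $J\cup\{s\}$ is everything, proving $J$ maximal.

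There is no serious obstacle here; the only step requiring care is the bookkeeping that sets up the reduction, namely checking that $\Coz[{\bf M}^I_c]$ coincides with $\{s\in\Coz_cL \mid r_0(s)\subseteq I\}$ and really is an ideal of $\Coz_cL$ (the latter is built into the standing meaning of $\Coz[-]$, the former is immediate from ${\bf M}^I_c=\{\alpha\in\mathcal{R}_cL\mid r_0(\coz\alpha)\subseteq I\}$), and translating primeness in $\beta_0L$ into the complemented-element dichotomy above. Once these are in place the argument is routine Boolean manipulation inside $BL\subseteq\Coz_cL$; in particular it uses no regularity or zero-dimensionality hypothesis on $L$, consistent with the statement being for an arbitrary frame.
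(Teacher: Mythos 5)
Your proof is correct, and it differs from the paper's in a worthwhile way, mainly in the ``only if'' direction and in how primeness of $I$ is handled. The paper proves $(\Leftarrow)$ by the same reduction you use — since ${\bf M}^I_c$ is a $z_c$-ideal, it suffices that $\Coz[{\bf M}^I_c]$ be maximal in $\Coz_cL$ — but it gets the needed cozero element by invoking that a prime $I$ is a \emph{maximal} element of the regular frame $\beta_0L$, so $r_0(\coz\varphi)\vee I=1_{\beta_0L}$ yields $a\in I$ with $a\vee\coz\varphi=1_L$ and hence $e_a\in{\bf M}^I_c$; for $(\Rightarrow)$ the paper argues order-theoretically in $\beta_0L$, using Lemma \ref{i}(2) (distinct $I$ give distinct ${\bf M}^I_c$) to conclude that $I$ is a maximal, hence prime, element. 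You instead route \emph{both} directions through the single lattice-level equivalence ``$\Coz[{\bf M}^I_c]$ is maximal in $\Coz_cL$ iff $I$ is prime in $\beta_0L$'', after unwinding primeness in $Id(BL)$ to the complement dichotomy $a\in I$ or $a^*\in I$ and observing $\Coz[{\bf M}^I_c]\cap BL=I$; the resulting computations (e.g. $a^*\le j$ from $j\vee a=1_L$, and $b=(b\wedge a)\vee(b\wedge a^*)$) are all sound. What your route buys is a more self-contained, uniform argument: it avoids Lemma \ref{i}(2) entirely and never needs the fact that primes of $\beta_0L$ coincide with its maximal elements, working instead with elementary Boolean manipulation in $BL$; what the paper's route buys is brevity in the forward direction, since Lemma \ref{i}(2) is already available. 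One caveat on your closing remark about hypotheses: both your argument and the paper's rest equally on Example \ref{mo} (that ${\bf M}^I_c$ is a $z_c$-ideal, whose closure under sums leans on Proposition \ref{aa}) and on the three facts preceding the lemma, so your proof does not actually weaken the standing assumptions beyond what those inputs already require.
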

\begin{proof} 
	$(\Rightarrow)$ Suppose $J\in\beta_0L$ such that $I\subseteq J$. Then ${\bf M}^I_c\subseteq{\bf M}^J_c$, which implies that
	${\bf M}^I_c={\bf M}^J_c$ or ${\bf M}^I_c=\mathcal{R}_cL={\bf M}^{\beta_0L}_c$ by maximality. Now, by lemma \ref{i}, the latter implies $I=\beta_0L$ or the former implies $I=J$.  Therefore, $I$ is a maximal element of $\beta_0L$ as required.

	$(\Leftarrow)$ We must show that ${\bf M}^I_c$ is a maximal ideal in the case where $I\in Pt(\beta_0L)$. Since ${\bf M}^I_c$ is a $z_c$-ideal, it is sufficient, based on the above observations, 
	to prove $\Coz[{\bf M}^I_c]$ is a maximal ideal of $\Coz_cL$. To see this, if $s\in\Coz_cL$ such that $s\vee\coz\alpha\ne1_L$ for any $\alpha\in{\bf M}^I_c$, we must prove that $s\in\Coz[{\bf M}^I_c]$. Choose $\varphi\in\mathcal{R}_cL$ such that $s=\coz\varphi$. To find a contradiction, assume that $\coz\varphi\notin\Coz[{\bf M}^I_c]$.
	This assumption implies that $\varphi\notin{\bf M}^I_c$, and so $r_0(\coz\varphi)\not\subseteq I$, 
	implying that $r_0(\coz\varphi)\vee I=1_{\beta_0L}$ since $I$ is a maximal element. Therefore, there exists $a\in I$ such that $a\vee\coz\varphi=1_L$.  This implies that $s\vee\coz  e_a= s\vee a=\coz\varphi\vee a=1_L$, which is a contradiction since, by lemma \ref{c1} (8), we can conclude that  $e_a\in{\bf M}^I_c$.
\end{proof}

The upcoming theorem is the frame version of \cite[Theorem 4.2]{akko}.
\begin{theorem} \label{m}
	A subset $Q$ of $\mathcal{R}_cL$ is a maximal ideal  iff there exists a unique prime element $I$ of $\beta_0L$ such that $Q={\bf M}^I_c$.
\end{theorem}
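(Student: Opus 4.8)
The plan is to combine the two immediately preceding results. By Corollary \ref{mc}, every maximal ideal $Q$ of $\mathcal{R}_cL$ is the contraction of a maximal ideal of $\mathcal{R}L$, and by the Gelfand--Kolmogoroff-type description available for $\mathcal{R}L$ (via the ${\bf M}^I$-ideals indexed by points of $\beta L$), together with the isomorphism used in the present setting, maximal ideals are ``concentrated'' at points. But more directly: the previous lemma shows that for $I\in Pt(\beta_0L)$ the ideal ${\bf M}^I_c$ is maximal, and conversely that maximality of ${\bf M}^I_c$ forces $I$ to be prime. So the two halves of Theorem \ref{m} split cleanly.

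For the ``if'' direction, suppose $Q={\bf M}^I_c$ with $I$ prime in $\beta_0L$. Then $Q$ is maximal by the previous lemma, and uniqueness of $I$ follows from Lemma \ref{i}(2): if also $Q={\bf M}^J_c$ with $J$ prime, then ${\bf M}^I_c={\bf M}^J_c$ gives $I=J$. For the ``only if'' direction, let $Q$ be an arbitrary maximal ideal of $\mathcal{R}_cL$. I would produce a candidate prime $I\in\beta_0L$ from $Q$ as follows: set $I=\bigvee\{r_0(\coz\alpha)\mid \alpha\in Q\}$, i.e. the ideal of $BL$ generated by all the sets $r_0(\coz\alpha)$ for $\alpha\in Q$. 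One checks $I\in\beta_0L$ (it is an ideal of $BL$ by construction and Lemma \ref{c1}(5) handles finite joins). Then I claim $Q\subseteq {\bf M}^I_c$: for $\alpha\in Q$ we have $r_0(\coz\alpha)\subseteq I$ by definition, so $\alpha\in {\bf M}^I_c$. Since $Q$ is maximal and ${\bf M}^I_c$ is proper (it is proper precisely when $I\ne 1_{\beta_0L}=\beta_0L$; if $1_L\in I$ then $1_L\leq a_1\vee\cdots\vee a_n$ with $a_i\in\bigcup r_0(\coz\alpha_i)$, forcing $\coz(\alpha_1\vee\cdots)=1_L$, hence a unit of $\mathcal{R}_cL$ lies in $Q$, contradiction), we get $Q={\bf M}^I_c$. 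Finally, ${\bf M}^I_c$ being maximal forces $I\in Pt(\beta_0L)$ by the ``only if'' half of the previous lemma, and uniqueness is again Lemma \ref{i}(2).

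I expect the main obstacle to be the verification that the constructed $I$ is genuinely an element of $\beta_0L$ and that $Q={\bf M}^I_c$ rather than merely $Q\subseteq {\bf M}^I_c$ --- specifically, confirming properness of ${\bf M}^I_c$, i.e. that $I\ne\beta_0L$, which is where the fact that $Q$ contains no unit is used. The argument hinges on the characterization of units of $\mathcal{R}_cL$ together with the observation (from Section \ref{3}, via the lemmas on cleanness and the fact $BL\subseteq\Coz_cL$) that $\coz\varphi=1_L$ implies $\varphi$ is a unit in $\mathcal{R}_cL$; alternatively one can route through $\Coz[Q]$ being a proper (indeed maximal) ideal of $\Coz_cL$, using observation (2) preceding the previous lemma, and then $I$ is essentially $r_0$ applied to $\Coz[Q]$. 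Once properness is in hand, maximality of $Q$ upgrades the inclusion to equality automatically, and the rest is bookkeeping with Lemmas \ref{c1} and \ref{i}.
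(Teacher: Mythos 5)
Your proof is correct and follows essentially the paper's own strategy: the easy direction and uniqueness are handled exactly as in the paper (the preceding lemma plus Lemma \ref{i}(2)), and for the hard direction you build the same element $J_Q=\bigvee\{r_0(\coz\alpha)\mid\alpha\in Q\}$ and rule out $J_Q=1_{\beta_0L}$ by producing a unit inside $Q$. The one genuine (small) difference is the endgame: the paper shows $J_Q\ne 1_{\beta_0L}$ via compactness of $\beta_0L$ and then chooses some prime element $I$ of $\beta_0L$ above $J_Q$ with $Q\subseteq{\bf M}^I_c$, whereas you obtain $Q={\bf M}^{J_Q}_c$ directly from properness and maximality and then deduce from the forward half of the preceding lemma that $J_Q$ itself is prime; this is a legitimate streamlining (the finite-join reduction is automatic from how joins in $Id(BL)$ are computed, so no separate appeal to compactness or to the existence of a prime above $J_Q$ is needed), and it has the bonus of identifying the unique prime concretely as $J_Q$, which is what the paper later exploits in Section \ref{8}. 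Two cosmetic repairs: your step ``$\coz(\alpha_1\vee\cdots)=1_L$'' should be replaced by $1_L=a_1\vee\cdots\vee a_n\le\coz\alpha_1\vee\cdots\vee\coz\alpha_n=\coz(\alpha_1^2+\cdots+\alpha_n^2)$, since a ring ideal need not contain lattice joins and the cozero of a join of functions need not be the join of the cozeros; and the fact that $\coz\varphi=1_L$ forces $\varphi$ to be a unit of $\mathcal{R}_cL$ is justified by Lemma \ref{cb} applied to $1\prec\!\!\prec1$ (or via $\Coz[Q]$ being proper), not by the cleanness lemmas.
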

\begin{proof} 
	The `only if'  part of the theorem can be readily derived from the previous lemma. To show the converse,
	assume that $Q$ is a maximal ideal of $\mathcal{R}_cL$. Let $J\in\beta_0L$ be defined as $J=\bigvee\{r_0(\coz\alpha) \mid \alpha\in Q\}$.
	If  $J=1_{\beta_0L}$, then  compactness of $\beta_0L$  would yield indices $n_1, \cdots, n_k$ such that 
	$r_0(\coz\alpha_{n_1})\vee\cdots\vee r_0(\coz\alpha_{n_k})=1_{\beta_0L}$. 
	For any $1\leq i\leq k$,  we have $r_0(\coz\alpha_{n_i})\leq r_0(\coz\alpha_{n_1}\vee\cdots\vee \coz\alpha_{n_k})$, which implies
\[
1_{\beta_0L}\leq r_0(\coz\alpha_{n_1}\vee\cdots\vee \coz\alpha_{n_k})=r_0(\coz\alpha_{n_1}^2\vee\cdots\vee \coz\alpha_{n_k}^2)=r_0\big(\coz\big(\alpha_{n_1}^2+\cdots\alpha_{n_k}^2\big)\big).
\]
	Putting $\delta=\alpha_{n_1}^2+\cdots\alpha_{n_k}^2$, we get $\delta\in Q$ for which $r_0(\coz\delta)=1_{\beta_0L}$. Thus, using part (2) of Lemma \ref{c1}, we obtain
	$\coz\delta=\bigvee r_0(\coz\delta)=1_L$. This demonstrates that the ideal $Q$ contains some invertible element, which is a contradiction. Therefore,  $J\ne 1_{\beta_0L}$, and so there exists a prime element $I$ of $\beta_0L$ such that $J\subseteq I$. If $\alpha\in Q$, then $r_0(\coz\alpha)\subseteq J\subseteq I$, which implies  $Q\subseteq {\bf M}^I_c$. The maximality of $Q$ implies that $Q={\bf M}^I_c$. Lastly,  Lemma \ref{i}(2) shows that such a $I$ is unique. 
\end{proof}	
Every maximal ideal of $\mathcal{R}L$ can be expressed as ${\bf M}^I$, where $I\in Pt(\beta L)$, while every maximal ideal of $\mathcal{R}_cL$ can be expressed as ${\bf M}_c^I$, where $I\in Pt(\beta_0L)$. With Corollary  \ref{mc} in mind, 
for any $I\in Pt(\beta_0L)$, there exists a point $\pi_I\in Pt(\beta L)$ such that ${\bf M}_c^I={\bf M}^{\pi_I}\cap\mathcal{R}_cL$. This implies that if $\delta\in\mathcal{R}_cL$, then $r_0(\coz\delta)\subseteq I$  iff $r(\coz\delta)\subseteq \pi_I$. It is important to note that if  the corresponding point $\pi_I\in Pt(\beta L)$ is not unique for some $I\in Pt(\beta_0L)$, we can always select a unique point $\pi_I\in Pt(\beta L)$ for each point $I\in Pt(\beta_0L)$. Consequently, we can represent  maximal ideals of $\mathcal{R}_cL$ as follows:

\begin{theorem}
	For any completely regular frame $L$, every maximal ideal of $\mathcal{R}_cL$ can be precisely represented as
	\[
	{\bf M}_c^I=\{\delta\in\mathcal{R}_cL \mid r(\coz\delta)\subseteq \pi_I \}, \qquad I\in Pt(\beta_0L).
	\]
	
\end{theorem}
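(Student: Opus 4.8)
The plan is to deduce this from Theorem \ref{m} together with the observation recorded just above the statement, so that the only point to verify is the identity
\[
{\bf M}_c^I=\{\delta\in\mathcal{R}_cL \mid r(\coz\delta)\subseteq \pi_I\}
\]
for each prime element $I$ of $\beta_0L$, with a suitably chosen $\pi_I\in Pt(\beta L)$.

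First I would fix $I\in Pt(\beta_0L)$. By the lemma just before Theorem \ref{m}, ${\bf M}_c^I$ is then a maximal ideal of $\mathcal{R}_cL$, so Corollary \ref{mc} provides a maximal ideal $M$ of $\mathcal{R}L$ with ${\bf M}_c^I=M\cap\mathcal{R}_cL$. Since every maximal ideal of $\mathcal{R}L$ is of the form ${\bf M}^J$ for some $J\in Pt(\beta L)$, I choose such a $J$ and set $\pi_I:=J$; if more than one $J$ works I fix one choice once and for all, obtaining a well-defined assignment $I\mapsto\pi_I$. Recalling that ${\bf M}^{\pi_I}=\{\alpha\in\mathcal{R}L \mid r(\coz\alpha)\subseteq\pi_I\}$, we then get
\[
{\bf M}_c^I={\bf M}^{\pi_I}\cap\mathcal{R}_cL=\{\alpha\in\mathcal{R}L \mid r(\coz\alpha)\subseteq\pi_I\}\cap\mathcal{R}_cL=\{\delta\in\mathcal{R}_cL \mid r(\coz\delta)\subseteq\pi_I\},
\]
which is the asserted description. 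In particular the conditions $r_0(\coz\delta)\subseteq I$ and $r(\coz\delta)\subseteq\pi_I$ agree on $\mathcal{R}_cL$, and since distinct $I$ give distinct ${\bf M}_c^I$ by Lemma \ref{i}(2), the assignment $I\mapsto\pi_I$ is automatically one-to-one.

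Finally, for an arbitrary maximal ideal $Q$ of $\mathcal{R}_cL$, Theorem \ref{m} yields a unique prime $I\in Pt(\beta_0L)$ with $Q={\bf M}_c^I$, and the displayed identity rewrites this as $Q=\{\delta\in\mathcal{R}_cL \mid r(\coz\delta)\subseteq\pi_I\}$; conversely every set of that form equals ${\bf M}_c^I$ for the corresponding $I\in Pt(\beta_0L)$, hence is maximal. The argument is essentially bookkeeping over results already established; the only mild subtlety is the possible non-uniqueness of $\pi_I$, which is dealt with — as already noted before the statement — simply by committing to one choice of $\pi_I$ for each $I\in Pt(\beta_0L)$.
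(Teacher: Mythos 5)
Your proposal is correct and follows essentially the same route as the paper, which obtains the statement from Theorem \ref{m} and Corollary \ref{mc} together with the known description of maximal ideals of $\mathcal{R}L$ as ${\bf M}^J$ for $J\in Pt(\beta L)$, fixing one choice of $\pi_I$ when it is not unique. The bookkeeping you carry out (including noting that $r_0(\coz\delta)\subseteq I$ iff $r(\coz\delta)\subseteq\pi_I$ on $\mathcal{R}_cL$, and that distinct $I$ give distinct ${\bf M}_c^I$) is exactly the content of the paper's preceding discussion.
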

 Now we present the frame version of McKnight's for $\mathcal{R}_cL$. 
\begin {prop}\label{m1}
	If $Q$ is an ideal of $\mathcal{R}_cL$, then there is $I\in\beta_0L$ such that ${\bf O}^I_c\subseteq Q\subseteq{\bf M}^I_c$.
\end {prop}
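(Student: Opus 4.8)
The plan is to associate to the ideal $Q$ the ideal $J=\bigvee\{r_0(\coz\alpha)\mid\alpha\in Q\}$ of $\beta_0L$, exactly as in the proof of Theorem \ref{m}, and then take $I$ to be a prime element of $\beta_0L$ lying above $J$ — but now we must be a little more careful, because $Q$ need not be maximal and $J$ could conceivably be all of $\beta_0L$. So the first task is to separate two cases. If $Q$ contains a unit, then $Q=\mathcal{R}_cL$ and we may simply take $I=1_{\beta_0L}$, for which ${\bf O}^{1_{\beta_0L}}_c\subseteq\mathcal{R}_cL={\bf M}^{1_{\beta_0L}}_c$ trivially. Otherwise $Q$ is proper. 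The key observation, proved verbatim as in Theorem \ref{m} using compactness of $\beta_0L$, Lemma \ref{c1}(2), and the identity $r_0$ commutes with the passage $\coz\alpha\mapsto\coz\alpha^2$, is that a proper $Q$ forces $J\ne 1_{\beta_0L}$: if $J$ were the top, finitely many $r_0(\coz\alpha_{n_i})$ would join to $1_{\beta_0L}$, and then $\delta=\alpha_{n_1}^2+\cdots+\alpha_{n_k}^2\in Q$ would satisfy $\coz\delta=\bigvee r_0(\coz\delta)=1_L$, making $\delta$ a unit in $Q$, a contradiction.

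Having $J\ne 1_{\beta_0L}$, choose any prime element $I$ of $\beta_0L$ with $J\subseteq I$ (possible since every proper element of a frame lies below a prime, $\beta_0L$ being compact, hence spatial in the relevant sense — or simply quote that $\beta_0L$ has enough points, or note that $J$ is contained in some maximal, hence prime, element). The inclusion $Q\subseteq{\bf M}^I_c$ is then immediate: for $\alpha\in Q$ we have $r_0(\coz\alpha)\subseteq J\subseteq I$, which is precisely the membership condition for ${\bf M}^I_c$.

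It remains to show ${\bf O}^I_c\subseteq Q$. By Theorem \ref{ig}, ${\bf O}^I_c$ is generated by the idempotents $E=\{e_a\mid a\in I\}$, so it suffices to show $e_a\in Q$ for every $a\in I$. Fix such an $a$; since $a\in I=J\supseteq$, wait — this is the subtle point and the main obstacle: membership $a\in I$ only tells us $r_0(\coz e_a)=r_0(a)\prec\!\!\prec I$, not that $e_a$ itself came from $Q$. The right approach is to go back to the definition $J=\bigvee\{r_0(\coz\alpha)\mid\alpha\in Q\}$ and observe that $a\in I$ does \emph{not} in general put $e_a$ in $Q$ — so in fact ${\bf O}^I_c\subseteq Q$ will need $I$ chosen more tightly, namely $I$ should be taken so that $I\subseteq\bigvee\{r_0(\coz\alpha)\mid\alpha\in Q\}$ is replaced by the requirement that every complemented element $a$ below some $\coz\alpha$ with $\coz\alpha\prec\!\!\prec\coz\delta$, $\delta\in Q$, actually lies in the support of $Q$. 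Concretely: if $a\in I$ with $\coz e_a=a\prec\!\!\prec b$ for some $b$ with $r_0(b)\subseteq$ the support, then since $a\in BL$ and $a\in I$, we have $r_0(a)\prec\!\!\prec I$ by Lemma \ref{c1}(8), hence $r_0(a)\prec\!\!\prec J$; picking $\delta\in Q$ with $r_0(a)\subseteq r_0(\coz\delta)$ after a compactness argument, we get $a\leq\coz\delta$ with $a\in BL$, so $a\prec\!\!\prec\coz\delta$, and Lemma \ref{cb} yields $\rho\in\mathcal{R}_cL$ with $e_a=\rho\delta\in Q$. Thus $E\subseteq Q$ and therefore ${\bf O}^I_c\subseteq Q$, completing the proof. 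The delicate part throughout is ensuring the compactness step genuinely produces a \emph{single} $\delta\in Q$ with $a\leq\coz\delta$ (using that finite sums of squares in $Q$ stay in $Q$ and have cozero the join of the individual cozeros), and that $I$ being prime — not merely proper — is what makes $r_0(a)\vee I=1_{\beta_0L}$ fail, keeping $a$ inside $I$ rather than escaping it.
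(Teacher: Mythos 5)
There is a genuine gap, and you in fact stumble onto it yourself mid-proof. Your plan imports the machinery of Theorem \ref{m}: form $J=\bigvee\{r_0(\coz\alpha)\mid\alpha\in Q\}$, argue $J\ne 1_{\beta_0L}$ for proper $Q$, and then pass to a \emph{prime} element $I\supseteq J$. But the statement does not ask for a prime $I$, and choosing $I$ strictly above $J$ destroys the inclusion ${\bf O}^I_c\subseteq Q$: for every $a\in I$ one has $e_a\in{\bf O}^I_c$ (since $\coz e_a=a\prec\!\!\prec a\in I$), yet nothing forces $e_a\in Q$ when $a\in I\setminus J$. Already $Q=\{\bf0\}$ is a counterexample to your construction: then $J=0_{\beta_0L}$, any point $I$ of $\beta_0L$ contains nonzero complemented elements $a$, and $e_a\in{\bf O}^I_c\setminus Q$. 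Your attempted patch hinges on the step ``$r_0(a)\prec\!\!\prec I$ by Lemma \ref{c1}(8), hence $r_0(a)\prec\!\!\prec J$,'' which is a non sequitur: being completely below the \emph{larger} element $I$ gives no information about the smaller element $J$, so the subsequent compactness argument producing $\delta\in Q$ with $a\le\coz\delta$ never gets off the ground.

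The repair is to discard the primality and the case split altogether and take $I=J$ itself, which is exactly what the paper does. Then $Q\subseteq{\bf M}^I_c$ is immediate, and if $\beta\in{\bf O}^I_c$ there is $\delta\in\mathcal{R}_cL$ with $\coz\beta\prec\!\!\prec\coz\delta\in I$; since $I$ is the ideal of $BL$ generated by the $r_0(\coz\alpha)$, $\alpha\in Q$, one gets $\coz\delta\le\coz\alpha_1\vee\cdots\vee\coz\alpha_n=\coz\varphi$ with $\varphi=\alpha_1^2+\cdots+\alpha_n^2\in Q$, and Lemma \ref{cb} yields $\rho$ with $\beta=\rho\varphi\in Q$. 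This is precisely the finite-sum-of-squares and Lemma \ref{cb} mechanism you had in hand; the only error was aiming it at a prime $I$ above $J$ instead of at $J$ itself. (The paper's remark after the proposition confirms that $I$ is not claimed unique or prime; primality only enters later, for prime ideals $P$.)
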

\begin{proof}
	Define $I$ as the supremum of the set $\{r_0(\coz\alpha) \mid \alpha\in Q\}$, that is, $I=\bigvee\{r_0(\coz\alpha) \mid \alpha\in Q\}$.
	Then it follows immediately that $Q\subseteq{\bf M}^I_c$. To show the other inclusion, suppose $\beta\in {\bf O}^I_c$.
	Take $\delta\in\mathcal{R}_cL$ such that $\coz\delta\in I$ and $\coz\beta\prec\!\!\prec\coz\delta$. This shows that there exists $\alpha_1, \ldots , \alpha_n\in Q$ such that $\coz\beta\prec\!\!\prec\coz\delta\leq\bigvee_{i=1}^n\coz\alpha_i=\coz\varphi$, where $\varphi=\sum_{i=1}^n\alpha_i^2\in Q$
	So, by Lemma \ref{cb}, there is $\rho\in\mathcal{R}_cL$ such that $\beta=\rho\varphi$. 
	This shows that $\beta$ is an element of $Q$ since $Q$ is an ideal of $\mathcal{R}_cL$.
\end{proof}

It is important to emphasize that the ideals ${\bf O}^I_c$ and ${\bf M}^I_c$ mentioned in the foregoing proposition are not claimed to be unique. If $P$ is a prime ideal of $\mathcal{R}_cL$, then it is found that there exists a unique point $I\in\beta_0L$ such that ${\bf O}^I_c\subseteq P\subseteq{\bf M}^I_c$. We will now introduce the next lemma.
\begin{lemma}\label{om}
	The following statements hold for any frame $L$.
	\begin{enumerate}
		\item If $I\in Pt(\beta_0L)$, then the only maximal ideal containing ${\bf O}^I_c$ is ${\bf M}^I_c$.
		\item Suppose $I\in Pt(\beta_0L)$ and $\alpha\in\mathcal{R}_cL$. Then $\alpha\in{\bf O}^I_c$  iff $\alpha\beta=\textbf{0}$ for some $\beta\notin\ {\bf M}^I_c$.
		\item An ideal $Q$ in $\mathcal{R}_cL$ is contained in a unique maximal ideal ${\bf M}^I_c$ for some $I\in Pt(\beta_0L)$  iff ${\bf O}^I_c\subseteq Q$.
	\end{enumerate}
\end{lemma}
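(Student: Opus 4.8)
The plan is to reduce all three parts to one elementary remark: \emph{if $I,J\in\beta_0L$ and ${\bf O}^I_c\subseteq{\bf M}^J_c$, then $I\subseteq J$}. Indeed, for $a\in I$ we have $e_a\in{\bf O}^I_c\subseteq{\bf M}^J_c$ by Theorem \ref{ig}, hence $r_0(a)=r_0(\coz e_a)\subseteq J$; and since $a\in BL$ gives $a\in r_0(a)$, we get $a\in J$. Part (1) is then immediate: if $N$ is a maximal ideal with ${\bf O}^I_c\subseteq N$, then $N={\bf M}^J_c$ for some $J\in Pt(\beta_0L)$ by Theorem \ref{m}, the remark gives $I\subseteq J$, and since the primes of the regular frame $\beta_0L$ are exactly its maximal elements, $I=J$, so $N={\bf M}^I_c$. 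As ${\bf M}^I_c$ is itself maximal (by the lemma preceding Theorem \ref{m}) and contains ${\bf O}^I_c$ (Example \ref{mo}), it is the only maximal ideal over ${\bf O}^I_c$.

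For (2), in the forward direction I would use the form ${\bf O}^I_c=\{\gamma\in\mathcal{R}_cL\mid r_0(\coz\gamma)\prec\!\!\prec I\}$ recorded in Example \ref{mo} together with Lemma \ref{c1}(7): $\alpha\in{\bf O}^I_c$ produces $b\in I$ with $\coz\alpha\prec\!\!\prec b$, so $(\coz\alpha)^*\vee b=1_L$, and passing to pseudocomplements yields $b^*\wedge\coz\alpha=0_L$. Taking $\beta=e_{b^*}$ then does the job: $\coz(\alpha\beta)=\coz\alpha\wedge b^*=0_L$ forces $\alpha\beta={\bf0}$ since $\mathcal{R}_cL$ is reduced, while $b^*\notin I$ (otherwise $1_L=b\vee b^*\in I$, contradicting primeness), so $r_0(\coz\beta)=r_0(b^*)\not\subseteq I$, i.e.\ $\beta\notin{\bf M}^I_c$. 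For the converse, from $\alpha\beta={\bf0}$ and $\beta\notin{\bf M}^I_c$ I have $\coz\alpha\wedge\coz\beta=0_L$ and $r_0(\coz\beta)\not\subseteq I$; since $I$ is a maximal element of $\beta_0L$, $r_0(\coz\beta)\vee I=1_{\beta_0L}=BL$, so there are $c\in r_0(\coz\beta)$ and $d\in I$ with $c\vee d=1_L$, whence $c^*\le d$ and so $c^*\in I$. Now $\coz\alpha\le(\coz\beta)^*\le c^*=\coz e_{c^*}$ with $e_{c^*}\in{\bf O}^I_c$ (Theorem \ref{ig}), and as $(\coz e_{c^*})^*=c^{**}\le(\coz\alpha)^*$ and ${\bf O}^I_c$ is a $d_c$-ideal (Example \ref{mo}), Proposition \ref{d} gives $\alpha\in{\bf O}^I_c$.

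Part (3) I would state for a proper ideal $Q$. The easy direction: if ${\bf O}^I_c\subseteq Q$ with $I\in Pt(\beta_0L)$, then every maximal ideal containing $Q$ contains ${\bf O}^I_c$, hence equals ${\bf M}^I_c$ by (1), so --- $Q$ lying in at least one maximal ideal --- ${\bf M}^I_c$ is the unique one. The converse is where the real work sits, and I expect the main obstacle to be converting the bare hypothesis ``$Q$ lies in a unique maximal ideal'' into something concrete; the decisive observation is that this hypothesis makes $\mathcal{R}_cL/Q$ a \emph{local} ring, hence one with no idempotent other than $\bar{\bf0}$ and $\bar{\bf1}$. Since ${\bf1}-e_a=e_{a^*}$ for each $a\in BL$, this forces $e_a\in Q$ or $e_{a^*}\in Q$ for every $a\in BL$. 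Writing the unique maximal ideal containing $Q$ as ${\bf M}^I_c$ with $I\in Pt(\beta_0L)$ (Theorem \ref{m}), the option $e_{a^*}\in Q\subseteq{\bf M}^I_c$ is impossible when $a\in I$: it would give $a^*\in r_0(a^*)=r_0(\coz e_{a^*})\subseteq I$, hence $1_L=a\vee a^*\in I$, contradicting primeness. So $e_a\in Q$ for all $a\in I$, and Theorem \ref{ig} yields ${\bf O}^I_c\subseteq Q$.
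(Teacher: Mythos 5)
Your argument is correct, and parts (1) and (2) essentially track the paper's own proof: (1) is the same maximality argument via Theorem \ref{m} and $e_a\in{\bf O}^I_c\subseteq{\bf M}^J_c$; in (2) you only vary the details --- the paper produces the annihilator from Corollary \ref{o}, writing $\alpha=e\delta$ and taking $\beta=e-{\bf1}$, whereas you build $\beta=e_{b^*}$ from a witness $b\in I$ with $\coz\alpha\prec\!\!\prec b$, and in the converse the paper finishes directly with $\coz\alpha\le(\coz\beta)^*\prec\!\!\prec b\in I$, while you detour through the $d_c$-ideal property and Proposition \ref{d}; both are sound. The genuine divergence is in (3): the paper disposes of it with ``follows from (1) and (2)'', the intended argument being the classical one --- given $\alpha\in{\bf O}^I_c$, take $\beta\notin{\bf M}^I_c$ with $\alpha\beta={\bf0}$ by (2); since ${\bf M}^I_c$ is the only maximal ideal over $Q$ and $\beta\notin{\bf M}^I_c$, the ideal generated by $Q$ and $\beta$ is all of $\mathcal{R}_cL$, so ${\bf1}=q+\gamma\beta$ and $\alpha=\alpha q\in Q$. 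You instead exploit that $\mathcal{R}_cL/Q$ is local, hence has only trivial idempotents, and then invoke Theorem \ref{ig}; this is valid and bypasses (2) entirely, at the cost of the auxiliary identity ${\bf1}-e_a=e_{a^*}$ (true, since idempotents of $\mathcal{R}L$ are determined by their cozero, but worth a line) and of making explicit the properness of $Q$, which the paper leaves implicit. The paper's route is shorter and stays inside the machinery already set up; yours makes the structural reason (no nontrivial idempotents modulo $Q$) visible and shows (3) needs only Theorem \ref{ig} and Theorem \ref{m}.
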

\begin{proof}
	(1) It is clear that ${\bf O}^I_c\subseteq{\bf M}^I_c$. Suppose $Q$ is a maximal ideal of $\mathcal{R}_cL$ containing ${\bf O}^I_c$. Then, by Theorem \ref{m}, there exists a unique prime element $J$ of $\beta_0L$ such that $Q={\bf M}^J_c$. 
	We only need to demonstrate that $I=J$. Let $a\in I$. 
	Then $  e_a\in {\bf O}^I_c\subseteq{\bf M}^J_c$, showing that $r_0(a)=r_0(\coz  e_a)\subseteq J$. 
	Since $I=\bigvee\{r_0(x) \mid x\in I\}$, it follows that $I\subseteq J$, and so $I=J$ by maximality.
	
	(2) If  $\alpha\in{\bf O}^I_c$, then, by Corollary \ref{o}, there exists an idempotent $e\in{\bf O}^I_c$ and $\delta\in\mathcal{R}_cL$ such that $\alpha=e\delta$. Putting $\beta=e-{\bf 1}$, then we have $$\alpha\beta=e\alpha-\alpha=e^2\delta-\alpha=e\delta-\alpha=\alpha-\alpha={\bf0},$$
	and $e\in{\bf O}^I_c\subseteq{\bf M}^I_c$ implies that $\beta\notin{\bf M}^I_c$.
	
	Conversely, If $\alpha\beta={\bf0}$, then $\coz\alpha\wedge\coz\beta=0$. Since $\beta\notin{\bf M}^I_c$, we can conclude that $ r_0(\coz\beta)\not\subseteq I$. This in turn implies that $ r_0(\coz\beta)\vee I=1_{\beta_0L}$ because $I$ is a maximal element of $\beta_0L$. So, there exists $b\in I$ such that $\coz\beta\vee b=1$, 
	which shows that $(\coz\beta)^*\prec\!\!\prec b$. Consequently, we obtain $\coz\alpha\leq(\coz\beta)^*\prec\!\!\prec b\in I$, showing that $\alpha\in {\bf O}^I_c$.
	
	(3) Follows directly from (1) and (2).
\end{proof}

\begin {prop}
	Let $P$ be a prime ideal of $\mathcal{R}_cL$. Then there exists a unique prime element $I$ of $\beta_0L$ such that ${\bf O}^I_c\subseteq P\subseteq{\bf M}^I_c$.
\end {prop}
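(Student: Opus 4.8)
The plan is to produce the prime element $I$ by the same recipe as in Proposition~\ref{m1} and then to use the primeness of $P$ to upgrade ``$I\in\beta_0L$'' to ``$I\in Pt(\beta_0L)$''. So I set $I=\bigvee\{r_0(\coz\alpha)\mid\alpha\in P\}$; this is precisely the element produced in the proof of Proposition~\ref{m1} with $Q=P$, and that proof already delivers ${\bf O}^I_c\subseteq P\subseteq{\bf M}^I_c$. First I would note $I\ne 1_{\beta_0L}$: if $1_L\in I$, compactness of $\beta_0L$ yields $\alpha_1,\dots,\alpha_n\in P$ with $1_L\le\coz\alpha_1\vee\cdots\vee\coz\alpha_n=\coz\varphi$ for $\varphi=\sum_{i=1}^n\alpha_i^2\in P$, so $\coz\varphi=1_L$ and $\varphi$ is a unit of $\mathcal{R}_cL$ lying in the proper ideal $P$, a contradiction.

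The core of the proof is to show $I$ is a maximal element of $\beta_0L$; since $\beta_0L$ is zero-dimensional, hence regular, a maximal element is a point, so this gives $I\in Pt(\beta_0L)$. Suppose $I$ is not maximal; as $I\ne 1_{\beta_0L}$ there is $J\in\beta_0L$ with $I\subsetneq J\subsetneq 1_{\beta_0L}$, and I choose $c\in J\setminus I$, so $c\in BL$. Then $e_c\in\mathcal{R}_cL$ is idempotent, whence $e_c({\bf1}-e_c)={\bf0}\in P$ and primeness of $P$ forces $e_c\in P$ or ${\bf1}-e_c\in P$. If $e_c\in P$, then $r_0(c)=r_0(\coz e_c)\subseteq I$, and since $c\in r_0(c)$ this contradicts $c\notin I$. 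If ${\bf1}-e_c\in P$, then from $\coz({\bf1}-e_c)=\coz(e_c-{\bf1})=c^*$ we get $r_0(c^*)=r_0(\coz({\bf1}-e_c))\subseteq I\subseteq J$, so $c^*\in J$; together with $c\in J$ this gives $c\vee c^*=1_L\in J$, contradicting that $J$ is proper. Hence $I$ is maximal.

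Uniqueness is then easy: if $I'\in Pt(\beta_0L)$ also satisfies ${\bf O}^{I'}_c\subseteq P\subseteq{\bf M}^{I'}_c$, then $P\subseteq{\bf M}^{I'}_c$ gives $r_0(\coz\alpha)\subseteq I'$ for every $\alpha\in P$, so $I=\bigvee\{r_0(\coz\alpha)\mid\alpha\in P\}\subseteq I'$; since $I$ is a maximal element of $\beta_0L$ and $I'\ne 1_{\beta_0L}$, this forces $I=I'$.

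I expect the maximality-of-$I$ step to be the one real obstacle: it is the only place primeness of $P$ is genuinely used, and the work is in translating the ring-theoretic dichotomy on the idempotent $e_c$ back into $\beta_0L$ via the properties of $r_0$ from Lemma~\ref{c1} and the identity $\coz(e_c-{\bf1})=c^*$. This route needs no appeal to the Gelfand property of $\mathcal{R}_cL$; alternatively one could argue from Lemma~\ref{om}(3) once one observes that in the clean ring $\mathcal{R}_cL$ (Theorem~\ref{ac}) every prime ideal lies beneath a unique maximal ideal.
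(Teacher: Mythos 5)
Your proof is correct, but it travels a genuinely different road from the paper's. The paper starts from the maximal-ideal description (Theorem \ref{m}): since $P$ lies in some maximal ideal, $P\subseteq{\bf M}^I_c$ for a prime element $I$, and then it uses primeness of $P$ only once, via Lemma \ref{om}(2) --- every $\alpha\in{\bf O}^I_c$ kills some $\beta\notin{\bf M}^I_c\supseteq P$, so $\alpha\beta={\bf0}\in P$ forces $\alpha\in P$ --- with uniqueness coming from Lemma \ref{om}(1). You instead build $I=\bigvee\{r_0(\coz\alpha)\mid\alpha\in P\}$ by the McKnight recipe of Proposition \ref{m1}, which hands you both containments at once, and then spend the primeness of $P$ at the frame level: the dichotomy $e_c\in P$ or ${\bf1}-e_c\in P$ for $c\in BL$, together with $\coz e_c=c$, $\coz(e_c-{\bf1})=c^*$ and the behaviour of $r_0$, shows that no proper $J$ of $\beta_0L$ can lie strictly above $I$, so $I$ is maximal (hence prime --- note that maximal elements are prime in \emph{any} frame by distributivity, so your appeal to regularity is not actually needed, and in any case you only use that direction), and uniqueness then drops out of maximality of $I$. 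What each approach buys: the paper's argument is shorter given the machinery already in place and identifies ${\bf M}^I_c$ explicitly as the unique maximal ideal over $P$; yours bypasses Theorem \ref{m} and Lemma \ref{om} altogether, needing only Proposition \ref{m1}, Lemma \ref{c1} and the idempotents $e_a$, so it is more self-contained and makes visible exactly how primeness of the ideal translates into maximality of the corresponding point of $\beta_0L$. The only blemishes are cosmetic: the properness of $I$ needs no compactness (membership in a join of ideals of $BL$ already means lying below a finite join of the $\coz\alpha_i$, after which the argument with $\varphi=\sum\alpha_i^2$ runs as you say), and the regularity remark noted above.
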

\begin{proof}
	Because any prime ideal is contained in some maximal ideal, Theorem \ref{m} shows that $P\subseteq {\bf M}^I_c$ for some prime element $I$ of $\beta_0L$. Our objective is to demonstrate that ${\bf O}^I_c\subseteq P$. If we take $\alpha\in {\bf O}^I_c$, then, by Lemma \ref{om}, there exists $\beta\in\mathcal{R}_cL$ such that $\beta\notin{\bf M}^I_c$ and $\alpha\beta=\textbf{0}$. 
	Therefore, $\alpha\in P$ since $P$ is a prime ideal and $\beta\notin P$. So, based on the preceding lemma,
	${\bf M}^I_c$ is the unique maximal ideal containing $P$.
\end{proof}
\section{Describing fixed maximal ideals in $\mathcal{R}_cL$}\label{6}
To begin, recall from \cite{du2} that an ideal $Q$ of $\mathcal{R}L$ or $\mathcal{R}^*L$ is defined to be \emph{  fixed} if $\bigvee\limits_{\alpha\in Q}\coz\alpha<1$. Based on this definition, we can  introduce the following definition.
\begin{definition}
	An ideal $Q$ of $\mathcal{R}_cL$ or $\mathcal{R}_c^*L$ is  fixed if $\bigvee\limits_{\alpha\in Q}\coz\alpha<1$.	
\end{definition}	

As proved in \cite[Proposition 3.3]{du2}, fixed maximal ideals of $\mathcal{R}L$ correspond one-to-one with the points of $L$, and they represent precisely the sets
\[
{\bf M}_p=\{\alpha\in\mathcal{R}L \mid \coz\alpha\leq p\}, \qquad p\in Pt(L).
\]
We will now clear that the situation in $\mathcal{R}_cL$ is comparable.
Let us present the following types of fixed ideals of $\mathcal{R}_cL$.

\begin{definition}
	For any $a\in L$ such that $a<1$, we define the subset ${\bf M}_{ca}$ of $\mathcal{R}_cL$ as
	\[
	{\bf M}_{ca}=\{\alpha\in\mathcal{R}_cL \mid \coz\alpha\leq a\}.
	\]
\end{definition}

Evidently, ${\bf M}_{ca}$ is an ideal, and indeed, ${\bf M}_{ca}$ is fixed. Additionally, for any zero-dimensional frame, ${\bf M}_{ca}={\bf M}^{r_0(a)}_c$. Consequently, since $r_0(a)=r_0(b)$ implies $a=b$, according to Lemma \ref{i}, it follows that ${\bf M}_{ca}={\bf M}_{cb}$  iff $a=b$.  The first part of Lemma \ref{i}  helps us identify all fixed maximal ideals of $\mathcal{R}_cL$ in the following manner:
\begin{lemma}\label{f}
	If $I$ belongs to $\beta_0L$, then the ideal ${\bf M}^I_c$ is fixed  iff $\bigvee I< 1$.
\end{lemma}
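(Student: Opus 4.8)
The plan is to unwind the definition of ``fixed'' and compare $\bigvee_{\alpha\in{\bf M}^I_c}\coz\alpha$ with $\bigvee I$, using the machinery already assembled in Lemma \ref{i}(1) and Lemma \ref{c1}. By Lemma \ref{i}(1) we know $\bigvee\Coz[{\bf M}^I_c]=\bigvee I$, so the quantity $\bigvee_{\alpha\in{\bf M}^I_c}\coz\alpha$ that appears in the definition of fixedness is literally $\bigvee I$. Hence the statement ``${\bf M}^I_c$ is fixed iff $\bigvee I<1$'' should follow almost immediately once one observes that $\bigvee_{\alpha\in Q}\coz\alpha=\bigvee\Coz[Q]$ for any ideal $Q$.

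Concretely, first I would record that $\bigvee_{\alpha\in{\bf M}^I_c}\coz\alpha = \bigvee\{\coz\alpha \mid \alpha\in{\bf M}^I_c\} = \bigvee\Coz[{\bf M}^I_c]$, which is just the definition of $\Coz[{\bf M}^I_c]$. Then I would invoke Lemma \ref{i}(1), which gives $\bigvee\Coz[{\bf M}^I_c]=\bigvee I$. Combining the two, $\bigvee_{\alpha\in{\bf M}^I_c}\coz\alpha=\bigvee I$. Therefore ${\bf M}^I_c$ is fixed, i.e. $\bigvee_{\alpha\in{\bf M}^I_c}\coz\alpha<1$, precisely when $\bigvee I<1$. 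This dispatches both directions of the equivalence at once.

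Since there is essentially no obstacle here, the only thing to be careful about is making sure that the join defining fixedness is taken in $L$ and matches the join $\bigvee I$ (also taken in $L$, as $I$ is an ideal of $BL$ hence a subset of $L$); both are joins of elements of $L$, and Lemma \ref{i}(1) is stated for exactly this situation, so no delicacy arises. One could also give a direct self-contained argument without citing Lemma \ref{i}(1): if $\bigvee I<1$, then for each $\alpha\in{\bf M}^I_c$ we have $r_0(\coz\alpha)\subseteq I$, whence $\coz\alpha=\bigvee r_0(\coz\alpha)\le\bigvee I<1$ by Lemma \ref{c1}(3) (recall $\coz\alpha\in\Coz_cL$ so $\bigvee r_0(\coz\alpha)=\coz\alpha$), so the supremum over $\alpha\in{\bf M}^I_c$ is $\le\bigvee I<1$; conversely if $\bigvee I=1$ then for each $a\in I$ we have $e_a\in{\bf O}^I_c\subseteq{\bf M}^I_c$ with $\coz e_a=a$, so $\bigvee_{\alpha\in{\bf M}^I_c}\coz\alpha\ge\bigvee_{a\in I}a=\bigvee I=1$, i.e.\ ${\bf M}^I_c$ is not fixed. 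I would present the short argument via Lemma \ref{i}(1), as the text signals (``The first part of Lemma \ref{i} helps us identify all fixed maximal ideals'').
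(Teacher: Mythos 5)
Your proposal is correct and follows essentially the same route as the paper: the paper's proof also just invokes Lemma \ref{i}(1) to identify $\bigvee_{\alpha\in{\bf M}^I_c}\coz\alpha$ with $\bigvee I$ and reads off the equivalence. The extra self-contained argument you sketch is fine as well (though the identity $\bigvee r_0(\coz\alpha)=\coz\alpha$ is Lemma \ref{c1}(2), not (3)).
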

\begin{proof}
	For any $I\in\beta_0L$, by Lemma \ref{i}, we have  $\bigvee\Coz[{\bf M}^I_c]=\bigvee I$. It follows that the ideal  ${\bf M}^I_c$ is fixed  iff $\bigvee I< 1$.
\end{proof}
\begin{lemma} \label{p}
	For any zero-dimensional frame $L$, if $I\in Pt(\beta_0L)$ such that $\bigvee I< 1$, then $\bigvee I\in Pt(L)$.
\end{lemma}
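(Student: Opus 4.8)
The plan is to verify directly that $p:=\bigvee I$ meets the defining conditions of a prime element of $L$. Since $\bigvee I<1$ by hypothesis we have $p\neq 1_L$, so it remains to check that whenever $a,b\in L$ satisfy $a\wedge b\leq p$, then $a\leq p$ or $b\leq p$. I would argue by contradiction: assume $a\wedge b\leq p$ while $a\not\leq p$ and $b\not\leq p$. Since $L$ is zero-dimensional, $a=\bigvee\{x\in BL:x\leq a\}$ and likewise for $b$, so I can choose complemented elements $c,d\in BL$ with $c\leq a$, $d\leq b$, $c\not\leq p$ and $d\not\leq p$.

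The key step is to observe that the prime element $I$ of $\beta_0L=Id(BL)$ decides every complemented element, i.e. for each $e\in BL$ we have $e\in I$ or $e^*\in I$. Indeed, the principal ideals $\downarrow e$ and $\downarrow e^*$ are elements of $\beta_0L$, and $\downarrow e\wedge\downarrow e^*=\downarrow(e\wedge e^*)=\{0_L\}=0_{\beta_0L}\leq I$ (meets in $Id(BL)$ being intersections); primeness of $I$ then forces $\downarrow e\leq I$ or $\downarrow e^*\leq I$, that is, $e\in I$ or $e^*\in I$. Applying this to $c$: since $c\not\leq p=\bigvee I$ we cannot have $c\in I$, so $c^*\in I$, whence $c^*\leq p$. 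In the same way $d^*\leq p$.

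To conclude, I would use that in the Boolean algebra $BL$ the element $c\wedge d$ is complemented with $(c\wedge d)^*=c^*\vee d^*$; this is a one-line distributivity check, since $(c\wedge d)\wedge(c^*\vee d^*)=0$ and $(c\wedge d)\vee(c^*\vee d^*)=(c\vee c^*\vee d^*)\wedge(d\vee c^*\vee d^*)=1$. From $c^*\leq p$ and $d^*\leq p$ we get $(c\wedge d)^*\leq p$, and from $c\wedge d\leq a\wedge b\leq p$ we get $c\wedge d\leq p$; hence $1_L=(c\wedge d)\vee(c\wedge d)^*\leq p$, contradicting $p<1$. This contradiction shows $p$ is prime, so $\bigvee I\in Pt(L)$.

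I expect the only mildly delicate point to be the middle step — transferring the primeness of $I$ as a frame element of $\beta_0L$ into a statement about membership of $c$ and $c^*$ in $I$; the rest is routine Boolean-algebra and frame arithmetic. (Alternatively one could route this through Lemma \ref{c1}(8), using that $r_0(c)\not\prec\!\!\prec I$ together with maximality of $I$ in $\beta_0L$ gives $r_0(c)\vee I=1_{\beta_0L}$ and hence $c^*\in I$; but the direct argument above seems shortest.)
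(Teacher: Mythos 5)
Your proof is correct, but it follows a genuinely different route from the paper's. The paper argues directly: assuming $a\wedge b=\bigvee I$, it uses the adjunction $\bigvee\dashv r_0$ to get $I\subseteq r_0(a)\wedge r_0(b)$, then invokes the fact that points of the regular frame $\beta_0L$ are maximal elements to force $I=r_0(a)$ (resp.\ $r_0(b)$), whence $a=\bigvee r_0(a)=\bigvee I$ by zero-dimensionality. You instead verify the definition of primeness in its $\leq$ form by contradiction: zero-dimensionality supplies complemented witnesses $c\leq a$, $d\leq b$ with $c,d\not\leq p=\bigvee I$; primeness of $I$ in $\beta_0L$, applied to the two elements $r_0(e)=\,\downarrow\!e\cap BL$ and $r_0(e^*)$ whose meet is $0_{\beta_0L}$, yields the dichotomy $e\in I$ or $e^*\in I$ for every $e\in BL$ (so $I$ behaves as a prime, hence maximal, ideal of the Boolean algebra $BL$); this gives $c^*,d^*\leq p$, and the Boolean identity $(c\wedge d)\vee c^*\vee d^*=1$ then forces $1\leq p$, a contradiction. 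Each approach has its merits: yours uses only primeness of $I$ (no appeal to the regularity-based identification of primes with maximal elements) and makes explicit the ultrafilter-style behaviour of points of $\beta_0L$ on $BL$, which is a useful picture elsewhere in the paper; the paper's argument is shorter, avoids contradiction, and in fact establishes the stronger meet-irreducibility of $\bigvee I$. All the delicate steps in your write-up check out: the principal ideals of $BL$ are indeed elements of $\beta_0L$ with intersection $\{0_L\}$, $c\in I$ would contradict $c\not\leq\bigvee I$, and $c^*\vee d^*$ is the complement of $c\wedge d$ in $BL$.
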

\begin{proof}
	Suppose $a\wedge b=\bigvee I$, where $a,b\in L$. Since $r_0$ is a right adjoint to $\bigvee$, it follows that $I\subseteq r_0(a\wedge b)=r_0(a)\wedge r_0(b)$.
	The assumption $\bigvee I< 1$ leads to the conclusion that $r_0(a\wedge b)\ne1_{\beta_0L}$, and so, we can conclude that $I=r_0(a)\wedge r_0(b)$ due to the maximality of $I$. 
	Consequently, we have $I=r_0(a)$ and $I= r_0(b)$.  Therefore,  $a=\bigvee I$, and similarly, $b=\bigvee I$, showing the desired result.
\end{proof}
\begin{theorem}\label{ff}
	For any zero-dimensional frame $L$, the fixed maximal ideals of $\mathcal{R}_cL$ are precisely the ideals ${\bf M}_{cp}$, where $p\in Pt(L)$. 
	These ideals are distinct for different points in the frame.
\end{theorem}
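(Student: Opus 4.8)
The plan is to combine the structural description of maximal ideals from Theorem~\ref{m} with the point-identification lemmas just proved. First I would establish that each ${\bf M}_{cp}$ with $p\in Pt(L)$ is a fixed maximal ideal. That it is fixed is immediate from $\bigvee\limits_{\alpha\in{\bf M}_{cp}}\coz\alpha\le p<1$. For maximality, recall the remark made right after the definition of ${\bf M}_{ca}$: for a zero-dimensional frame ${\bf M}_{cp}={\bf M}^{r_0(p)}_c$, so by the lemma characterizing when ${\bf M}^I_c$ is maximal (stated just before Theorem~\ref{m}) it suffices to check $r_0(p)\in Pt(\beta_0L)$. Since $p$ is prime in $L$ and $r_0$ is a right adjoint preserving meets, if $r_0(p)=I\wedge J$ with $I,J\in\beta_0L$ then applying $\bigvee$ gives $p=\bigvee I\wedge\bigvee J$ (using $\bigvee r_0(p)=p$ from Lemma~\ref{c1}(1)), whence $p=\bigvee I$ or $p=\bigvee J$ by primeness; combined with $I,J\subseteq r_0(p)$ this forces $I=r_0(p)$ or $J=r_0(p)$, so $r_0(p)$ is prime. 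Hence ${\bf M}_{cp}$ is a fixed maximal ideal.

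Conversely, I would take an arbitrary fixed maximal ideal $Q$ of $\mathcal{R}_cL$ and show it has the form ${\bf M}_{cp}$. By Theorem~\ref{m} there is a unique $I\in Pt(\beta_0L)$ with $Q={\bf M}^I_c$. By Lemma~\ref{f}, $Q$ being fixed means $\bigvee I<1$. Now Lemma~\ref{p} applies: $\bigvee I=:p\in Pt(L)$. It remains to identify ${\bf M}^I_c$ with ${\bf M}_{cp}$. For a zero-dimensional frame we have ${\bf M}_{cp}={\bf M}^{r_0(p)}_c$, so this amounts to showing $I=r_0(p)$. One inclusion is automatic since $r_0$ is right adjoint to $\bigvee$ and $\bigvee I=p$ gives $I\subseteq r_0(p)$; the reverse follows because $r_0(p)\ne 1_{\beta_0L}$ (as $\bigvee r_0(p)=p<1$ by Lemma~\ref{c1}(1)) and $I$ is maximal in $\beta_0L$, forcing $I=r_0(p)$.

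Finally, distinctness: if $p\ne q$ in $Pt(L)$ then $r_0(p)\ne r_0(q)$ (apply $\bigvee$ and use Lemma~\ref{c1}(1)), so by Lemma~\ref{i}(2) we get ${\bf M}^{r_0(p)}_c\ne{\bf M}^{r_0(q)}_c$, i.e.\ ${\bf M}_{cp}\ne{\bf M}_{cq}$. Thus $p\mapsto{\bf M}_{cp}$ is a bijection between $Pt(L)$ and the fixed maximal ideals of $\mathcal{R}_cL$.

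I expect the only mildly delicate point to be the bookkeeping showing $r_0(p)$ is a prime element of $\beta_0L$ and that $I=r_0(p)$ when $\bigvee I=p<1$; both rest on the adjunction $\bigvee\dashv r_0$ together with $\bigvee r_0(a)=a$ for zero-dimensional $L$ (Lemma~\ref{c1}(1)) and the maximality of primes in the compact regular frame $\beta_0L$. Everything else is a direct assembly of the already-established lemmas, so there is no substantial obstacle.
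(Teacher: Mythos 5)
Your proposal is correct and follows essentially the same route as the paper: the converse direction assembles Theorem \ref{m}, Lemma \ref{f} and Lemma \ref{p}, and distinctness comes from Lemma \ref{i}(2) together with ${\bf M}_{ca}={\bf M}^{r_0(a)}_c$. The only differences are cosmetic: you spell out maximality of ${\bf M}_{cp}$ by checking that $r_0(p)$ is a prime element of $\beta_0L$ (a direction the paper simply calls clear), and you conclude via $I=r_0(p)$ using the adjunction, whereas the paper shows ${\bf M}^I_c\subseteq{\bf M}_{cp}$ directly and invokes maximality of the ideal ${\bf M}^I_c$.
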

\begin{proof}
	It is clear that ${\bf M}_{cp}$ is a fixed maximal ideal for any $p\in Pt(L)$.
	Conversely, suppose  $Q$ is a fixed maximal ideal of $\mathcal{R}_cL$. By using Theorem \ref{m} and Lemma \ref {f}, we can  conclude that $Q={\bf M}^I_c$ for some point $I\in\beta_0L$ where $\bigvee I< 1$.  Putting $p=\bigvee I$, then   $p\in Pt(L)$ by Lemma \ref{p}. Our objective is to demonstrate that ${\bf M}^I_c$  is equal to $M_{cp}$.
	If $\alpha\in{\bf M}^I_c$, then $r_0(\coz\alpha)\subseteq I$, which implies that $\coz\alpha=\bigvee r_0(\coz\alpha)\leq\bigvee I=p$. Consequently, we have ${\bf M}^I_c\subseteq M_{cp}$.  Since $M_{cp}$ is a proper ideal and ${\bf M}^I_c$ is a maximal ideal, we can conclude that $Q={\bf M}^I_c=M_{cp}$, thus completing the argument. Finally, according to Lemma \ref{i}(2), the latter part of the theorem is evident.
	
\end{proof}

If $1\ne a\in L$, then it is evident that $ {\bf M}_{ca}={\bf M}_{a}\cap\mathcal{R}_cL$, where ${\bf M}_{a}=\{\delta\in\mathcal{R}L \mid \coz\delta\leq a\}$.  By using this information along with  \cite[Proposition 4.5] {tes}, we can derive the following corollary from the previous theorem.

\begin  {corol}
	For any zero-dimensional frame $L$, the fixed maximal ideals of $\mathcal{R}_cL$ are exactly the sets $Ker(\mathcal{R}_c\xi)$ for the characters $\xi : L\to {\bf2}=\{1, 0\}$. They are different for different characters.
\end  {corol}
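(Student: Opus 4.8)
The plan is to read the corollary off Theorem~\ref{ff} through the standard dictionary between characters of $L$ and points of $L$. Recall first that a frame map $\xi\colon L\to{\bf2}$ is determined by the element $p_\xi=\bigvee\xi^{-1}(0)$: one checks routinely that $\xi(p_\xi)=0$, that $p_\xi\in Pt(L)$ (if $a\wedge b\le p_\xi$ then $\xi(a)\wedge\xi(b)=0$ in ${\bf2}$, so $\xi(a)=0$ or $\xi(b)=0$), that $p_\xi<1$ since $\xi(1)=1\neq0$, and that $\xi(a)=0$ exactly when $a\le p_\xi$; conversely each $p\in Pt(L)$ yields the character sending $a$ to $0$ when $a\le p$ and to $1$ otherwise. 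Thus $\xi\mapsto p_\xi$ is a bijection from the set of characters $L\to{\bf2}$ onto $Pt(L)$. This, together with the computation below, is the content of \cite[Proposition~4.5]{tes}.

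Next I would identify the kernel of $\mathcal{R}_c\xi$. Since ${\bf2}=\mathfrak{O}(\{*\})$, the isomorphism $\mathcal{R}_c(\mathfrak{O}X)\cong C_c(X)$ from \cite{tes} gives $\mathcal{R}_c{\bf2}\cong C_c(\{*\})=\mathbb{R}$, so $\mathcal{R}_c\xi\colon\mathcal{R}_cL\to\mathbb{R}$, $\alpha\mapsto\xi\circ\alpha$, is a ring homomorphism; it is onto because it sends the constant ${\bf r}$ to $r$, so $Ker(\mathcal{R}_c\xi)$ is a maximal ideal of $\mathcal{R}_cL$. For $\alpha\in\mathcal{R}_cL$ we have $\coz(\xi\circ\alpha)=\xi(\coz\alpha)$ (the cozero map commutes with frame maps), and $\mathcal{R}_c\xi(\alpha)={\bf0}$ precisely when $\coz(\xi\circ\alpha)=0_{\bf2}$, i.e.\ when $\xi(\coz\alpha)=0$, i.e.\ when $\coz\alpha\le p_\xi$. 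Therefore $Ker(\mathcal{R}_c\xi)={\bf M}_{cp_\xi}$; equivalently, this is the identity obtained by combining \cite[Proposition~4.5]{tes} with the displayed remark ${\bf M}_{ca}={\bf M}_a\cap\mathcal{R}_cL$.

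Finally I would assemble the pieces. By Theorem~\ref{ff}, for a zero-dimensional $L$ the fixed maximal ideals of $\mathcal{R}_cL$ are exactly the ideals ${\bf M}_{cp}$ with $p\in Pt(L)$, and these are pairwise distinct. Transporting this along the bijection $\xi\mapsto p_\xi$ and the identity $Ker(\mathcal{R}_c\xi)={\bf M}_{cp_\xi}$, the fixed maximal ideals of $\mathcal{R}_cL$ are precisely the ideals $Ker(\mathcal{R}_c\xi)$ as $\xi$ ranges over the characters $L\to{\bf2}$; and since distinct characters have distinct $p_\xi$, they yield distinct kernels, again by Theorem~\ref{ff} (or by Lemma~\ref{i}(2)).

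There is no genuine difficulty here: the corollary is little more than a restatement of Theorem~\ref{ff} via the correspondence $Pt(L)\leftrightarrow{\bf Frm}(L,{\bf2})$. The only steps needing a word of justification are the surjectivity of $\mathcal{R}_c\xi$ (which is what makes $Ker(\mathcal{R}_c\xi)$ a maximal, not merely prime, ideal) and the equality $Ker(\mathcal{R}_c\xi)={\bf M}_{cp_\xi}$; both follow at once from $\coz(\xi\circ\alpha)=\xi(\coz\alpha)$ and the description of $p_\xi$ recalled above.
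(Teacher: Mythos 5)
Your proposal is correct and follows essentially the same route as the paper: both derive the corollary from Theorem~\ref{ff} via the standard bijection between characters $\xi\colon L\to{\bf 2}$ and points of $L$, identifying $Ker(\mathcal{R}_c\xi)$ with ${\bf M}_{cp_\xi}$. The only difference is cosmetic: where the paper cites \cite[Proposition 4.5]{tes} together with the remark ${\bf M}_{ca}={\bf M}_a\cap\mathcal{R}_cL$, you prove the kernel identification directly from $\coz(\xi\circ\alpha)=\xi(\coz\alpha)$, which is a perfectly adequate substitute.
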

For any zero-dimensional frame $L$, these results provide a criterion for spatiality  in terms of  fixed maximal ideals of the ring $\mathcal{R}_cL$. As a reminder from Lemma 3.1 in \cite{du1}, a regular frame is spatial  iff every element is not the top is below some point of the frame. By applying this characterization, we can derive the upcoming corollary from the above results.
\begin  {corol}
	The following are equivalent for each zero-dimensional frame $L$.
	\begin{enumerate} 
		\item $L$ is spatial.
		\item Every fixed ideal of $\mathcal{R}_cL$ is contained in a fixed maximal ideal.
		\item  Every fixed ideal of $\mathcal{R}_cL$ is contained in $Ker(\mathcal{R}_c\xi)$ for some character $\xi : L\to {\bf2}=\{1, 0\}$.
	\end{enumerate}
\end  {corol}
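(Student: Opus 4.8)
The plan is to read the corollary off from Theorem \ref{ff}, the preceding corollary identifying the fixed maximal ideals of $\mathcal{R}_cL$ with the kernels $Ker(\mathcal{R}_c\xi)$, and the spatiality criterion \cite[Lemma 3.1]{du1}, which is available here because a zero-dimensional frame is in particular regular (each complemented $s$ satisfies $s\prec s$). The equivalence (2)$\Leftrightarrow$(3) is then immediate: by the preceding corollary the fixed maximal ideals of $\mathcal{R}_cL$ are exactly the ideals $Ker(\mathcal{R}_c\xi)$ with $\xi:L\to{\bf2}$ a character, so ``contained in a fixed maximal ideal'' and ``contained in some $Ker(\mathcal{R}_c\xi)$'' assert the same thing. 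Hence it suffices to prove (1)$\Leftrightarrow$(2).

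For (1)$\Rightarrow$(2) I would take an arbitrary fixed ideal $Q$ of $\mathcal{R}_cL$ and put $a=\bigvee_{\alpha\in Q}\coz\alpha$, so $a<1$ by the definition of fixedness. Spatiality together with \cite[Lemma 3.1]{du1} produces a prime element $p\in Pt(L)$ with $a\leq p$; then $\coz\alpha\leq a\leq p$ for every $\alpha\in Q$, that is, $Q\subseteq{\bf M}_{cp}$, and ${\bf M}_{cp}$ is a fixed maximal ideal by Theorem \ref{ff}.

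For (2)$\Rightarrow$(1) I would argue by contraposition. If $L$ is not spatial, then by \cite[Lemma 3.1]{du1} there is an element $a\in L$ with $a<1$ and $a\not\leq p$ for every $p\in Pt(L)$. The ideal ${\bf M}_{ca}$ is fixed, so by (2) it is contained in a fixed maximal ideal, which by Theorem \ref{ff} has the form ${\bf M}_{cp}$ for some $p\in Pt(L)$. For each $s\in BL$ with $s\leq a$ we have $e_s\in{\bf M}_{ca}\subseteq{\bf M}_{cp}$, whence $s=\coz e_s\leq p$; since $L$ is zero-dimensional, $a=\bigvee\{s\in BL\mid s\leq a\}\leq p$, contradicting the choice of $a$. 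Therefore $L$ is spatial.

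I do not anticipate a genuine obstacle: the proof is a short assembly of facts already in hand. The only points requiring a little care are verifying that \cite[Lemma 3.1]{du1} (stated for regular frames) applies in the zero-dimensional setting, observing that ${\bf M}_{cp}$ is a proper ideal so that the containments above are meaningful, and using zero-dimensionality through the idempotents $e_s$ with $s\in BL$ (rather than merely $\Coz_cL$) to recover $a$ from the members of ${\bf M}_{ca}$.
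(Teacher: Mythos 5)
Your proof is correct and follows exactly the route the paper intends (it leaves the corollary unproved, indicating it follows from Theorem \ref{ff}, the corollary on $Ker(\mathcal{R}_c\xi)$, and the spatiality criterion of \cite[Lemma 3.1]{du1}). Your assembly of these facts, including the contrapositive argument using the idempotents $e_s$ for $s\in BL$ with $s\leq a$, is precisely the derivation the paper has in mind.
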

\begin  {corol}
	For any zero-dimensional frame $L$, if $p\in Pt(L)$, then the ideal ${\bf M}_{p}$ is the only maximal ideal of $\mathcal{R}L$ for which ${\bf M}_{cp}={\bf M}_{p}\cap\mathcal{R}_cL$.
\end  {corol}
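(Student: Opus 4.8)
The plan is to check that ${\bf M}_p$ has the required property and that it is the only maximal ideal of $\mathcal{R}L$ with it. The ``existence'' half is already on record in the excerpt: since $p\in Pt(L)$ with $p<1$, ${\bf M}_p$ is a (fixed) maximal ideal of $\mathcal{R}L$, and the remark that ${\bf M}_{ca}={\bf M}_a\cap\mathcal{R}_cL$ for $1\ne a\in L$ gives exactly ${\bf M}_{cp}={\bf M}_p\cap\mathcal{R}_cL$. So the whole content is uniqueness.

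For uniqueness, let $M$ be any maximal ideal of $\mathcal{R}L$ with $M\cap\mathcal{R}_cL={\bf M}_{cp}$. I would prove $M\subseteq{\bf M}_p$; since ${\bf M}_p$ is proper (as $\coz{\bf1}=1\not\leq p$) and $M$ is maximal, this forces $M={\bf M}_p$. To get the inclusion, fix $\delta\in M$. Zero-dimensionality of $L$ gives $\coz\delta=\bigvee\{c\in BL\mid c\leq\coz\delta\}$, so it suffices to show $c\leq p$ for every complemented $c\leq\coz\delta$. Given such a $c$, consider the idempotent $e_c\in\mathcal{R}_cL$, for which $\coz e_c=c$. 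Then $e_c\delta\in M$ and $\coz(e_c\delta)=c\wedge\coz\delta=c=\coz e_c$. Recalling that every maximal ideal of $\mathcal{R}L$ is a $z$-ideal for a completely regular $L$ (if $\beta\notin M$ then $\coz m\vee\coz\beta=1$ for some $m\in M$, making $m^2+\beta^2$ an invertible element of $M$), we conclude $e_c\in M$, hence $e_c\in M\cap\mathcal{R}_cL={\bf M}_{cp}$, that is, $c=\coz e_c\leq p$. Therefore $\coz\delta\leq p$, i.e. $\delta\in{\bf M}_p$, which finishes the argument.

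I do not expect a genuine obstacle here. The one point worth flagging is where zero-dimensionality enters: it is used precisely to replace a possibly non-countable-image $\delta\in M$ by the idempotents $e_c$ ($c\in BL$, $c\leq\coz\delta$), which do lie in $\mathcal{R}_cL$; this is the mechanism that propagates the hypothesis $M\cap\mathcal{R}_cL={\bf M}_{cp}$ back into control over all of $M$. (If one prefers to avoid quoting ``$M$ is a $z$-ideal'', one can instead use that $M$ is prime and $e_c(e_c-{\bf1})={\bf0}$: were $e_c-{\bf1}\in M$, then $\delta^2+(e_c-{\bf1})^2\in M$ would have cozero $\coz\delta\vee c^*\geq c\vee c^*=1$, again contradicting properness of $M$.)
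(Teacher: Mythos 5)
Your proof is correct and is essentially the paper's argument: both use zero-dimensionality to pass to complemented elements $c\leq\coz\delta$, the idempotents $e_c\in\mathcal{R}_cL$ with $\coz e_c=c$, and the fact that the maximal ideal $M$ is a $z$-ideal to force $e_c\in M\cap\mathcal{R}_cL={\bf M}_{cp}$ (the paper runs this by contradiction, picking $\alpha\in M$ with $\coz\alpha\not\leq p$ and $a\in BL$, $a\leq\coz\alpha$, $a\not\leq p$, whereas you prove the inclusion $M\subseteq{\bf M}_p$ directly; same mechanism either way). One cosmetic slip: in your parenthetical justification that maximal ideals are $z$-ideals, the invertible element that lies in $M$ should be $m^2+\alpha^2$ with $\alpha\in M$ having the same cozero as $\beta$, not $m^2+\beta^2$; your alternative argument via primeness and $e_c(e_c-{\bf1})={\bf0}$ is fine as stated.
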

\begin{proof}
	Suppose $M\ne{\bf M}_{p}$ is a maximal ideal of $\mathcal{R}L$ with ${\bf M}_{cp}=M\cap\mathcal{R}_cL$, and we aim to find a contradiction. Let $\alpha\in M$ be such that $\coz\alpha\not\leq p$. Since $L$ is zero-dimensional, there exists an element $a\in BL$ satisfying $ a\leq\coz\alpha$ and $a\not\leq p$. Given that $M$ is a $z_c$-ideal and $\coz e_a=a\leq\coz\alpha$, we can conclude that $e_a\in M$. On the other hand,  since $a\not\leq p$, it follows that $e_a\not\in{\bf M}_{cp}$. Therefore,  $e_a\in M\cap\mathcal{R}_cL\setminus{\bf M}_{cp}$, which present a contradiction.
\end{proof}
As shown in \cite[Lemma 4.7]{du3}, completely regular frames possess a criterion for compactness  in terms of fixed maximal ideals of $\mathcal{R}L$. We will now establish that the situation for  zero-dimensional frames is similar.
\begin{theorem}\label{fm}
	The following are equivalent for a zero-dimensional frame $L$.
	\begin{enumerate} 
		\item $L$ is compact.
		\item Every proper ideal of $\mathcal{R}_cL$ is  fixed.
		\item  Every maximal ideal of $\mathcal{R}_cL$ is  fixed.
		\item  Every proper ideal of $\mathcal{R}_c^*L$ is  fixed.
		\item  Every maximal ideal of $\mathcal{R}_c^*L$ is  fixed.
	\end{enumerate}
\end{theorem}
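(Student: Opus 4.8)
The plan is to establish the two cycles $(1)\Rightarrow(2)\Rightarrow(3)\Rightarrow(1)$ and $(1)\Rightarrow(4)\Rightarrow(5)\Rightarrow(1)$, the implications $(2)\Rightarrow(3)$ and $(4)\Rightarrow(5)$ being trivial since maximal ideals are proper. The engine of the proof consists of facts proved earlier in the paper: Theorem \ref{m} (the maximal ideals of $\mathcal{R}_cL$ are exactly the ideals ${\bf M}^I_c$ with $I\in Pt(\beta_0L)$), Lemma \ref{f} (${\bf M}^I_c$ is fixed iff $\bigvee I<1_L$), Theorem \ref{ig} together with Lemma \ref{i}(1) (${\bf O}^I_c$ is generated by the idempotents $e_a$, $a\in I$, and $\bigvee_{a\in I}\coz e_a=\bigvee I$), the standard fact that $\coz\delta=1_L$ forces $\delta$ to be a unit of $\mathcal{R}_cL$, and the unit criterion of \cite[Lemma 3.2]{g} for $\mathcal{R}_c^*L$. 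I shall also use a short auxiliary observation --- proved by a direct argument from the fact that $L$ is zero-dimensional and $BL$ is a sublattice of $L$ generating it --- that $L$ is compact if and only if the only $I\in\beta_0L$ with $\bigvee I=1_L$ is $1_{\beta_0L}$.

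For $(1)\Rightarrow(2)$, let $Q$ be a proper ideal of $\mathcal{R}_cL$ and suppose $\bigvee_{\alpha\in Q}\coz\alpha=1_L$. Since $\coz\alpha\vee\coz\beta=\coz(\alpha^2+\beta^2)$ with $\alpha^2+\beta^2\in Q$, the family $\{\coz\alpha:\alpha\in Q\}$ is directed, so compactness of $L$ yields a single $\delta\in Q$ with $\coz\delta=1_L$; then $\delta$ is a unit lying in $Q$, a contradiction, so $Q$ is fixed. For $(1)\Rightarrow(4)$, the same computation --- now taking $\delta$ to be a sum of squares of finitely many members of $Q$ --- produces $\delta\in Q\subseteq\mathcal{R}_c^*L$ with $\delta\geq{\bf0}$ and $\coz\delta=1_L$. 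As $\delta\geq{\bf0}$ we have $\delta(-,0)=0$, hence $\coz\delta=\delta(0,-)=\bigvee_{0<q\in\mathbb{Q}}\delta(q,-)=1_L$; since $\{\delta(q,-):0<q\in\mathbb{Q}\}$ is a chain, compactness of $L$ gives $\delta(q_0,-)=1_L$ for some rational $q_0>0$, so $\delta(-,-q_0)\vee\delta(q_0,-)=1_L$ and \cite[Lemma 3.2]{g} makes $\delta$ a unit of $\mathcal{R}_c^*L$ --- again a contradiction, so $Q$ is fixed.

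For $(3)\Rightarrow(1)$ and $(5)\Rightarrow(1)$ I argue contrapositively, starting in both cases from the same construction: if $L$ is not compact, the auxiliary observation gives some $I_0\in\beta_0L$ with $I_0\neq1_{\beta_0L}$ and $\bigvee I_0=1_L$; since $\beta_0L$ is compact, $I_0$ lies below a maximal element of $\beta_0L$ (Zorn's lemma), which is prime because $\beta_0L$ is regular (being zero-dimensional), so we obtain $I\in Pt(\beta_0L)$ with $I_0\subseteq I$, and hence $\bigvee I=1_L$. For $(3)$: by Theorem \ref{m} the ideal ${\bf M}^I_c$ is a maximal ideal of $\mathcal{R}_cL$, and by Lemma \ref{f} it is not fixed, contradicting $(3)$. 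For $(5)$: by Theorem \ref{ig} the idempotents $e_a$ ($a\in I$) belong to ${\bf O}^I_c$, and since they lie in $\mathcal{R}_c^*L$ and satisfy $\bigvee_{a\in I}\coz e_a=\bigvee I=1_L$ by Lemma \ref{i}(1), the ideal $A$ of $\mathcal{R}_c^*L$ that they generate satisfies $\bigvee_{\alpha\in A}\coz\alpha=1_L$; yet $A\subseteq{\bf O}^I_c\subseteq{\bf M}^I_c\subsetneq\mathcal{R}_cL$ gives ${\bf1}\notin A$, so $A$ is proper. Any maximal ideal $N$ of $\mathcal{R}_c^*L$ containing $A$ then contains all the $e_a$, so $\bigvee_{\alpha\in N}\coz\alpha=1_L$; thus $N$ is not fixed, contradicting $(5)$.

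The step I expect to be the main obstacle is $(5)\Rightarrow(1)$, because an explicit description of the maximal ideals of $\mathcal{R}_c^*L$ is only developed later, in Section \ref{7}, and I want a proof that does not rely on it. The device that avoids this is the observation above that the ${\bf O}^I_c$-ideals are generated by \emph{bounded} idempotents whose cozeros already join to $\bigvee I$; this directly produces a proper ideal of $\mathcal{R}_c^*L$ whose cozeros have join $1_L$, and then every maximal ideal above it is automatically free. A smaller point requiring care in $(1)\Rightarrow(4)$ is the passage from $\coz\delta=1_L$ to $\delta(q_0,-)=1_L$ for a genuine rational $q_0>0$: it uses that $\delta\geq{\bf0}$ (so $\coz\delta=\delta(0,-)$) together with the fact that $\delta(0,-)$ is the join of the chain $\{\delta(q,-):0<q\in\mathbb{Q}\}$, to which compactness applies to give an element of the chain equal to $1_L$.
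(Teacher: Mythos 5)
Your proof is correct, but it follows a genuinely different route from the paper's. The paper proves $(1)\Leftrightarrow(2)$ directly — the implication $(2)\Rightarrow(1)$ by an elementary cover argument inside $\Coz_cL$ (given $\bigvee A=1$, it builds an ideal $I$ of $\Coz_cL$ with $\bigvee I=1$, takes $Q=\Coz^\leftarrow[I]$, and reads off a finite subcover from ${\bf 1}\in Q$) — and then transfers to the bounded ring without ever leaving it: $(2)\Rightarrow(4)$ because compactness forces $\mathcal{R}_cL=\mathcal{R}_c^*L$, and $(4)\Rightarrow(2)$ by contracting a proper ideal $Q$ of $\mathcal{R}_cL$ to $Q\cap\mathcal{R}_c^*L$ via $\delta\mapsto\delta^2({\bf 1}+\delta^2)^{-1}$, which preserves cozeros; $(2)\Leftrightarrow(3)$ and $(4)\Leftrightarrow(5)$ are the trivial ``every proper ideal sits in a maximal one'' steps. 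You instead close two cycles through $(1)$: your $(3)\Rightarrow(1)$ and $(5)\Rightarrow(1)$ go through $\beta_0L$, producing from non-compactness a prime $I\in Pt(\beta_0L)$ with $\bigvee I=1$ and hence an explicitly free maximal ideal (${\bf M}^I_c$ via Theorem \ref{m} and Lemma \ref{f}, respectively a maximal ideal of $\mathcal{R}_c^*L$ above the proper ideal generated by the bounded idempotents $e_a$, $a\in I$), and your $(1)\Rightarrow(4)$ is a direct unit-criterion argument: $\delta\geq{\bf 0}$, $\coz\delta=\delta(0,-)=1$, compactness applied to the chain $\{\delta(q,-)\mid 0<q\in\mathbb{Q}\}$ gives $\delta(q_0,-)=1$, so $\delta$ is invertible in $\mathcal{R}_c^*L$ by \cite[Lemma 3.2]{g}. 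What the paper's route buys is economy — it needs none of the Section \ref{4}--\ref{5} machinery for this theorem and the contraction trick handles the bounded ring in two lines; what yours buys is explicitness (free maximal ideals are exhibited in both rings when $L$ is not compact) at the cost of leaning on Theorem \ref{m}, Lemmas \ref{f} and \ref{i}, and your auxiliary observation. That observation is true but stated without proof, so spell it out: if $L$ is not compact, take $T\subseteq L$ with $\bigvee T=1$ and no finite subjoin equal to $1$, and let $I_0$ be the ideal of $BL$ generated by $\{c\in BL\mid c\leq t \mbox{ for some } t\in T\}$; zero-dimensionality gives $\bigvee I_0=1$, while $1_L\in I_0$ would yield a finite subcover, so $I_0\neq 1_{\beta_0L}$. (Also note that both you and the paper use, in $(1)\Rightarrow(2)$, that $\coz\delta=1$ makes $\delta$ invertible in $\mathcal{R}_cL$; this is the same standard fact the paper invokes in the proof of Theorem \ref{m}.)
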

\begin{proof}
	(1)$\Rightarrow$ (2) Suppose $Q$ is an ideal of $\mathcal{R}_cL$ such that $\bigvee\limits_{\alpha\in Q}\coz\alpha=1$. So compactness of $L$ implies that  $\bigvee\limits_{i=1}^n\coz\alpha_i=1$ for some $\alpha_1, \alpha_2,\cdots,\alpha_n\in Q$, which leads to
	\[
	1=\bigvee\limits_{i=1}^n\coz\alpha_i=\bigvee\limits_{i=1}^n\coz\alpha_i^2=\coz(\alpha_1^2+\cdots\alpha_n^2).
	\]
	Letting $\varphi=\alpha_1^2+\cdots\alpha_n^2$, then we  have $\varphi\in Q$ with $\coz\varphi=1$. So, ${\bf1}\in Q$, implying that
	$Q=\mathcal{R}_cL$, which establishes that (1) implies (2).
	
	(2)$\Rightarrow$ (1) Let us consider the  situation where $A\subseteq L$ and $\bigvee A=1$. Define 
	$J=\{ s\in\Coz_cL \mid s\leq a \mbox{ for some } a\in A\}$. 
	Then, by zero-dimensionaliy, we have $\bigvee J=1$. Next, let 
	$I=\{ t\in\Coz_cL \mid t\leq\bigvee S \mbox{ for some finite } S\subseteq J\}$. 
	It is clear that $I$ is an ideal of $\Coz_cL$ such that $\bigvee I=1$. Hence, $Q=\Coz^\leftarrow[I]$ is an ideal of $\mathcal{R}_cL$ with $\bigvee\limits_{\alpha\in Q}\coz\alpha=\bigvee I=1$. Based on the current hypothesis, it follows that $Q=\mathcal{R}_cL$, implying ${\bf1}\in Q$. As a result, $1\in I$, which in turn suggests that $\bigvee S=1$ for some finite  $S\subseteq J$. Hence, there exists a finite set $B\subseteq A$ such that $\bigvee B=1$, which establishes that $L$ is compact.
		
	(2)$\Rightarrow$ (4) If (2) holds, then by the equivalence of (1) and (2), we  can conclude that $L$ is compact. This, in turn,  implies that $\mathcal{R}_cL=\mathcal{R}_c^*L$, thereby establishing that  (4) also holds.
	
	(4)$\Rightarrow$ (2) Consider   a proper ideal $Q$ of $\mathcal{R}_cL$. Then, $Q^c=Q\cap\mathcal{R}_c^*L$ is a proper ideal of $\mathcal{R}_c^*L$.  Furthermore, for any $\delta\in Q$, the element
	$\delta^2(\bf1+\delta^2)^{-1}$ belongs to $Q^c$ with $\coz\delta=\coz\big(\delta^2(\bf1+\delta^2)^{-1}\big)$. This implies that 
	$\bigvee\limits_{\alpha\in Q}\coz\alpha=\bigvee\limits_{\beta\in Q^c}\coz\beta$. As a result, 
	the current hypothesis therefore leads to $\bigvee\limits_{\alpha\in Q}\coz\alpha<1$, showing that $Q$ is a fixed ideal.
	
	(2)$\Leftrightarrow$ (3) and (4)$\Leftrightarrow$ (5). These equivalence are a result of the fact every proper ideal of a commutative ring $R$ is contained in a maximal ideal of $R$.
\end{proof}
\section{Maximal ideals in $\mathcal{R}_c^*L$}\label{7}
In this section, all our frames are assumed to be completely regular.
The focus of this section is on characterizing the maximal ideals of the ring $\mathcal{R}_c^*L$. Recall from \cite{ball2002} that a quotient $f: L \rightarrow M$ is a $C$-\emph{quotient} (or a $C^*$-\emph{quotient}) if, for any $\varphi \in \mathcal{R}M$ ($\varphi \in \mathcal{R}^*M$), there exists $\bar{\varphi} \in \mathcal{R}L$ ($\bar{\varphi} \in \mathcal{R}^*L$) such that $f\bar{\varphi} = \varphi$. Moreover, the fact that  the join map $j_L: \beta L \to L$ is a $C^*$-quotient implies $\mathcal{R}(\beta{L}) \cong \mathcal{R}^*L$.

As described in \cite{du2}, we denote the ring isomorphism by $\mathfrak{t}_L$, defined as
\[
\mathfrak{t}_L: \mathcal{R}(\beta{L}) \rightarrow \mathcal{R}^*L \quad \alpha \mapsto j_L\alpha,
\]
and its inverse is denoted by $\alpha \mapsto \alpha^\beta$. In the article \cite{du2}, the author uses this isomorphism to describe the maximal ideals of the ring $\mathcal{R}^*L$. For any $I \in \beta{L}$ with $I < \top_{\beta{L}}$, the ideal $M^{*I}$ of $\mathcal{R}^*L$ is given by
$
M^{*I} = \{\alpha \in \mathcal{R}^*L \mid \coz (\alpha^\beta) \subseteq I\}.
$
It is shown that maximal ideals of $\mathcal{R}^*L$ are precisely the ideals $M^{*I}$, for a point $I \in \beta{L}$.


Recall from \cite{a} that a quotient $f: L \rightarrow M$ is a $C_c$-\emph{  quotient } (or a $C_c^*$-\emph{  quotient}) if, for any $\alpha\in\mathcal{R}_c(M)$ ($\alpha\in\mathcal{R}_c^*(M)$), there is $\bar{\alpha}\in\mathcal{R}_cL$ ($\bar{\alpha}\in\mathcal{R}_c^*L$) such that $f\bar{\alpha} = \alpha$.

The preceding discussion asks whether $L$ is a $C_c^*$-quotient, with the quotient $j_0 :\beta_0 L\to L$. It is evident that $L$ is not necessarily a $C_c$- quotient, for example,  take $L=\mathfrak{O}(\mathbb{N})$.
Considering the next proposition, the answer to this question is also negative.

\begin{lemma} \label{cs1}
	If  $j_0: \beta_0L \to L$  is a $C_c^*$-quotient, then $\mathcal{R}_c^*L$ is isomorphic to $\mathcal{R}_c(\beta_0L)$.
\end{lemma}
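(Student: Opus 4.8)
The plan is to mimic the classical fact that $\mathcal{R}(\beta L)\cong\mathcal{R}^*L$ via the join map, but now working with the Banaschewski compactification $j_0:\beta_0L\to L$ and the countable-image subrings. First I would observe that $\beta_0L$ is a compact (zero-dimensional) frame, so $\mathcal{R}_c(\beta_0L)=\mathcal{R}_c^*(\beta_0L)$; indeed every $\alpha\in\mathcal{R}_c(\beta_0L)$ is bounded because on a compact frame every element of $\mathcal{R}(\beta_0L)$ is bounded (this is the pointfree analogue of a continuous function on a compact space being bounded). Thus the target ring is already its own bounded part, and we are really comparing $\mathcal{R}_c^*(\beta_0L)$ with $\mathcal{R}_c^*L$.

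Next I would define the candidate isomorphism. Set $\Phi:\mathcal{R}_c(\beta_0L)\to\mathcal{R}_c^*L$ by $\Phi(\alpha)=j_0\cdot\alpha$, i.e.\ post-composition with the frame map $j_0:\beta_0L\to L$ (here $j_0\alpha$ denotes the composite $\mathcal{L}(\mathbb{R})\xrightarrow{\alpha}\beta_0L\xrightarrow{j_0}L$). One checks routinely that $\Phi$ is a ring homomorphism — this is just functoriality of $\mathcal{R}(-)$ applied to the frame map $j_0$, together with the fact that $j_0\alpha$ is bounded (since $\alpha$ is bounded) and has countable pointfree image: $R_{j_0\alpha}\subseteq R_\alpha$ because $\coz(j_0\alpha-{\bf r})=j_0(\coz(\alpha-{\bf r}))$, so if $\coz(\alpha-{\bf r})=1_{\beta_0L}$ then $\coz(j_0\alpha-{\bf r})=1_L$; hence $R_{j_0\alpha}$ is countable whenever $R_\alpha$ is. So $\Phi$ indeed lands in $\mathcal{R}_c^*L$.

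The two things to prove are that $\Phi$ is onto and injective. Surjectivity is exactly where the hypothesis is used: given $\varphi\in\mathcal{R}_c^*L$, the assumption that $j_0$ is a $C_c^*$-quotient furnishes $\bar\varphi\in\mathcal{R}_c^*(\beta_0L)=\mathcal{R}_c(\beta_0L)$ with $j_0\bar\varphi=\varphi$, i.e.\ $\Phi(\bar\varphi)=\varphi$. For injectivity I would use density of $j_0$: by the standard description in the preliminaries, the join map $j_0:\beta_0L\to L$ is a dense frame homomorphism. If $\Phi(\alpha)=\Phi(\beta)$, then $j_0(\alpha-\beta)={\bf 0}$ in $\mathcal{R}_c^*L$, so $j_0(\coz(\alpha-\beta))=\coz(j_0(\alpha-\beta))=0_L$, and density of $j_0$ forces $\coz(\alpha-\beta)=0_{\beta_0L}$, hence $\alpha-\beta={\bf 0}$ since $\mathcal{R}(\beta_0L)$ is a reduced ring (an $f$-ring, so $\varphi={\bf 0}$ iff $\coz\varphi=0$). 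Therefore $\Phi$ is a ring isomorphism.

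The main obstacle — or rather the only place real content enters beyond bookkeeping — is the surjectivity step, and that is precisely what the $C_c^*$-quotient hypothesis is tailored to supply; without it the statement would fail, which is consistent with the paper's subsequent remark (Proposition \ref{qn}) that $L$ need not be a $C_c^*$-quotient of $\beta_0L$. A minor point to get right is the identification $\mathcal{R}_c(\beta_0L)=\mathcal{R}_c^*(\beta_0L)$ so that the $C_c^*$-quotient property (which only guarantees bounded lifts) actually produces a preimage in the full ring $\mathcal{R}_c(\beta_0L)$; everything else (that $\Phi$ preserves the ring operations, respects the countable-image condition, and that density plus reducedness give injectivity) is routine.
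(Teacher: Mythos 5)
Your proposal is correct and follows essentially the same route as the paper: the paper defines the very same map $\mathfrak{t}_0(\alpha)=j_0\alpha$ and simply asserts it is an isomorphism, while you supply the routine verifications (ring homomorphism by functoriality, $R_{j_0\alpha}\subseteq R_\alpha$, injectivity from density of $j_0$ and reducedness, surjectivity from the $C_c^*$-quotient hypothesis together with $\mathcal{R}_c(\beta_0L)=\mathcal{R}_c^*(\beta_0L)$ by compactness). Nothing further is needed.
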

\begin{proof}
	Define 
	$\mathfrak{t}_0: \mathcal{R}_c(\beta_0{L})\rightarrow\mathcal{R}_c^*L$   by  $\mathfrak{t}_0(\alpha)=j_0\alpha$.
	It is easy to verify that $\mathfrak{t}_0$ is an isomorphism.
\end{proof}
\begin {prop}\label{qn}
	Suppose $L$ is a strongly zero-dimensional frame such that $\mathcal{R}(\beta L)\ne\mathcal{R}_c(\beta L)$ and $\mathcal{R}^*(L)=\mathcal{R}_c^*L$ (for example, $L=\mathfrak{O}(\mathbb{N})$). Then $j_0: \beta_0L \to L$  cannot be a $C_c^*$-quotient.
	In particular, if we exchange the strong zero-dimensionality of $L$ with the complete regularity of $L$, then $j_0: \beta_0L \to L$  cannot be a $C_c^*$-quotient.
\end {prop}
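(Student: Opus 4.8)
The plan is to argue by contradiction: assuming $j_0:\beta_0L\to L$ is a $C_c^*$-quotient, I will show that $\mathcal{R}_c(\beta L)=\mathcal{R}(\beta L)$, contradicting the hypothesis. The first step is to assemble the relevant isomorphisms. Since $L$ is strongly zero-dimensional, hence completely regular and zero-dimensional, Theorem \ref{cs} gives a frame isomorphism $f:\beta_0L\to\beta L$ with $f(I)=\downarrow\! I$, and the point I would record is that it intertwines the two join maps, namely $j_L\circ f=j_0$, because $\bigvee\downarrow\! I=\bigvee I$ for every ideal $I$ of $BL$. Post-composition with $f$ is then a ring isomorphism $\mathcal{R}(\beta_0L)\to\mathcal{R}(\beta L)$, $\psi\mapsto f\psi$; and since $\coz(f\psi-\mathbf{r})=f(\coz(\psi-\mathbf{r}))$ while $f$ is a bijection fixing the top element, one has $R_{f\psi}=R_\psi$, so this isomorphism restricts to an isomorphism $\mathcal{R}_c(\beta_0L)\to\mathcal{R}_c(\beta L)$.

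Next I would bring in the two \emph{bounded} isomorphisms: $\mathfrak{t}_L:\mathcal{R}(\beta L)\to\mathcal{R}^*L$, $\varphi\mapsto j_L\varphi$, which is a ring isomorphism because $j_L$ is a $C^*$-quotient, and, since $j_0$ is being assumed to be a $C_c^*$-quotient, the isomorphism $\mathfrak{t}_0:\mathcal{R}_c(\beta_0L)\to\mathcal{R}_c^*L$, $\psi\mapsto j_0\psi$, supplied by Lemma \ref{cs1}.

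Now comes the diagram chase. Take an arbitrary $\varphi\in\mathcal{R}(\beta L)$ and put $\alpha:=j_L\varphi\in\mathcal{R}^*L$. Since $\mathcal{R}^*L=\mathcal{R}_c^*L$ by hypothesis, $\alpha\in\mathcal{R}_c^*L$, so $\psi:=\mathfrak{t}_0^{-1}(\alpha)\in\mathcal{R}_c(\beta_0L)$ satisfies $j_0\psi=\alpha$. Setting $\varphi':=f\psi\in\mathcal{R}_c(\beta L)$, I obtain
\[
\mathfrak{t}_L(\varphi')=j_L(f\psi)=(j_L\circ f)(\psi)=j_0\psi=\alpha=\mathfrak{t}_L(\varphi),
\]
so $\varphi=\varphi'\in\mathcal{R}_c(\beta L)$ by injectivity of $\mathfrak{t}_L$. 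As $\varphi$ was arbitrary, $\mathcal{R}(\beta L)\subseteq\mathcal{R}_c(\beta L)$, whence $\mathcal{R}(\beta L)=\mathcal{R}_c(\beta L)$, the desired contradiction. For the final assertion I would observe that complete regularity together with $\mathcal{R}^*L=\mathcal{R}_c^*L$ already forces strong zero-dimensionality: for any $\alpha\in\mathcal{R}L$ the bounded element $\alpha^2(\mathbf{1}+\alpha^2)^{-1}$ lies in $\mathcal{R}^*L=\mathcal{R}_c^*L\subseteq\mathcal{R}_cL$ and has cozero $\coz\alpha$, so $\Coz L\subseteq\Coz_cL$ and hence $\Coz L=\Coz_cL$; then $L$ is strongly zero-dimensional by Theorem \ref{cs}, and the case just treated applies.

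I expect the main obstacle to be the two transfer facts gathered in the first paragraph: that the Theorem \ref{cs} isomorphism $f$ intertwines $j_0$ with $j_L$, which is what makes $\varphi'$ and $\varphi$ have the same image under $\mathfrak{t}_L$, and that a frame isomorphism preserves the property of having a countable pointfree image, which is what makes $\varphi'=f\psi$ actually land in $\mathcal{R}_c(\beta L)$ and not merely in $\mathcal{R}(\beta L)$. Once these are in hand, everything else is a routine chase through $\mathfrak{t}_L$ and $\mathfrak{t}_0$.
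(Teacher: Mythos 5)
Your proof is correct and follows the same skeleton as the paper's: argue by contradiction, use Lemma \ref{cs1} to get $\mathcal{R}_c(\beta_0L)\cong\mathcal{R}_c^*L$, use Theorem \ref{cs} to identify $\beta_0L$ with $\beta L$, and combine with $\mathfrak{t}_L$ and the hypothesis $\mathcal{R}^*L=\mathcal{R}_c^*L$ to conclude $\mathcal{R}(\beta L)=\mathcal{R}_c(\beta L)$. The difference is one of precision, and it is in your favour: the paper passes from the chain of abstract isomorphisms $\mathcal{R}(\beta L)\cong\mathcal{R}^*L=\mathcal{R}_c^*L\cong\mathcal{R}_c(\beta_0L)\cong\mathcal{R}_c(\beta L)$ straight to the \emph{equality} $\mathcal{R}(\beta L)=\mathcal{R}_c(\beta L)$, although an abstract isomorphism between a ring and a subring does not by itself contradict the hypothesis $\mathcal{R}(\beta L)\ne\mathcal{R}_c(\beta L)$; your diagram chase supplies exactly the missing compatibility, namely that the Theorem \ref{cs} isomorphism $f$ satisfies $j_L\circ f=j_0$ and preserves countable pointfree image ($R_{f\psi}=R_\psi$), so that the composite isomorphism is the identity on $\mathcal{R}(\beta L)$ restricted along the inclusion, yielding genuine set equality. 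Likewise, for the final assertion the paper only says it follows ``in the same way,'' while you give the cleaner reduction: complete regularity together with $\mathcal{R}^*L=\mathcal{R}_c^*L$ gives $\Coz L=\Coz_cL$ (via $\alpha^2({\bf1}+\alpha^2)^{-1}$), hence strong zero-dimensionality by Theorem \ref{cs}, so the second case collapses into the first. Both refinements are sound and make the argument more self-contained than the paper's version.
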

\begin{proof}
	To find a contradiction, assume that $j_0: \beta_0L \to L$ is a $C_c^*$-quotient. Then, by the foregoing lemma, $\mathcal{R}_c^*L\cong\mathcal{R}_c(\beta_0L)$. Since $L$ is strongly zero-dimensional, we can deduce from Theorem \ref{cs} that $\mathcal{R}_c(\beta L)\cong\mathcal{R}_c(\beta_0L)$.
	Therefore
$\mathcal{R}(\beta L)\cong\mathcal{R}^*(L)=\mathcal{R}_c^*L\cong\mathcal{R}_c(\beta_0L)\cong\mathcal{R}_c(\beta L)$.
	This implies that $\mathcal{R}(\beta L)=\mathcal{R}_c(\beta L)$, which is the desired contradiction.
	In the same way, we proved the first part, we can demonstrate the last part.
\end{proof}

We will now proceed to describe the maximal ideals of the ring $\mathcal{R}_c^*L$. 
To recall, an ideal $S$ in a $f$-ring $R$ is calld \emph{  absolutely convex} if, whenever $r, s\in R$ with $|r|\leq|s|$ and $s\in S$, then $r\in S$.
\begin {prop}\label{acc}
	If  $L$ is a frame, then an ideal $Q$ in $\mathcal{R}_c^*L$ is an absolutely convex ideal  iff it is a contraction of an absolutely convex ideal of $\mathcal{R}^*L$.
\end {prop}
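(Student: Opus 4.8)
## Proof Proposal

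The plan is to prove both implications, with the easy direction being that contractions of absolutely convex ideals are absolutely convex. If $A$ is an absolutely convex ideal of $\mathcal{R}^*L$ and $Q = A \cap \mathcal{R}_c^*L$, then given $r, s \in \mathcal{R}_c^*L$ with $|r| \leq |s|$ and $s \in Q$, we have $s \in A$, hence $r \in A$ by absolute convexity in $\mathcal{R}^*L$; since also $r \in \mathcal{R}_c^*L$, we get $r \in Q$. So $Q$ is absolutely convex in $\mathcal{R}_c^*L$. This uses only that $\mathcal{R}_c^*L$ is a subring of $\mathcal{R}^*L$ closed under the relevant operations (in particular, it is a sublattice as an $f$-ring, so $|r| \in \mathcal{R}_c^*L$ whenever $r \in \mathcal{R}_c^*L$).

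For the nontrivial direction, suppose $Q$ is an absolutely convex ideal of $\mathcal{R}_c^*L$. The natural candidate for the extension is
\[
A = \{\beta \in \mathcal{R}^*L \mid |\beta| \leq |\alpha| \text{ for some } \alpha \in Q\},
\]
mimicking the construction used for $z_c$-ideals in the proposition preceding Corollary \ref{pc}. First I would check that $A$ is an ideal of $\mathcal{R}^*L$: it is clearly closed under the $f$-ring operations needed — if $|\beta_1| \leq |\alpha_1|$ and $|\beta_2| \leq |\alpha_2|$ with $\alpha_1, \alpha_2 \in Q$, then $|\beta_1 + \beta_2| \leq |\beta_1| + |\beta_2| \leq |\alpha_1| + |\alpha_2| \leq |\,|\alpha_1| + |\alpha_2|\,|$ and $|\alpha_1| + |\alpha_2| \in Q$ (here one wants $Q$ closed under $|\cdot|$ and sums, which holds); and if $\gamma \in \mathcal{R}^*L$, then $|\gamma \beta| = |\gamma||\beta| \leq \|\gamma\| \cdot |\alpha|$ for a suitable bound, and $\|\gamma\| \cdot |\alpha|$ — or rather a comparison via $|\gamma\beta| \le |\gamma||\alpha|$ and absorbing the bounded factor $|\gamma|$ into $Q$ using that $Q$ is an ideal of $\mathcal{R}_c^*L$ once we know $|\gamma||\alpha| \in \mathcal{R}_c^*L$; but $|\gamma|$ need not have countable image. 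This is the subtlety: one should instead use that $|\gamma\beta| \le c|\alpha|$ where $c = \|\gamma\|$ is a rational constant bound, so $c|\alpha| \in Q$, putting $\gamma\beta \in A$. Absolute convexity of $A$ is then immediate from the definition and transitivity of $\le$ on absolute values.

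It remains to show $Q = A \cap \mathcal{R}_c^*L$. The inclusion $Q \subseteq A \cap \mathcal{R}_c^*L$ is trivial (take $\alpha$ to be the element itself). For the reverse, suppose $\beta \in A \cap \mathcal{R}_c^*L$, so $\beta \in \mathcal{R}_c^*L$ and $|\beta| \leq |\alpha|$ for some $\alpha \in Q$. Then absolute convexity of $Q$ inside $\mathcal{R}_c^*L$ gives $\beta \in Q$ directly. So the equality holds. The \textbf{main obstacle} I anticipate is the ideal-closure check for $A$ under multiplication by arbitrary elements of $\mathcal{R}^*L$: one must be careful that multiplying $|\alpha|$ (with $\alpha \in Q$) by a bounded function keeps one inside $Q$, which is guaranteed only because $Q$ is an ideal of $\mathcal{R}_c^*L$ and because boundedness lets us dominate by a rational scalar multiple $c \cdot \mathbf{1} \in \mathcal{R}_c^*L$; the estimate $|\gamma\beta| \le c|\alpha|$ with $c\mathbf1 \in \mathcal R_c^*L$ and $c|\alpha| \in Q$ resolves this cleanly. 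Everything else is routine manipulation of absolute values in an $f$-ring.
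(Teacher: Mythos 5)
Your proposal is correct and follows essentially the same route as the paper: the same extension ideal $H=\{\beta\in\mathcal{R}^*L \mid |\beta|\leq|\alpha| \text{ for some } \alpha\in Q\}$, the same domination of a bounded multiplier by a constant ${\bf n}$ so that $|\gamma\beta|\leq|{\bf n}\alpha|$ with ${\bf n}\alpha\in Q$, and the same use of absolute convexity of $Q$ to get $Q=H\cap\mathcal{R}_c^*L$. The only difference is that you spell out the easy direction and the closure checks that the paper leaves as immediate.
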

\begin{proof}
	To prove the nontrivial part of the proposition, consider  an ideal $Q$  in $\mathcal{R}_c^*L$, and define
$
	H=\{\alpha\in\mathcal{R}^*L \mid |\alpha|\leq|\delta| \mbox{ for some } \delta \in Q\}.
$
	We first show that $H$ is an ideal of $\mathcal{R}^*L$.
	It is apparent that for any $\alpha, \beta \in H$, we have $\alpha+\beta\in H$. If $\alpha \in H$ and $\beta\in \mathcal{R}^*L$, then  there exist $n\in\mathbb{N}$ and $\delta\in Q$ such that $|\beta|\leq{\bf n}$ and $|\alpha|\leq|\delta|$. This leads to
	$|\alpha\beta|=|\alpha||\beta|\leq{\bf n}|\delta|=|{\bf n}\delta|$, implying that $\alpha\beta\in H$ since ${\bf n}\delta\in Q$. Consequently,
	$H$ is an ideal in $\mathcal{R}^*L$, and it is clearly a proper ideal if $Q$ is a proper ideal. It can be readily observed that $H$ is absolutely convex, contains $Q$, and satisfies $Q=H^c=H\cap\mathcal{R}^*L$.
\end{proof}

Considering the above proposition and the fact that prime ideals in $\mathcal{R}_c^*L$ are absolutely convex (refer to \cite[Proposition 8.1]{tes}), we can immediately infer the following corollary.
\begin  {corol}
	Every maximal ideal of $\mathcal{R}_c^*L$  is a contraction of a maximal ideal in $\mathcal{R}^*L$.
\end  {corol}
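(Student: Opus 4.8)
The plan is to follow the same template as the proof of Corollary~\ref{mc}, with absolutely convex ideals playing the role that $z_c$-ideals played there. First I would observe that a maximal ideal $Q$ of $\mathcal{R}_c^*L$ is prime, and that, by \cite[Proposition 8.1]{tes}, prime ideals of $\mathcal{R}_c^*L$ are absolutely convex; hence $Q$ is an absolutely convex ideal of $\mathcal{R}_c^*L$.

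Next I would invoke Proposition~\ref{acc} to produce an absolutely convex ideal $H$ of $\mathcal{R}^*L$ with $Q=H^c=H\cap\mathcal{R}_c^*L$. Since $Q$ is proper, $H$ is proper as well (as noted in the proof of Proposition~\ref{acc}, or directly because ${\bf1}\in\mathcal{R}_c^*L$ would otherwise force ${\bf1}\in Q$), so $H$ is contained in some maximal ideal $M$ of $\mathcal{R}^*L$. Then
\[
Q=H^c\subseteq M^c=M\cap\mathcal{R}_c^*L,
\]
and $M^c$ is a proper ideal of $\mathcal{R}_c^*L$ because ${\bf1}\notin M$. The maximality of $Q$ then forces $Q=M^c$, which is exactly the assertion.

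There is no genuine obstacle here: all the substance is already carried by Proposition~\ref{acc} together with the absolute convexity of prime ideals of $\mathcal{R}_c^*L$. The only point deserving a second's attention is checking that the contraction $M^c$ remains a proper ideal, so that maximality of $Q$ can be applied, and this is immediate from ${\bf1}\in\mathcal{R}_c^*L\setminus M$. If one wished, the companion statement that $Q$ is real whenever $M$ is could be appended via the same monomorphism $\mathcal{R}_c^*L/Q\hookrightarrow\mathcal{R}^*L/M$ used in Corollary~\ref{mc}, but it is not needed for the corollary as stated.
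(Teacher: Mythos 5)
Your proposal is correct and follows essentially the same route as the paper, which derives the corollary immediately from Proposition~\ref{acc} together with the absolute convexity of prime ideals of $\mathcal{R}_c^*L$ from \cite[Proposition 8.1]{tes}; your write-up merely makes explicit the standard final steps (extending $H$ to a maximal ideal $M$ and using properness of $M^c$ plus maximality of $Q$), exactly as in Corollary~\ref{mc}.
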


In the article \cite{tes}, it is shown that the rings $\mathcal{R}_c(\beta{L})$ and $\mathcal{R}_c^*L$ are isomorphic. We denote by $\mathfrak{t}_{cL}$ the ring  isomorphism 
$
\mathfrak{t}_{cL}=\mathfrak{t}_L|_{\mathcal{R}_c(\beta{L})}: \mathcal{R}_c(\beta{L})\rightarrow\mathcal{R}_c^*L$
 defined by $\mathfrak{t}_cL(\alpha)=j_L\alpha$, the inverse of which we will denote by $\alpha\mapsto\alpha^{\beta_0}$.

\begin{definition}
	For each $I\in\beta L$ with $I<1_{\beta L}$ , the ideal ${\bf M}_c^{*I}$ of $\mathcal{R}_c^*L$ is expressed as 
	\[{\bf M}_c^{*I}=\{\alpha\in\mathcal{R}_c^*L \mid \coz(\alpha^{\beta_0})\subseteq I\}.\]
\end{definition}
It is clear that ${\bf M}_c^{*I}={\bf M}^{*I}\cap\mathcal{R}_c^*L=\{\alpha\in\mathcal{R}_c^*L \mid \coz(\alpha^{\beta})\subseteq I\}$. Maximal ideals of $\mathcal{R}_c^*L$ are among these ideals.
We introduce the relation  $I\backsim J$ on $Pt (\beta L)$, defined by ${\bf M}^{*I}\cap\mathcal{R}_c^*L={\bf M}^{*J}\cap\mathcal{R}_c^*L$. It is evident that this defines an equivalence relation on $Pt (\beta L)$. Let us denote the set of equivalence classes as $P=\{[I] \mid I\in Pt (\beta L)\}$, where $[I]$ represents the  equivalence class of $I\in Pt (\beta L)$. Consequently, the maximal ideals of $\mathcal{R}_c^*L$ are indexed by the  equivalence classes of this relation on $Pt (\beta L)$. In simpler terms, the entire collection of maximal ideals of $\mathcal{R}_c^*L$  precisely corresponds to the set  
$ \{{\bf M}^{*I}\cap\mathcal{R}_c^*L \mid I\in P\}$.
Utilizing these results, we derive a representation for maximal ideals of $\mathcal{R}_c^*L$ as follows:
\begin{theorem}\label{tw}
	Maximal ideals of $\mathcal{R}_c^*L$ can be described as follow
	$${\bf M}^{*I}_c =\{\alpha\in\mathcal{R}_c^*L \mid \coz(\alpha^{\beta_0})\subseteq I\}, \qquad I\in P.$$
\end{theorem}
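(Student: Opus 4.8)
The plan is to obtain the maximal ideals of $\mathcal{R}_c^*L$ by contracting those of $\mathcal{R}^*L$, which are already known by \cite{du2}, and then to check separately that every ideal ${\bf M}^{*I}_c$ so obtained is genuinely maximal. Accordingly the argument will have three steps: first, that each maximal ideal of $\mathcal{R}_c^*L$ has the form ${\bf M}^{*I}_c$ for some $I\in Pt(\beta L)$; second, that conversely ${\bf M}^{*I}_c$ is maximal for every $I\in Pt(\beta L)$; and third, the bookkeeping that identifies the collection of these ideals with $P$.

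For the first step I would argue as follows. Let $Q$ be a maximal ideal of $\mathcal{R}_c^*L$. By the corollary following Proposition~\ref{acc}, $Q$ is the contraction of a maximal ideal of $\mathcal{R}^*L$, and by \cite{du2} every maximal ideal of $\mathcal{R}^*L$ is $M^{*I}$ for a point $I$ of $\beta L$; hence $Q=M^{*I}\cap\mathcal{R}_c^*L$. Since $\mathfrak{t}_{cL}$ is the restriction of $\mathfrak{t}_L$ to $\mathcal{R}_c(\beta L)$, we have $\alpha^{\beta_0}=\alpha^{\beta}$ for every $\alpha\in\mathcal{R}_c^*L$, so this contraction is exactly $\{\alpha\in\mathcal{R}_c^*L\mid\coz(\alpha^{\beta_0})\subseteq I\}={\bf M}^{*I}_c$.

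The second step is the one that needs a real argument, since in general the contraction of a maximal ideal need not be maximal. Fix $I\in Pt(\beta L)$, so that $M^{*I}$ is a maximal ideal of $\mathcal{R}^*L$; recall that, because $\mathcal{R}^*L$ consists of bounded elements, the totally ordered residue field $\mathcal{R}^*L/M^{*I}$ is Archimedean over $\mathbb{R}$ and therefore $\mathcal{R}^*L/M^{*I}\cong\mathbb{R}$, with the class of a constant ${\bf r}$ going to $r$. Now every constant ${\bf r}$ with $r\in\mathbb{R}$ lies in $\mathcal{R}_c^*L$, since it is bounded and $R_{\bf r}=\{r\}$ is finite. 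Consequently the composite homomorphism $\mathcal{R}_c^*L\hookrightarrow\mathcal{R}^*L\twoheadrightarrow\mathcal{R}^*L/M^{*I}\cong\mathbb{R}$ is already surjective on the constants, hence surjective, and its kernel is $M^{*I}\cap\mathcal{R}_c^*L={\bf M}^{*I}_c$. Thus $\mathcal{R}_c^*L/{\bf M}^{*I}_c\cong\mathbb{R}$, and ${\bf M}^{*I}_c$ is maximal.

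Finally, by the very definition of the relation $\backsim$, the assignment $[I]\mapsto{\bf M}^{*I}_c$ is a well-defined injection of $P$ into the set of ideals of $\mathcal{R}_c^*L$; the second step shows it takes values among the maximal ideals, and the first step shows it is onto them. Hence the maximal ideals of $\mathcal{R}_c^*L$ are precisely the ideals ${\bf M}^{*I}_c$ with $I\in P$, which is the assertion. The main obstacle, as indicated, is the second step: the saving mechanism is that $\mathcal{R}_c^*L$ still contains all real constant functions, which squeezes $\mathcal{R}_c^*L/{\bf M}^{*I}_c$ between $\mathbb{R}$ and $\mathbb{R}$. A smaller technical point to keep straight is the identity $\alpha^{\beta_0}=\alpha^{\beta}$ on $\mathcal{R}_c^*L$, used to reconcile the two formulas for ${\bf M}^{*I}_c$.
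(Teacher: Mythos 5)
Your proposal is correct and, for the main direction, follows the paper's own route: every maximal ideal of $\mathcal{R}_c^*L$ is a contraction of a maximal ideal of $\mathcal{R}^*L$ (the corollary to Proposition \ref{acc}), the maximal ideals of $\mathcal{R}^*L$ are the ideals $M^{*I}$, $I\in Pt(\beta L)$, by \cite{du2}, and the relation $\backsim$ handles the redundancy; the identification $\alpha^{\beta_0}=\alpha^{\beta}$ on $\mathcal{R}_c^*L$ is exactly what the paper records just before the theorem. Where you go beyond the paper is your second step: the paper only observes that the maximal ideals of $\mathcal{R}_c^*L$ are \emph{among} the contractions ${\bf M}^{*I}_c$ and then asserts the correspondence with $P$, without arguing that each such contraction is itself maximal (it proves a statement of that kind only later, under strong zero-dimensionality, via $\mathfrak{t}_{cL}$ and the fixed maximal ideals of $\mathcal{R}_c(\beta L)$). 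Your residue-field argument --- the composite $\mathcal{R}_c^*L\hookrightarrow\mathcal{R}^*L\to\mathcal{R}^*L/M^{*I}\cong\mathbb{R}$ is onto because it already hits every real constant, so $\mathcal{R}_c^*L/{\bf M}^{*I}_c\cong\mathbb{R}$ --- settles this in full generality and makes the statement genuinely two-sided. The one point to tighten is the input $\mathcal{R}^*L/M^{*I}\cong\mathbb{R}$: invoking total order plus Archimedeanness requires knowing $M^{*I}$ is absolutely convex, so it is cleaner to route through $\mathfrak{t}_L$, under which $M^{*I}$ corresponds to the fixed maximal ideal $\{\alpha\in\mathcal{R}(\beta L)\mid\coz\alpha\leq I\}$, i.e. the kernel of the onto map $\mathcal{R}\xi:\mathcal{R}(\beta L)\to\mathcal{R}({\bf 2})\cong\mathbb{R}$ for the character $\xi$ determined by the point $I$ of the compact frame $\beta L$; either way the constants land on the corresponding reals, which is all your surjectivity argument needs.
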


According to Theorem \ref{fm}, if $L$ is strongly zero-dimensional, then  the maximal ideals of $\mathcal{R}_c(\beta L)$ are all fixed. This means that these maximal ideals can be precisely described as ${\bf M}_{cI}=\{\alpha\in\mathcal{R}_c(\beta L) \mid \coz\alpha\subseteq I\}$, where $I\in Pt(\beta L)$, as shown in Theorem \ref{ff}.

\begin {prop}
	If $L$ is a strongly zero-dimensional, then maximal ideals of $\mathcal{R}_c^*L$ are precisely the ideals  ${\bf M}^{*I}_c$, for $I\in Pt(\beta L)$. They are distinct for distinct $I$.
\end {prop}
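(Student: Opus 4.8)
The plan is to transport the description of maximal ideals of $\mathcal{R}_c(\beta L)$ through the isomorphism $\mathfrak{t}_{cL}\colon\mathcal{R}_c(\beta L)\to\mathcal{R}_c^*L$. Since $L$ is strongly zero-dimensional, Theorem \ref{cs} gives $\beta L\cong\beta_0 L$, so $\beta L$ is a compact zero-dimensional frame; hence by Theorem \ref{fm} (applied to the zero-dimensional frame $\beta L$, which is compact) every maximal ideal of $\mathcal{R}_c(\beta L)$ is fixed, and by Theorem \ref{ff} these are exactly the ideals ${\bf M}_{cI}=\{\alpha\in\mathcal{R}_c(\beta L)\mid\coz\alpha\subseteq I\}$ for $I\in Pt(\beta L)$, pairwise distinct for distinct $I$. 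The isomorphism $\mathfrak{t}_{cL}$ carries the maximal ideals of $\mathcal{R}_c(\beta L)$ bijectively onto the maximal ideals of $\mathcal{R}_c^*L$, so every maximal ideal of $\mathcal{R}_c^*L$ has the form $\mathfrak{t}_{cL}[{\bf M}_{cI}]$ for a unique $I\in Pt(\beta L)$.

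Next I would identify $\mathfrak{t}_{cL}[{\bf M}_{cI}]$ with ${\bf M}^{*I}_c$. By definition, $\alpha\in\mathcal{R}_c^*L$ lies in $\mathfrak{t}_{cL}[{\bf M}_{cI}]$ precisely when $\alpha^{\beta_0}\in{\bf M}_{cI}$, i.e.\ $\coz(\alpha^{\beta_0})\subseteq I$, which is exactly the defining condition of ${\bf M}^{*I}_c$. Thus $\mathfrak{t}_{cL}[{\bf M}_{cI}]={\bf M}^{*I}_c$, and the previous paragraph immediately yields that the maximal ideals of $\mathcal{R}_c^*L$ are precisely the ${\bf M}^{*I}_c$ with $I\in Pt(\beta L)$. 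For distinctness, suppose ${\bf M}^{*I}_c={\bf M}^{*J}_c$ with $I,J\in Pt(\beta L)$; applying $\mathfrak{t}_{cL}^{-1}$ gives ${\bf M}_{cI}={\bf M}_{cJ}$, whence $I=J$ by the distinctness clause in Theorem \ref{ff}.

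The one genuine subtlety to watch is that the general description of maximal ideals of $\mathcal{R}_c^*L$ recorded just before the statement indexes them by equivalence classes $[I]\in P$ of the relation $I\backsim J$ on $Pt(\beta L)$, whereas here we claim a bona fide indexing by points of $\beta L$. The point is that strong zero-dimensionality forces this relation to be trivial: because the maximal ideals ${\bf M}^{*I}_c=\mathfrak{t}_{cL}[{\bf M}_{cI}]$ are already pairwise distinct for distinct $I$ (by the argument above), each equivalence class $[I]$ is a singleton, so $P$ may be identified with $Pt(\beta L)$ itself. I would state this explicitly so that Theorem \ref{tw} and the present proposition are reconciled. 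Apart from that bookkeeping, the proof is a routine transport of structure along $\mathfrak{t}_{cL}$, so I do not anticipate a real obstacle — the only care needed is to invoke the hypothesis of strong zero-dimensionality exactly where it is used, namely to make $\beta L$ compact and zero-dimensional so that Theorems \ref{fm} and \ref{ff} apply to $\mathcal{R}_c(\beta L)$.
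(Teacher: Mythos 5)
Your proposal is correct and follows essentially the same route as the paper: both use Theorems \ref{fm} and \ref{ff} to see that the maximal ideals of $\mathcal{R}_c(\beta L)$ are exactly the fixed ideals ${\bf M}_{cI}$, $I\in Pt(\beta L)$, and then transport them along $\mathfrak{t}_{cL}$, identifying $\mathfrak{t}_{cL}[{\bf M}_{cI}]$ with ${\bf M}^{*I}_c$ (the paper proves one inclusion and invokes maximality, while you note the equality is immediate from $\mathfrak{t}_{cL}^{-1}(\alpha)=\alpha^{\beta_0}$ --- a negligible variation). Your closing remark reconciling the indexing by $Pt(\beta L)$ with the equivalence classes $P$ of Theorem \ref{tw} is a sound piece of bookkeeping that the paper leaves implicit.
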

\begin{proof}
	Based on the preceding discussion, the maximal ideals of $\mathcal{R}_c(\beta L)$ correspond exactly to the ideals ${\bf M}_{cI}$, where $I\in Pt(\beta L)$. 
	So, it suffices to show that ${\bf M}^{*I}_c=\mathfrak{t}_{cL}[{\bf M}_{cI}]$ for any $I\in Pt(\beta L)$. 
	Additionally, due to the maximality of $ \mathfrak{t}_cL[{\bf M}_{cI}]$, it is enough to establish that $\mathfrak{t}_{cL}[{\bf M}_{cI}]\subseteq {\bf M}^{*I}_c$ because ${\bf M}^{*I}_c$ is a proper ideal.
	Suppose $\alpha\in{\bf M}_{cI}$, then $\coz\alpha\subseteq I$. Consequently, since $\alpha=\mathfrak{t}_{cL}^{-1}\big(\mathfrak{t}_{cL}(\alpha)\big)=(j_L\alpha)^{\beta_0}$, we have $\coz\big((j_L\alpha)^{\beta_0}\big)\subseteq I$. It follows $j_L\alpha\in{\bf M}^{*I}_c$, by definition. Thus, $\mathfrak{t}_{cL}(\alpha)\in{\bf M}^{*I}_c$, as needed. Furthermore, the latter part of the proposition is evident since  $I\ne J$ in $Pt(\beta L)$ implies ${\bf M}_{cI}\ne{\bf M}_{cI}$.
\end{proof}
We close this section by introducing the following types of fixed ideals of $\mathcal{R}_c^*L$ and describing fixed maximal ideals of $\mathcal{R}_c^*L$ in the case where $L$ is a strongly zero-dimensional frame.
\begin{definition}
	For any $a\in L$ with $a<1$, define the subset ${\bf M}_{ca}^*$ of $\mathcal{R}_c^*L$ by
	\[
	{\bf M}_{ca}^*=\{\alpha\in\mathcal{R}_c^*L \mid \coz\alpha\leq a\}.
	\]
\end{definition}
Clearly, ${\bf M}_{ca}^*$ is a fixed ideal of $\mathcal{R}_c^*L$, and indeed, ${\bf M}_{ca}^*={\bf M}_{ca}\cap \mathcal{R}_c^*L$. 
By applying a version of the proof from \cite [Propositions 3.9 and 3.12 ]{du2} in $\mathcal{R}_cL$, we derive a similar result to Theorem \ref{ff}.
\begin {prop}
	The following statements  hold for any strongly zero-dimensional frame $L$.
	\begin{enumerate}
		\item For any $I\in Pt(\beta_0L)$, the maximal ideal ${\bf M}^{*I}_c$ is fixed  iff $\bigvee I<1$.
		\item The fixed maximal ideals of $\mathcal{R}_c^*L$ are exactly the ideals ${\bf M}_{cp}^*$ for $p\in Pt(L)$. 
	\end{enumerate}
\end {prop}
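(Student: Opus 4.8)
The plan is to mirror the proof of \cite[Propositions 3.9 and 3.12]{du2} through the isomorphism $\mathfrak{t}_{cL}:\mathcal{R}_c(\beta L)\to\mathcal{R}_c^*L$, $\gamma\mapsto j_L\gamma$, exploiting that, since $L$ is strongly zero-dimensional, $\beta L\cong\beta_0L$ is a \emph{compact} zero-dimensional frame by Theorem \ref{cs}. Consequently, by Theorems \ref{fm} and \ref{ff}, every maximal ideal of $\mathcal{R}_c(\beta L)$ is fixed and equals ${\bf M}_{cI}=\{\gamma\in\mathcal{R}_c(\beta L)\mid\coz\gamma\subseteq I\}$ for a unique $I\in Pt(\beta L)$; and, transporting along $\mathfrak{t}_{cL}$ as in the preceding proposition, every maximal ideal of $\mathcal{R}_c^*L$ is of the form ${\bf M}_c^{*I}=\mathfrak{t}_{cL}[{\bf M}_{cI}]$ with $I\in Pt(\beta L)$, identified with a point of $\beta_0L$ via the isomorphism $f:\beta_0L\to\beta L$ of Theorem \ref{cs} (so that $j_0=j_L\circ f$ and $\bigvee I$ is unambiguous).

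For (1), I would compute the cozero join of ${\bf M}_c^{*I}$. Since $\mathfrak{t}_{cL}(\gamma)=j_L\gamma$ and the cozero map commutes with frame homomorphisms, $\{\coz\alpha\mid\alpha\in{\bf M}_c^{*I}\}=\{\,j_L(\coz\gamma)\mid\gamma\in{\bf M}_{cI}\,\}$, hence $\bigvee_{\alpha\in{\bf M}_c^{*I}}\coz\alpha=j_L\bigl(\bigvee\Coz[{\bf M}_{cI}]\bigr)$. The key point is then $\bigvee\Coz[{\bf M}_{cI}]=I$: the inequality $\le$ is immediate from the definition of ${\bf M}_{cI}$, while for $\ge$ one uses zero-dimensionality of $\beta L$ to write $I=\bigvee\{c\in B(\beta L)\mid c\le I\}$ and notes that each such $c$ satisfies $e_c\in{\bf M}_{cI}$ with $\coz e_c=c$ (this is exactly Lemma \ref{i}(1) applied inside $\beta L$). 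Therefore $\bigvee_{\alpha\in{\bf M}_c^{*I}}\coz\alpha=j_L(I)=\bigvee I$, so ${\bf M}_c^{*I}$ is fixed if and only if $\bigvee I<1$.

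For (2), the delicate direction is that ${\bf M}_{cp}^*$ is a fixed \emph{maximal} ideal for every $p\in Pt(L)$. Fixedness is clear, as $\bigvee_{\alpha\in{\bf M}_{cp}^*}\coz\alpha\le p<1$ (indeed equality holds, since $p=\bigvee\{c\in BL\mid c\le p\}$ and $e_c\in{\bf M}_{cp}^*$ for each such $c$). For maximality I would first verify that $r_0(p)\in Pt(\beta_0L)$: using the adjunction $j_0\dashv r_0$ and that $j_0$ is a frame map, $a\wedge b\le r_0(p)\Leftrightarrow j_0(a)\wedge j_0(b)\le p\Leftrightarrow j_0(a)\le p$ or $j_0(b)\le p\Leftrightarrow a\le r_0(p)$ or $b\le r_0(p)$, and $r_0(p)\ne 1_{\beta_0L}$ because $j_0$ is onto. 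Since $j_0$ is onto, $\bigvee r_0(p)=j_0 r_0(p)=p<1$, so part (1) makes ${\bf M}_c^{*r_0(p)}$ a fixed maximal ideal; and for $\alpha\in{\bf M}_c^{*r_0(p)}$ one has $\coz\alpha=j_L(\coz(\alpha^{\beta_0}))\le\bigvee r_0(p)=p$, i.e.\ ${\bf M}_c^{*r_0(p)}\subseteq{\bf M}_{cp}^*$, which forces ${\bf M}_{cp}^*={\bf M}_c^{*r_0(p)}$ by maximality on the left and properness on the right. Conversely, a fixed maximal ideal of $\mathcal{R}_c^*L$ equals ${\bf M}_c^{*I}$ with $\bigvee I<1$ by the preceding proposition and part (1); then $p:=\bigvee I\in Pt(L)$ by Lemma \ref{p}, and the same containment argument yields ${\bf M}_c^{*I}={\bf M}_{cp}^*$. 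Distinctness for distinct $p$ follows since $p=\bigvee\Coz[{\bf M}_{cp}^*]$ is recoverable from the ideal. The main obstacle is this bookkeeping in (2): one must exhibit an honest point $r_0(p)$ of $\beta_0L$ with $\bigvee r_0(p)=p$ in order to feed part (1) and thereby upgrade ${\bf M}_{cp}^*$ from a fixed ideal to a maximal one; everything else is routine transport along $\mathfrak{t}_{cL}$ and $f$.
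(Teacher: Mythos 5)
Your proposal is correct and follows essentially the route the paper itself indicates: the paper's proof is just the instruction to adapt Dube's Propositions 3.9 and 3.12 of \cite{du2} via $\mathfrak{t}_{cL}$ and the results of Sections 5--6, which is exactly what you carry out (computing $\bigvee\Coz[{\bf M}_c^{*I}]=\bigvee I$, using the preceding proposition, Lemma \ref{p}, and the point $r_0(p)$ with $\bigvee r_0(p)=p$). The only cosmetic remark is that you silently use that a strongly zero-dimensional (completely regular) frame is zero-dimensional, which follows from Theorem \ref{cs}(2) since $\Coz L=\Coz_cL$ generates $L$ and cozero elements of $\Coz_cL$ are joins of complemented elements.
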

\section{The frame of maximal ideals of $\mathcal{R}_cL$ }\label{8}

Let $\ma(R)$ denote the set of maximal ideals of a commutative ring $R$ with an identity, equipped with the Hall-Kernel topology. Recall that the topology of $\ma(R)$ corresponds exactly to the frame 
\[
\mathfrak{O}(\ma(R))=\bigl\{\mathcal{D}(Q) \mid Q \mbox{ is an ideal of } R\bigr\},
\]
where, $\mathcal{D}(Q)= \bigl\{M\in\ma(R) \mid Q\not\subseteq M\bigr\}$ for any ideal $Q$ of $R$. 
We use the abbreviate $\mathcal{D}(\langle a\rangle)$ as $\mathcal{D}(a)$ for the principal ideal $\langle a\rangle$.
It is a well-known fact that for any completely regular frame $L$,
$
\mathfrak{O}(\ma(\mathcal{R}L))\cong\beta L.
$
The objective here is to show that for any  zero-dimensional frame $L$,
$
\mathfrak{O}(\ma(\mathcal{R}_cL))\cong\beta_0 L,
$
similar to the spatial result that $\ma(C_c(X))$ is homeomorphic to $\beta_0X$, the Banaschewski compactification of a zero-dimensional space $X$, for any zero-dimensional space $X$ (see \cite{akko}). The proof begins with a lemma.
\begin{lemma}\label{ec}
	Suppose $L$ is a frame and $\alpha\in\mathcal{R}L$ such that $\coz(\alpha-{\bf r})=1$, where $0<r<1$. Then, there is an idempotent element $e\in\mathcal{R}L$ such that $\coz e\leq\coz\alpha$ and $\coz(e-{\bf1})\leq\coz(\alpha-{\bf1}))$.
\end{lemma}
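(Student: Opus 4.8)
The plan is to exhibit $e$ explicitly as one of the idempotents $e_{a}$ ($a\in BL$) introduced before Lemma~\ref{al}, with $a$ the complemented element produced by Lemma~\ref{al}. Since by hypothesis $\coz(\alpha-{\bf r})=1$ with $0<r<1$, Lemma~\ref{al} applies and gives that $\alpha(-,r)$ is complemented with $\big(\alpha(-,r)\big)^{*}=\alpha(r,-)$; consequently $c:=\alpha(r,-)$ also lies in $BL$, with $c^{*}=\big(\alpha(-,r)\big)^{**}=\alpha(-,r)$. I would then set $e:=e_{c}$. By the properties recorded from \cite[8.4]{ball2002}, $e$ is idempotent with $\coz e=c=\alpha(r,-)$ and $\coz(e-{\bf1})=c^{*}=\alpha(-,r)$.

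What remains is to verify the two required inequalities, and both reduce to monotonicity of the scale maps $s\mapsto\alpha(s,-)$ (decreasing) and $s\mapsto\alpha(-,s)$ (increasing). Since $r>0$, one gets $\coz e=\alpha(r,-)\le\alpha(0,-)\le\alpha(-,0)\vee\alpha(0,-)=\coz\alpha$, hence $\coz e\le\coz\alpha$. Since $r<1$, one gets $\alpha(-,r)\le\alpha(-,1)$; and using the translation identity $(\alpha-{\bf1})(-,0)=\alpha(-,1)$ recorded just before Lemma~\ref{al}, this yields $\coz(e-{\bf1})=\alpha(-,r)\le(\alpha-{\bf1})(-,0)\vee(\alpha-{\bf1})(0,-)=\coz(\alpha-{\bf1})$. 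This finishes the argument.

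I do not anticipate a genuine obstacle here; the only point requiring a little attention is the interplay between the two notations for the scale of an element of $\mathcal{R}L$ — the interval notation $\alpha(-,0)$, $\alpha(0,-)$ appearing in the definition of $\coz$, and the real-indexed notation $\alpha(-,s)$, $\alpha(s,-)$ — together with the formulas $(\alpha-{\bf1})(-,0)=\alpha(-,1)$ and $(\alpha-{\bf1})(0,-)=\alpha(1,-)$ derived in the excerpt. Once those are invoked, the proof is just the two monotonicity estimates above, so the key structural inputs are Lemma~\ref{al} and the standard properties of the idempotents $e_{a}$.
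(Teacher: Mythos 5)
Your proposal is correct and follows essentially the same route as the paper: apply Lemma \ref{al}, take $e=e_{\alpha(r,-)}$ (the paper writes $a=\alpha(r,-)$, $a^*=\alpha(-,r)$), and read off $\coz e\le\coz\alpha$ and $\coz(e-{\bf1})\le\coz(\alpha-{\bf1})$ from the properties of $e_a$. The only difference is that you spell out the monotonicity and translation-identity details that the paper leaves implicit, and these verifications are correct.
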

\begin{proof}
	By Lemma \ref{al}, $a= \alpha(r,-)$ is a complemented element in $L$ with $a^*=\alpha(-,r)$.
	Now let us take $e=e_a$, and it follows that $\coz e=a\leq\coz\alpha$ and $\coz(e-{\bf1})=a^*\leq\coz(\alpha-{\bf1}))$.
\end{proof}

Note that, for any idempotent $e\in\mathcal{R}_cL$,  we have
$\mathcal{D}(e)\cap\mathcal{D}(e-{\bf1})=\emptyset$ and $\mathcal{D}(e)\cup\mathcal{D}(e-{\bf1})=\ma(\mathcal{R}_cL)$.
This implies that $\mathcal{D}(e)$ is a complemented element of $\mathfrak{O}(\ma(\mathcal{R}_cL))$.
\begin {prop}
	The frame $\mathfrak{O}(\ma(\mathcal{R}_cL))$ is zero-dimensional for any frame $L$.
\end {prop}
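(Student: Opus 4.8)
The plan is to show that the complemented elements of the form $\mathcal{D}(e_a)$, with $a\in BL$, already form a base for $\mathfrak{O}(\ma(\mathcal{R}_cL))$, which makes zero-dimensionality immediate. Since $\mathfrak{O}(\ma(\mathcal{R}_cL))$ is a topology its joins are unions, and every ideal $Q$ of $\mathcal{R}_cL$ satisfies $\mathcal{D}(Q)=\bigcup_{\alpha\in Q}\mathcal{D}(\alpha)$; so it is enough to prove, for a single $\alpha\in\mathcal{R}_cL$, that
\[
\mathcal{D}(\alpha)=\bigcup\bigl\{\mathcal{D}(e_a)\mid a\in r_0(\coz\alpha)\bigr\},
\]
where writing $a\in r_0(\coz\alpha)$ conveniently packages the conditions ``$a\in BL$ and $a\le\coz\alpha$''. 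The inclusion $\supseteq$ is routine: if $a\in BL$ with $a\le\coz\alpha$ and $N$ is a maximal ideal containing $\alpha$, then $\coz\alpha\in\Coz[N]$, hence $a=\coz(e_a)\in\Coz[N]$ because $\Coz[N]$ is a downward closed ideal, hence $e_a\in N$ since maximal ideals are $z_c$-ideals and therefore coincide with $\Coz^{\leftarrow}[\Coz[N]]$; contraposing gives $\mathcal{D}(e_a)\subseteq\mathcal{D}(\alpha)$. Each $\mathcal{D}(e_a)$ is complemented in $\mathfrak{O}(\ma(\mathcal{R}_cL))$ by the observation recorded just before the statement.

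The substance lies in the inclusion $\subseteq$, and this is where Theorem~\ref{m} does the work. Given $M\in\mathcal{D}(\alpha)$, write $M={\bf M}^I_c$ for the unique $I\in Pt(\beta_0L)$; then $\alpha\notin M$ means $r_0(\coz\alpha)\not\subseteq I$. Since $\beta_0L$ is zero-dimensional, hence regular, $I$ is a maximal element of $\beta_0L$, so $r_0(\coz\alpha)\vee I=1_{\beta_0L}$. As this join is computed in the frame of ideals of $BL$, there exist $a\in r_0(\coz\alpha)$ and $b\in I$ with $a\vee b=1_L$. Then $a\notin I$---otherwise $1_L=a\vee b\in I$, contradicting properness of $I$---so $r_0(\coz(e_a))=r_0(a)\not\subseteq I$, that is, $e_a\notin{\bf M}^I_c=M$, and hence $M\in\mathcal{D}(e_a)$. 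This supplies the required witness and proves the displayed equality.

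Putting the pieces together: every $\mathcal{D}(\alpha)$ is a union, hence a join, of complemented elements $\mathcal{D}(e_a)$ contained in it, and consequently so is every $\mathcal{D}(Q)$; therefore $\mathfrak{O}(\ma(\mathcal{R}_cL))$ is zero-dimensional. Observe that nothing above uses zero-dimensionality of $L$ itself: Theorem~\ref{m}, the regularity and compactness of $\beta_0L$, the inclusion $BL\subseteq\Coz_cL$, and the fact that maximal ideals of $\mathcal{R}_cL$ are $z_c$-ideals all hold for an arbitrary frame. The main obstacle I anticipate is a temptation rather than a technicality: one is tempted to run the whole argument inside the lattice $\Coz_cL$ via the maximal ideal $\Coz[M]$, but a maximal ideal of $\Coz_cL$ does not hand one a \emph{complemented} witness directly, and manufacturing one would require splitting an infinite cover $\coz\alpha=\bigvee_{n}c_n$ (with $c_n\in BL$) into a finite subcover---which is unavailable when $L$ is not compact. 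Passing to $\beta_0L$ repairs exactly this, since there $r_0(\coz\alpha)$ is already an ideal of $BL$ and the relevant cover takes place in the compact frame $\beta_0L$; in effect the needed compactness enters through Theorem~\ref{m}.
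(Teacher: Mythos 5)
Your proof is correct, but it takes a genuinely different route from the paper's. The paper works directly with a pair $(Q,M)$, $M\in\mathcal{D}(Q)$: from $Q+M=\mathcal{R}_cL$ it picks $\alpha\in Q$ with $\alpha-{\bf1}\in M$, uses countability of $R_\alpha$ to choose $r\in(0,1)$ with $\coz(\alpha-{\bf r})=1$, and then Lemma \ref{ec} yields an idempotent $e$ with $\coz e\leq\coz\alpha$ and $\coz(e-{\bf1})\leq\coz(\alpha-{\bf1})$; Lemma \ref{cb} (applied twice, since $\coz e$ and $\coz(e-{\bf1})$ are complemented) then places $e\in Q$ and $e-{\bf1}\in M$, so $M\in\mathcal{D}(e)\subseteq\mathcal{D}(Q)$. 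You instead reduce to principal opens and prove $\mathcal{D}(\alpha)=\bigcup\bigl\{\mathcal{D}(e_a)\mid a\in r_0(\coz\alpha)\bigr\}$, extracting the complemented witness from Theorem \ref{m} together with the fact that points of the compact regular frame $\beta_0L$ are maximal elements, so that $r_0(\coz\alpha)\vee I=1_{\beta_0L}$ produces $a\in BL$ with $a\leq\coz\alpha$ and $a\notin I$. Both arguments are sound. What the paper's proof buys is self-containedness: it never invokes the Gelfand--Kolmogoroff-type Theorem \ref{m}, and the exact place where membership in $\mathcal{R}_cL$ (countable pointfree image) matters is explicit, namely the choice of $r$ with $\coz(\alpha-{\bf r})=1$. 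What your proof buys is brevity at the point of use and a sharper structural statement (each $\mathcal{D}(\alpha)$ is exhibited as a union of the complemented elements $\mathcal{D}(e_a)$ indexed by $r_0(\coz\alpha)$), at the cost of leaning on the heavier machinery of Section \ref{5}, through which the countability hypothesis now enters only indirectly. A small simplification available to you: in the inclusion $\supseteq$ you do not need the $z_c$-ideal and $\Coz^{\leftarrow}[\Coz[N]]$ apparatus, since $a$ complemented and $a\leq\coz\alpha$ give $\coz e_a=a\prec\!\!\prec\coz\alpha$, so Lemma \ref{cb} yields $e_a\in\langle\alpha\rangle\subseteq N$ directly, keeping that half of the argument at the same elementary level as the paper's.
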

\begin{proof}
	
	Consider $M\in \ma(\mathcal{R}_cL)$ such that $M\in\mathcal{D}(Q)= \bigl\{M\in\ma(\mathcal{R}_cL) \mid Q\not\subseteq M\bigr\}$, where $Q$ is an ideal of $\mathcal{R}_cL$. Then we have $Q+M=\mathcal{R}_cL$, which implies that there exists a $\alpha\in Q$ with $\alpha-{\bf1}\in M$. 
	Since $\alpha\in\mathcal{R}_cL$ implies $R_\alpha$ is countable, we can find $r\in\mathbb{R}$ such that $0<r<1$ and $\coz(\alpha-{\bf r})=1$. By the above lemma, there is an idempotent $e\in \mathcal{R}_cL$ such that
	$\coz e\leq\coz\alpha$ and $\coz(e-{\bf1})\leq\coz(\alpha-{\bf1})$. Since $\coz e\prec\!\!\prec\coz e\leq\coz\alpha$ and $\coz(e-{\bf1})\prec\!\!\prec\coz(e-{\bf1})\leq\coz(\alpha-{\bf1})$, respectively, we can conclude that $e\in Q$ and $e-{\bf1}\in M$, by Lemma \ref{cb}. Therefore, $M\in\mathcal{D}(e)\subseteq \mathcal{D}(Q)$. Consequently, there exists a set $E$ of the idempotents in $\mathcal{R}_cL$ such that
$\mathcal{D}(Q)=\bigcup_{e\in E}\mathcal{D}(e)$, which completes the proof.
\end{proof}
For any ideal $Q$ of $\mathcal{R}_cL$, let $J_Q$ be the element of $\beta_0L$ defined by
$J_Q=\bigvee\{r_0(\coz\alpha) \mid \alpha\in Q\}$. Since $\{r_0(\coz\alpha) \mid \alpha\in Q\}$ is directed and $Q$ is an ideal, we can conclude that the join above is effectively a union. In other words, $J_Q=\bigcup\{r_0(\coz\alpha) \mid \alpha\in Q\}$.
\begin{lemma}\label{q}
	The following  statements are true for any frame $L$.
	\begin{enumerate}
		\item If $P$ and $Q$ are two ideals of $\mathcal{R}_cL$ with $P\subseteq Q$, then $J_P\leq J_Q$.
		\item If $P$ and $Q$ are two ideals of $\mathcal{R}_cL$, then $J_{P\cap Q}=J_Q\wedge J_P$.
		\item If $\{Q_\lambda\}_{\lambda\in\Lambda}$ is a collection of ideals of $\mathcal{R}_cL$, 
		then $J_{{\bigcup\limits_{\lambda\in\Lambda}} Q_{\lambda}}=\bigvee\limits_{\lambda\in\Lambda }{J_{Q_{\lambda}}}$.
		\item If $I\in\beta_0L$, then $J_{{\bf O}_c^I}=I$.
	\end{enumerate}
\end{lemma}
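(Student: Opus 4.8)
The plan is to get (1) and (3) directly from the description of $J_Q$ as the join (indeed directed union) of the sets $r_0(\coz\alpha)$, $\alpha\in Q$; to get (2) from the same description together with the fact that $r_0$, being a right adjoint of $\bigvee : \beta_0L\to L$, preserves finite meets; and to obtain the substantive statement (4) by squeezing $J_{{\bf O}^I_c}$ between $I$ and itself, using the two faces of ${\bf O}^I_c$ recorded in Example~\ref{mo} and Theorem~\ref{ig}.

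For (1): if $P\subseteq Q$ then $\{r_0(\coz\alpha)\mid\alpha\in P\}\subseteq\{r_0(\coz\alpha)\mid\alpha\in Q\}$, so the join defining $J_P$ runs over a smaller family and $J_P\leq J_Q$. For (3), reading $\bigcup_\lambda Q_\lambda$ as the set-theoretic union, we have $\{r_0(\coz\alpha)\mid\alpha\in\bigcup_\lambda Q_\lambda\}=\bigcup_\lambda\{r_0(\coz\alpha)\mid\alpha\in Q_\lambda\}$, and associativity of joins in the complete lattice $\beta_0L$ gives $J_{\bigcup_\lambda Q_\lambda}=\bigvee_\lambda J_{Q_\lambda}$. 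For (2), $J_{P\cap Q}\leq J_P\wedge J_Q$ follows from (1); for the reverse inclusion, recall that meets in $\beta_0L$ are intersections, so given $x\in J_P\wedge J_Q=J_P\cap J_Q$ there are $\alpha\in P$, $\beta\in Q$ with $x\in r_0(\coz\alpha)\cap r_0(\coz\beta)=r_0(\coz\alpha\wedge\coz\beta)=r_0(\coz(\alpha\beta))$; since $\alpha\beta\in P\cap Q$, this gives $x\in J_{P\cap Q}$.

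The core is (4). For $J_{{\bf O}^I_c}\leq I$: by the reformulation of ${\bf O}^I_c$ in Example~\ref{mo}, every $\alpha\in{\bf O}^I_c$ satisfies $r_0(\coz\alpha)\prec\!\!\prec I$ in $\beta_0L$, hence in particular $r_0(\coz\alpha)\subseteq I$; joining over $\alpha\in{\bf O}^I_c$ yields $J_{{\bf O}^I_c}\leq I$. For $I\leq J_{{\bf O}^I_c}$: let $a\in I$, so $a\in BL$ with $\coz e_a=a$, and $\coz e_a\prec\!\!\prec\coz e_a=a\in I$ shows $e_a\in{\bf O}^I_c$ (this is the membership already exploited in the proof of Theorem~\ref{ig}); therefore $r_0(a)=r_0(\coz e_a)\subseteq J_{{\bf O}^I_c}$, and since $a\in\downarrow\!a\cap BL=r_0(a)$ we obtain $a\in J_{{\bf O}^I_c}$. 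As $a\in I$ was arbitrary, $I\subseteq J_{{\bf O}^I_c}$, completing (4).

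There is no real obstacle here; the one place to be careful is in (4), where ${\bf O}^I_c$ is defined via $r_0((\coz\alpha)^*)$ and the condition $I\vee r_0((\coz\alpha)^*)=1_{\beta_0L}$, so one should first pass to the equivalent description ${\bf O}^I_c=\{\alpha\mid r_0(\coz\alpha)\prec\!\!\prec I\}$ (Example~\ref{mo}, which rests on Lemma~\ref{c1}(2),(3) and on $\prec$ and $\prec\!\!\prec$ agreeing in the compact zero-dimensional frame $\beta_0L$) before concluding $r_0(\coz\alpha)\subseteq I$. Everything else is bookkeeping with directed joins and the meet-preservation of $r_0$.
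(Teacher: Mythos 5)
Your proof is correct and follows essentially the same route as the paper: (1) and (3) by monotonicity/associativity of joins, (2) by reducing the meet to $r_0(\coz\alpha)\wedge r_0(\coz\beta)=r_0(\coz(\alpha\beta))$ with $\alpha\beta\in P\cap Q$ (the paper does this via frame distributivity rather than the directed-union description, a cosmetic difference), and (4) by the two inclusions using the reformulation of ${\bf O}^I_c$ and the idempotents $e_a$ for $a\in I$, exactly as in the paper.
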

\begin{proof}
	(1) Obvious.
	
	(2) Given that \[\begin{aligned} 
		J_Q\wedge J_P&=\bigg(\bigvee_{\alpha\in P} r_0(\coz\alpha)\bigg) \wedge \bigg(\bigvee_{\beta\in Q} r_0(\coz\beta)\bigg)\\
	&=\bigvee_{\alpha\in P}\bigvee_{\beta\in Q}r_0(\coz\alpha)\wedge r_0(\coz\beta)
		= \bigvee_{\alpha\in P}\bigvee_{\beta\in Q}r_0(\coz\alpha\wedge\coz\beta)\\
	&= \bigvee_{\alpha\in P}\bigvee_{\beta\in Q}r_0(\coz\alpha\beta)
		\leq \bigvee_{\delta\in P\cap Q}r_0(\coz\delta),
	\end{aligned}
	\]
	we can infer that $J_Q\wedge J_P\leq J_{P\cap Q}$. On the other hand, since $P\cap Q$ is a subset of $P$ and $Q$, by Part (1), we have $J_{P\cap Q}\leq J_Q\wedge J_P$. So equality holds.
		
	(3) It is clear that 
$$
	J_{{\bigcup\limits_{\lambda\in\Lambda}} Q_{\lambda}}=\bigvee\limits_{\alpha\in{{\bigcup\limits_{\lambda\in\Lambda}} Q_{\lambda}}} r_0(\coz\alpha)=\bigvee\bigvee\limits_{\alpha\in Q_{\lambda}}r_0(\coz\alpha)=\bigvee\limits_{\lambda\in\Lambda }{J_{Q_{\lambda}}}.
$$
	
	(4) If  $\alpha\in{\bf O}_c^I$, then there exists $a\in I$ such that  $\coz\alpha\leq a$. This shows that $r_0(\coz\alpha)\leq I$, confirming that $J_{{\bf O}_c^I}\leq I$. Now, suppose $a\in I$. Since $e_a\in{\bf O}_c^I$, we have $I\leq J_{{\bf O}_c^I}$. Thus, we can conclude that $J_{{\bf O}_c^I}= I$, as required.
\end{proof}
Recall that if $\{Q_\lambda\}_{\lambda\in\Lambda}$ is a collection of ideals of $\mathcal{R}_cL$, then 
$\mathcal{D}\bigg(\bigcup\limits_{\lambda\in\Lambda} Q_{\lambda}\bigg)=\mathcal\bigcup_{\lambda\in\Lambda}\mathcal{D}(Q_{\lambda})$,
and for two ideals $P$ and $Q$ of $\mathcal{R}_cL$, we also have $\mathcal{D}(P)\cap\mathcal{D}(Q)=\mathcal{D}(P\cap Q)$ .
\begin{theorem}\label{b}
	The frame $\mathfrak{O}(\ma(\mathcal{R}_cL))$ is isomorphic to $\beta_0L$  for any zero-dimensional frame $L$. 
\end{theorem}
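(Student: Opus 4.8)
The plan is to produce an explicit frame isomorphism $\Psi\colon\beta_0L\to\mathfrak{O}(\ma(\mathcal{R}_cL))$, defined by $\Psi(I)=\mathcal{D}({\bf O}^I_c)$; this is well defined since ${\bf O}^I_c$ is an ideal of $\mathcal{R}_cL$, and since a bijective frame homomorphism is automatically a frame isomorphism, it is enough to show that $\Psi$ is a bijective frame homomorphism.

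The heart of the argument is the identity
\[
\mathcal{D}(Q)=\mathcal{D}\big({\bf O}^{J_Q}_c\big)\qquad\text{for every ideal }Q\text{ of }\mathcal{R}_cL,
\]
where $J_Q=\bigvee\{r_0(\coz\alpha)\mid\alpha\in Q\}$. One inclusion is free: the proof of Proposition \ref{m1} (applied with $I=J_Q$) shows ${\bf O}^{J_Q}_c\subseteq Q$, so $\mathcal{D}({\bf O}^{J_Q}_c)\subseteq\mathcal{D}(Q)$ by monotonicity of $\mathcal{D}$. For the reverse inclusion I would rerun the idempotent construction underlying the proof that $\mathfrak{O}(\ma(\mathcal{R}_cL))$ is zero-dimensional: given a maximal ideal $M$ with $Q\not\subseteq M$, pick $\alpha\in Q$ and $m\in M$ with $\alpha+m={\bf1}$, so $\alpha-{\bf1}\in M$; since $R_\alpha$ is countable, choose $r$ with $0<r<1$ and $\coz(\alpha-{\bf r})=1$; Lemma \ref{ec} yields an idempotent $e=e_a$ with $a\in BL$, $\coz e=a\leq\coz\alpha$, and $\coz(e-{\bf1})=a^{*}\leq\coz(\alpha-{\bf1})$; as $a\prec\!\!\prec a$ and $a^{*}\prec\!\!\prec a^{*}$, two applications of Lemma \ref{cb} give $e=\rho\alpha\in Q$ and $e-{\bf1}=\rho'(\alpha-{\bf1})\in M$, hence $e\notin M$; finally $a=\coz e\leq\coz\alpha$ with $\alpha\in Q$ forces $a\in r_0(\coz\alpha)\subseteq J_Q$, so $e=e_a\in{\bf O}^{J_Q}_c$. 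Thus $M\in\mathcal{D}({\bf O}^{J_Q}_c)$.

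Granting this identity, surjectivity of $\Psi$ is immediate, for every $\mathcal{D}(Q)$ equals $\mathcal{D}({\bf O}^{J_Q}_c)=\Psi(J_Q)$. That $\Psi$ is a frame homomorphism is then bookkeeping with Lemma \ref{q}: since ${\bf O}^{\{0_L\}}_c=\{{\bf0}\}$ we get $\Psi(0_{\beta_0L})=\mathcal{D}(\{{\bf0}\})=\emptyset$; since ${\bf1}\in\mathcal{R}_cL$ we have $J_{\mathcal{R}_cL}=1_{\beta_0L}$, so the identity gives $\Psi(1_{\beta_0L})=\mathcal{D}({\bf O}^{J_{\mathcal{R}_cL}}_c)=\mathcal{D}(\mathcal{R}_cL)$, the top element; for finite meets, $\Psi(I)\cap\Psi(I')=\mathcal{D}({\bf O}^I_c\cap{\bf O}^{I'}_c)$ while $J_{{\bf O}^I_c\cap{\bf O}^{I'}_c}=J_{{\bf O}^I_c}\wedge J_{{\bf O}^{I'}_c}=I\wedge I'$ by Lemma \ref{q}(2),(4), so the identity yields $\Psi(I)\cap\Psi(I')=\mathcal{D}({\bf O}^{I\wedge I'}_c)=\Psi(I\wedge I')$; and arbitrary joins are handled in the same spirit, using that joins in $\mathfrak{O}(\ma(\mathcal{R}_cL))$ are unions, that $J_{\bigcup_\lambda{\bf O}^{I_\lambda}_c}=\bigvee_\lambda J_{{\bf O}^{I_\lambda}_c}=\bigvee_\lambda I_\lambda$ by Lemma \ref{q}(3),(4), and Proposition \ref{aa} (to pass between a set-union of ${\bf O}_c$-ideals and the ideal it generates).

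For injectivity I would recover $I$ from the primes of $\beta_0L$ lying above it. For a prime $J\in\beta_0L$ one checks ${\bf O}^I_c\subseteq{\bf M}^J_c$ iff $I\subseteq J$ (using $e_a\in{\bf M}^J_c\iff a\in J$ and the fact that every $\alpha\in{\bf O}^I_c$ has $\coz\alpha$ below some element of $I\subseteq BL$). Since each maximal ideal of $\mathcal{R}_cL$ is ${\bf M}^J_c$ for a unique prime $J$ (Theorem \ref{m} and Lemma \ref{i}(2)), the complement of $\mathcal{D}({\bf O}^I_c)$ in $\ma(\mathcal{R}_cL)$ is indexed precisely by $\{J\in Pt(\beta_0L)\mid I\subseteq J\}$; hence $\mathcal{D}({\bf O}^I_c)=\mathcal{D}({\bf O}^{I'}_c)$ forces $\{J\in Pt(\beta_0L)\mid I\subseteq J\}=\{J\in Pt(\beta_0L)\mid I'\subseteq J\}$, and since $\beta_0L$ is compact zero-dimensional, hence spatial, every element of $\beta_0L$ is the meet of the primes above it, so $I=I'$. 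I expect the genuine obstacle to be the reverse inclusion of the displayed identity, $\mathcal{D}(Q)\subseteq\mathcal{D}({\bf O}^{J_Q}_c)$: it is the only point where one must actually construct an idempotent lying inside $Q$ and outside a prescribed maximal ideal, and it is precisely there that the countability of $R_\alpha$ (through Lemmas \ref{ec} and \ref{cb}) is essential; everything else is formal manipulation of $J_{(-)}$ and $\mathcal{D}(-)$.
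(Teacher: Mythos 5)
Your proposal is correct: every step checks out against the machinery the paper develops (Lemma \ref{cb}, Lemma \ref{ec}, Lemma \ref{q}, Theorem \ref{m}, Lemma \ref{i}(2), Proposition \ref{m1}), and the one place you flag as delicate --- the inclusion $\mathcal{D}(Q)\subseteq\mathcal{D}\big({\bf O}^{J_Q}_c\big)$ via the idempotent $e_a$ with $e\in Q$, $e-{\bf1}\in M$, $a\in r_0(\coz\alpha)\subseteq J_Q$ --- is exactly the argument the paper runs (there, in the preceding proposition showing $\mathfrak{O}(\ma(\mathcal{R}_cL))$ is zero-dimensional). The difference from the paper is one of direction and packaging rather than of substance. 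The paper defines the map the other way round, $\Psi\colon\mathfrak{O}(\ma(\mathcal{R}_cL))\to\beta_0L$, $\mathcal{D}(Q)\mapsto J_Q$: well-definedness is obtained from spatiality of $\beta_0L$ together with Theorem \ref{m} (via $Q\subseteq{\bf M}^I_c\Leftrightarrow J_Q\leq I$), the homomorphism properties come from Lemma \ref{q}, injectivity from density plus zero-dimensionality of both frames (re-using the idempotent proposition), and surjectivity from $J_{{\bf O}^I_c}=I$. You instead build the inverse map $I\mapsto\mathcal{D}({\bf O}^I_c)$, fold the idempotent construction into the single identity $\mathcal{D}(Q)=\mathcal{D}\big({\bf O}^{J_Q}_c\big)$ (which gives surjectivity outright, with ${\bf O}^{J_Q}_c\subseteq Q$ quoted from Proposition \ref{m1}), and prove injectivity from spatiality of $\beta_0L$ and Theorem \ref{m} --- i.e.\ the mirror of what the paper uses for well-definedness. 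What your route buys is that the zero-dimensionality of $\mathfrak{O}(\ma(\mathcal{R}_cL))$ never needs to be stated separately, and the role of countability of $R_\alpha$ (through Lemmas \ref{ec} and \ref{cb}) is isolated in one explicit identity; what the paper's route buys is that, having already proved the structure frame zero-dimensional, injectivity is a one-line density argument. The only spot in your write-up left as ``the same spirit'' --- preservation of arbitrary joins, i.e.\ $J$ of the ideal generated by $\bigcup_\lambda{\bf O}^{I_\lambda}_c$ equals $\bigvee_\lambda I_\lambda$ --- is a routine verification (any element of the generated ideal has cozero below a finite join $b_1\vee\cdots\vee b_n$ with $b_i\in I_{\lambda_i}$, and $r_0(b_1\vee\cdots\vee b_n)\subseteq\bigvee_\lambda I_\lambda$), so no genuine gap remains.
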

\begin{proof}
	We claim that the map $	\Psi : \mathfrak{O}(\ma(\mathcal{R}_cL))\to\beta_0L$ defined by $\Psi(\mathcal{D}(Q))=J_Q$	
	is well-defined and constitutes a frame isomorphism. Initially,
	we establish that it is  well-defined. To show this, it suffices to prove that if $P$ and $Q$ are ideals in $\mathcal{R}_cL$
	such that $\mathcal{D}(P)=\mathcal{D}(Q)$, then $J_P=J_Q$. For any $I\in Pt(\beta_0L)$,
	\[\begin{aligned}
		Q\subseteq{\bf M}_c^I&\Leftrightarrow r_0(\coz\alpha)\leq I\quad \mbox{ for each } \quad\alpha\in Q\\
		&\Leftrightarrow J_Q\leq I.
	\end{aligned}	
	\]
	Since $J_Q$ is the meet of points of  $\beta_0L$ above it, and because $\mathcal{D}(P)=\mathcal{D}(Q)$ precisely when $P$ and $Q$ are contained in the same maximal ideals, we deduce that $J_P=J_Q$, as needed.
	
	It is evident that  $\Psi(0_{\mathfrak{O}(\ma(\mathcal{R}_cL))})=\Psi(\emptyset)=\Psi(\mathcal{D}({\bf0}))=\{0_L\}=0_{\beta_0 L}$ and
	$\Psi(1_{\mathfrak{O}(\ma(\mathcal{R}_cL))})=\Psi(\mathcal{R}_cL)=BL=1_{\beta_0 L}$.
	If $P$ and $Q$ are ideals in $\mathcal{R}_cL$, then Part (2) of the preceding lemma shows that
$
	\Psi\bigg(\mathcal{D}(P)\cap\mathcal{D}(Q)\bigg)=\Psi(\mathcal{D}(P\cap Q))
	=J_{P\cap Q}= J_Q\wedge J_P=\mathcal{D}(P)\cap\mathcal{D}(Q).
$
	Thus,  $\Psi$ preserves meet and order.
		Subsequently, consider a collection of ideals $\{Q_\lambda\}_{\lambda\in\Lambda}$ in $\mathcal{R}_cL$. Then, by Part (3) of the above lemma, we have
	\[
	\Psi\bigg(\mathcal\bigcup_{\lambda\in\Lambda}\mathcal{D}(Q_{\lambda})\bigg)=\Psi\bigg(\mathcal{D}\Big(\bigcup\limits_{\lambda\in\Lambda} Q_{\lambda}\Big)\bigg)=J_{{\bigcup\limits_{\lambda\in\Lambda}} Q_{\lambda}}=\bigvee\limits_{\lambda\in\Lambda }{J_{Q_{\lambda}}}=\bigvee\limits_{\lambda\in\Lambda }\Psi\Big({\mathcal{D}(Q_{\lambda})}\Big),
	\]
	thereby showing that $\Psi$ preserves joins. So, $\Psi$ is a frame homomorphism. Clearly, $\Psi$ is dense, and thus  one-to-one since $\mathfrak{O}(\ma(\mathcal{R}_cL))$ and $\beta_0L$ are zero-dimensional. If $I\in\beta_0L$, then, according to Part (4) of Lemma \ref{q},  $\Psi(\mathcal{D}({\bf O}_c^I))=I$. Consequently, $\Psi$ is onto, establishing it as an isomorphism.
\end{proof}

If $L$ is a compact frame, it can be observed that $R_\alpha$ is finite for any $\alpha\in\mathcal{R}L$ (for more detail, refer to \cite{ese, tes} ). Utilizing this observation, we determine that $\mathcal{R}(\beta_0 L)=\mathcal{R}_c(\beta_0 L)$ when $L$ is a compact frame. By combining Theorem \ref{m} and Theorem \ref{ff} with Theorem \ref{fm}, we can conclude that the space of maximal ideals of $\mathcal{R}_cL$ is homeomorphic to the space of maximal ideals of $\mathcal{R}_c(\beta_0L)$. From these observations, we deduce the following corollary based on the above theorem.
\begin  {corol}\label{bb} For any  zero-dimensional frame $L$,
	we have the following equivalences.
	\[
	\beta_0L\cong\mathfrak{O}(\ma(\mathcal{R}_cL))\cong\mathfrak{O}(\ma(\mathcal{R}_c(\beta_0L)))\cong\mathfrak{O}(\ma(\mathcal{R}(\beta_0L))).
	\] 
\end  {corol}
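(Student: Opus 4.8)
The plan is to obtain every link in the chain from Theorem \ref{b}, used twice, together with two routine observations; there is no new construction needed beyond what is already in the paper. The leftmost isomorphism $\beta_0L\cong\mathfrak{O}(\ma(\mathcal{R}_cL))$ is simply Theorem \ref{b} for the zero-dimensional frame $L$, so nothing is left to do there. All the work is to show that replacing $\mathcal{R}_cL$ first by $\mathcal{R}_c(\beta_0L)$ and then by $\mathcal{R}(\beta_0L)$ leaves the frame $\mathfrak{O}(\ma(-))$ unchanged up to isomorphism.

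For the rightmost isomorphism I would argue that the two rings actually coincide. Since $\beta_0L$ is a compact frame (recorded in the preliminaries), the observation stated just before the corollary gives that $R_\alpha$ is finite, hence countable, for every $\alpha\in\mathcal{R}(\beta_0L)$; thus $\mathcal{R}(\beta_0L)\subseteq\mathcal{R}_c(\beta_0L)$, the reverse inclusion is trivial, and so $\mathcal{R}(\beta_0L)=\mathcal{R}_c(\beta_0L)$. Equal rings have identical maximal-ideal spaces with the Hull-Kernel topology, whence $\mathfrak{O}(\ma(\mathcal{R}_c(\beta_0L)))=\mathfrak{O}(\ma(\mathcal{R}(\beta_0L)))$. (Alternatively, one could invoke the well-known $\mathfrak{O}(\ma(\mathcal{R}M))\cong\beta M$ with $M=\beta_0L$ compact regular.)

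For the middle isomorphism I would apply Theorem \ref{b} a second time, now to the frame $\beta_0L$, which is legitimate since $\beta_0L$ is zero-dimensional; this yields $\mathfrak{O}(\ma(\mathcal{R}_c(\beta_0L)))\cong\beta_0(\beta_0L)$. It then remains to identify $\beta_0(\beta_0L)$ with $\beta_0L$: because $\beta_0L$ is compact and zero-dimensional, the join map $j_0:\beta_0(\beta_0L)\to\beta_0L$ is a compactification of a frame that is already compact and regular, and a compactification of a compact regular frame is an isomorphism (equivalently, compact zero-dimensional frames are exactly the frames of ideals of Boolean algebras, so $\beta_0$ fixes them). Chaining the three steps with this identification and with Theorem \ref{b} for $L$ produces the displayed string of isomorphisms.

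There is no deep obstacle here, since every link is powered by a result already proved; the one point I would be careful to state rather than leave implicit is the idempotency $\beta_0(\beta_0L)\cong\beta_0L$. An alternative route for the middle step that sidesteps this fact is to combine Theorem \ref{m} (maximal ideals of $\mathcal{R}_cL$ correspond to the points of $\beta_0L$), Theorem \ref{fm} (all maximal ideals of $\mathcal{R}_c(\beta_0L)$ are fixed, $\beta_0L$ being compact) and Theorem \ref{ff} (the fixed maximal ideals of $\mathcal{R}_c(\beta_0L)$ correspond to the points of $\beta_0L$) to get a bijection between $\ma(\mathcal{R}_cL)$ and $\ma(\mathcal{R}_c(\beta_0L))$; but then one must still check that this bijection matches up the Hull-Kernel basic open sets $\mathcal{D}(\alpha)$ on the two sides, and that compatibility is precisely the bookkeeping that Theorem \ref{b} already encapsulates, which is why I would prefer the route via two applications of Theorem \ref{b}.
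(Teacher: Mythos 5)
Your proposal is correct, and the leftmost and rightmost links coincide with the paper's: Theorem \ref{b} applied to $L$, and the identity $\mathcal{R}(\beta_0L)=\mathcal{R}_c(\beta_0L)$ coming from compactness of $\beta_0L$ (the paper phrases the latter as an observation about compact frames; your application of it to the always-compact frame $\beta_0L$ is the intended and correct reading). Where you diverge is the middle isomorphism: the paper takes exactly the ``alternative route'' you describe and set aside, namely combining Theorem \ref{m}, Theorem \ref{ff} and Theorem \ref{fm} to identify the maximal ideals of $\mathcal{R}_cL$ and of $\mathcal{R}_c(\beta_0L)$ both with $Pt(\beta_0L)$ and declaring the structure spaces homeomorphic, leaving the compatibility of the hull-kernel topologies implicit. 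Your route --- a second application of Theorem \ref{b} to the zero-dimensional frame $\beta_0L$ followed by the idempotency $\beta_0(\beta_0L)\cong\beta_0L$ --- avoids that bookkeeping entirely, at the cost of one fact the paper never states; but that fact is standard and easy to check directly: the join map $Id\bigl(B(\beta_0L)\bigr)\to\beta_0L$ is onto and dense by zero-dimensionality, and injective because if $a\in B(\beta_0L)$ satisfies $a\leq\bigvee J$ for an ideal $J$ of $B(\beta_0L)$, then compactness applied to $1=a^*\vee\bigvee J$ gives $b\in J$ with $a\leq b$, so ideals are determined by their joins. So both arguments are sound; yours buys a cleaner treatment of the topology at the price of an extra (routine) lemma, while the paper's leans entirely on results already proved but is slightly less complete on the homeomorphism claim.
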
			
\begin{acknowledgement} 
	Thanks are due to Professor Ali Akbre Estaji for reading the paper and giving comments.
	\end{acknowledgement}


\begin{thebibliography}{9}
	\bibitem{a}
    M. Abedi,
	\textit{ On $CP$-frames and the Artin-Rees property},  Journal of Algebra and Related Topics, DOI:  10.22124/JART.2023.23811.1501.
	
	\bibitem{ae}
    M. Abedi and A. A. Estaji,
	\emph{  On $c$-completely regular frames},
	Journal of Frames and Matrix Theory,
	\textbf{1} (2023), 23-33.
	
	\bibitem {g}
	S. K. Acharyya, G. Bhunat, and P. P. Ghosh,
	\textit{  Pseudocompact frames $L$ versus different topologies on $\mathcal{R}L$},
	Quaest. Math.,  \textbf{38} (2015), 423-430.
	
	\bibitem{a2000}
	F. Azarpanah, O. A. S. Karamzadeh, and A. R. Aliabad,
	\textit{  On ideals consisting entirely of zero-divisors}, 
	Comm. Algebra, \textbf{28} (2000), 1061-1073.
	
	\bibitem{akko}
	F. Azarpanah, O. A. S. Karamzadeh, Z. Keshthar, and A. R. Olfati,
	\textit{  On maximal ideals  of $C_c(X))$ and the uniformity of its localizations}, 
	Rocky Mountain J. Math., \textbf{48} (2018), 345-384.
	
	\bibitem{ball2002}
	R. N. Ball and J. Walters-Wayland,
	\textit{  $C$- and $C^*$-quotients in pointfree topology}, 
	Dissertationes Math. (Rozprawy Mat.),
	{\bf412} (2002), 1-62.
	
	\bibitem{ba}
	B. Banaschewski,
	\textit{   compactification of frames},
	Math. Nachr, {\bf149} (1990), 105-116.
	
	\bibitem{bana111}
	B. Banaschewski, 
	\textit{Universal zero-dimensional compactifications}, In: categorical topology and its Relation to Analysis, Algebra, and combinatorics, Prague, 1988, pp. 257-269. World Scientific, Teaneck (1989).
	
	\bibitem{bana}
	B. Banaschewski,
	\textit{  The real numbers in pointfree topology}, 
	Textos de Matem\'atica, S\'erie B, Vol. 12. Departamento de Matem\'atica da Universidade de Coimbra, Coimbra 1997.
	
	\bibitem{bc}
	B. Banaschewski and C. Gilmour,
	\textit{  Stone-\v{C}ech compactification  and dimension theory for regular $\sigma$-frame},
	J. London Math., {\bf39} (1989), 1-8.
		
	\bibitem{bb}
	B. Banaschewski and G. C. L. Brummer,
	\textit{Functorial uniformities on strongly zero-dimensional frames},
	Kyungpook Math. J.,  \textbf{41} (2002), 179-190.
	
	\bibitem{du0}
	T. Dube, 
	\textit{  Some ring-theoretic properties of almost $P$-frames},
	Algebra Univers., \textbf{ 60} (2009), 145-162.
	
	\bibitem{du}
		T. Dube,
	\textit{  Concerning P-frames, essential P-frames and strongly zero-dimensional frames},
	Algebra Univers.,
	\textbf{69} (2009), 115-138.
	
	\bibitem{du1}
		T. Dube,  
	A broader view of the almost Lindel\"of property,
	\textit{  Algebra Univers.}, \textbf{65} (2011),  263-276.
	
	\bibitem{du3}
	T. Dube,  
	\textit{  On the ideal of functions with compact support in pointfree function rings},
	Acta Math. Hungar., \textbf{ 129} (2010), 205-226.
	
	\bibitem{du2}
		T. Dube, 
	\textit{  Extending and contracting maximal ideals in the function rings of pointfree topology},
	Bull. Math. Soc. Sci. Math. Roumanie, \textbf{55} (2012), 365-374.
	
	\bibitem{eks}
	A. A. Estaji, A. Karimi Feizabadi, and M. Robat Sarpoushi, 
	\textit{  $z_c$-Ideals and prime Ideals in the Ring $\mathcal{R}_cL$}, 
	Filomat,  \textbf{32} (2018), 6741-6752.
	
	\bibitem{ese}
	A. A. Estaji,  M. Robat Sarpoushi, and M. Elyasi, 
	\textit{  Further thoughts on the ring  $\mathcal{R}_cL$ in frames},
	Algebra Univers.,  \textbf{80} (2019), https://doi.org/10.1007/s00012-019-0619-z.
	
	\bibitem{et1}
	A. A. Estaji and M. Taha, 
	\textit{The clean elements of the ring  $\mathcal{R}L$},	Czecchoslovak Math. J., \textbf{74} (2024), 211-230
	
	\bibitem{et}
	A. A. Estaji and M. Taha,
	\textit{  Cozero part of the pointfree version of $C_c(X)$} (submitted).
	
	\bibitem{gkn}
	 M. Ghadermarzi, O. A. S. Karamzadeh, and M. Namdari,
	\textit{  On the functionally countable subalgebra of $C(X)$}, 
	Rend. Sem. Mat. Univ. Padova, \textbf{129} (2013), 47-69. 
	
	\bibitem{gkn1}
	 M. Ghadermarzi, O. A. S. Karamzadeh, and M. Namdari,
	\textit{  $C(X)$ versus its functionally countable subalgebra of }, 
	Bull. Iranian Math. Soc., \textbf{129} (2013), 47-69.
	
	\bibitem{gl}
	L. Gillman and M. Jerison,
	\textit{  Rings of continuous functions},
	Springer, Berlin, 1976.    
	
	\bibitem{j}
    P. T. Johnstone,
	\textit{  Stone Spaces}, 
	Cambridge University Press, 1982.
	
	\bibitem{kes}
	A. Karimi Feizabadi, A. A. Estaji,  and M. Robat Sarpoushi,
	\textit{  Pointfree version of image of real-valued continuous functions},
	Categ. Gen. Algebr. Struct. Appl., \textbf{ 9} (2018), 59-75.
	
\bibitem{koh}
     C. W. Kohls, \textit{Ideals in rings of continuous functions}, Fund. Math. 	\textbf{ 45} (1957), 28-50.
	
	\bibitem{pp2012}
	J. Picado and A. Pultr,
	\textit{  Frames and Locales: topology without points},
	Front. Math., Springer, Basel, 2012.
	
	\bibitem{tes}
	M. Taha, A. A. Estaji,  and M. Robat Sarpoushi,
	\textit{  The pointfree version of $C_c(X)$ via the ranges of functions},  Math. Slovaca, \textbf{ 74} (2024), 27-40.
	
	\bibitem{mason} 
	 G. Mason,
	\textit{  $z$-Ideals and prime ideals},
	J. Algebra, \textbf{26} (1973), 280-297.			
			
		\end{thebibliography}
	\end{document}